    \let\usingAmsArtXII\usepackage  
  \def \useHugeSize {}
  \def \numberingIsThrough {}
    \def\atSign{@@}
    \def\mathbb{\Bbb}
    \def\mathfrak{\frak}
    \def\mathbf{\bold}
      \def\boldsymbol#1{{\bold #1}}
    \def\mathbit{\boldsymbol}
    \newenvironment{proof}{%
         \@ifnextchar[{%
                       \expandafter\let\expandafter\end@proof
                         \csname endpf*\endcsname
                         \my@proof
                      }{\let\end@proof\endpf\pf}%
        }{\end@proof}
    \def\my@proof[#1]{\@nameuse{pf*}{#1}}
    \def\xrightarrow[#1]#2{@>{#2}>{#1}>}
    \def\xleftarrow[#1]#2{@<{#2}<{#1}<}
    \def\providecommand#1{\def#1}
    \def\emph#1{{\em #1}}
    \def\textbf#1{{\bf #1}}
    \def\mathring{\overset{\,\,{}_\circ}}
    \let\usingAmsArtXII\usepackage
        \DeclareMathAccent{\mathring}{\mathalpha}{operators}{"17}
      \long\def\FAKEendPROOF{\endtrivlist}
      \def\endproof{\qed\endtrivlist}
        \DeclareMathAlphabet{\mathbit}{OML}{cmm}{b}{it}
      \def\atSign{@}
      \def\Sb#1\endSb{_{\substack{#1}}}
      \def\Sp#1\endSp{^{\substack{#1}}}
                \def\mathcal{\cal}
                \def\pcyr{%
                        \def\default@family{UWCyr}%
                        \let\oldSl@\sl
                        \def\sl{\def\default@shape{it}\oldSl@}%
                        \cyracc
                        \language\Russian\family{UWCyr}\selectfont
                }
                \DeclareFontFamily{OT2}{cmr}{\hyphenchar\font45 }
                \DeclareFontShape{OT2}{cmr}{m}{n}{%
                     <5><6><7><8><9><10>gen*wncyr %
                     <10.95><12><14.4><17.28><20.74><24.88> wncyr10 %
                }{}
                \DeclareFontShape{OT2}{cmr}{m}{it}{%
                     <5><6><7><8><9><10> gen * wncyi%
                     <10.95><12><14.4><17.28><20.74><24.88> wncyi10%
                }{}
                \DeclareFontShape{OT2}{cmr}{bx}{n}{%
                     <5><6><7><8><9><10> gen * wncyb%
                     <10.95><12><14.4><17.28><20.74><24.88> wncyb10%
                }{}
                \DeclareFontShape{OT2}{cmr}{m}{sl}{%
                     <-> ssub * cmr/m/it%
                }{}
                \DeclareFontShape{OT2}{cmr}{m}{sc}{%
                     <5><6><7><8><9><10>%
                     <10.95><12><14.4><17.28><20.74><24.88> wncysc10%
                }{}
                \DeclareFontFamily{OT2}{cmss}{\hyphenchar\font45 }
                \DeclareFontShape{OT2}{cmss}{m}{n}{%
                     <8><9><10> gen * wncyss%
                     <10.95><12><14.4><17.28><20.74><24.88> wncyss10%
                }{}
                \def\cyrencodingdefault{OT2}
                \def\pcyr{%
                        \cyracc
                        \let\encodingdefault\cyrencodingdefault
                        \language\Russian\fontencoding{OT2}\selectfont
                }
        \def\theorembodyfont#1{\relax}
          \let\@@th@plain\th@plain
          \def\th@plain{ \@@th@plain \slshape }
        \let\normalshape\relax
     \def\cprime{$'$}
  \def\@sect@my#1#2#3#4#5#6[#7]#8{%
\ifnum #2>\c@secnumdepth
   \let\@svsec\@empty
 \else
   \refstepcounter{#1}%
\edef\@svsec{\ifnum#2<\@m
             \@ifundefined{#1name}{}{\csname #1name\endcsname\ }\fi
\noexpand\rom{\csname the#1\endcsname.}\enspace}\fi
 \@tempskipa #5\relax
 \ifdim \@tempskipa>\z@ 
   \begingroup #6\relax
   \@hangfrom{\hskip #3\relax\@svsec}{\interlinepenalty\@M #8\par}%
   \endgroup
   \if@article\else\csname #1mark\endcsname{%
        \ifnum \c@secnumdepth >#2\relax\csname the#1\endcsname. \fi#7}\fi
\ifnum#2>\@m \else
       \let\@tempf\\ \def\\{\protect\\}\addcontentsline{toc}{#1}%
{\ifnum #2>\c@secnumdepth \else
             \protect\numberline{%
               \ifnum#2<\@m
               \@ifundefined{#1name}{}{\csname #1name\endcsname\ }\fi
               \csname the#1\endcsname.}\fi
           #8}\let\\\@tempf
     \fi
 \else
  \def\@svsechd{#6\hskip #3\@svsec
    \@ifnotempty{#8}{\ignorespaces#8\unskip
       \ifnum\spacefactor<1001.\fi}%
        \ifnum#2>\@m \else
          \let\@tempf\\ \def\\{\protect\\}\addcontentsline{toc}{#1}%
            {\ifnum #2>\c@secnumdepth \else
              \protect\numberline{%
                \ifnum#2<\@m
                \@ifundefined{#1name}{}{\csname #1name\endcsname\ }\fi
                \csname the#1\endcsname.}\fi
             #8}\let\\\@tempf\fi}%
 \fi
\@xsect{#5}}
  \let\@sect\@sect@my             
  \def\th@remark@my{\theorempreskipamount6\p@\@plus6\p@
    \theorempostskipamount\theorempreskipamount
    \def\theorem@headerfont{\it}\normalshape}
    \let\th@remark\th@remark@my
    \let\o@@remark\th@remark
      \def\th@remark{\o@@remark
    \ifdim\theorempostskipamount < 2pt\relax
      \theorempostskipamount\theorempreskipamount
         \multiply\theorempostskipamount\tw@
         \divide\theorempostskipamount\thr@@
    \fi
      }
\let\myLabel\@gobble
\def\labelsONmargin{\@mparswitchfalse\def\myLabel##1{\@bsphack\marginpar
                                  {\normalshape\tiny\rm Label ##1}\@esphack}}
  \def\url#1{{\tt #1}}%
\def\cyracc{\def\u##1{
                \if \i##1\char"1A%
                \else \if I##1\char"12%
                \else \accent"24 ##1\fi\fi }%
\def\"##1{\if e##1{\char"1B}%
                \else \if E##1{\char"13}%
                \else \accent"7F ##1\fi\fi }%
\def\9##1{\if##1z\char"19
\else\if##1Z\char"11
\else\if##1E\char"03
\else\if##1e\char"0B
\else\if##1u\char"18
\else\if##1U\char"10
\else\if##1A\char"17
\else\if##1a\char"1F
\else\if##1p\char"7E
\else\if##1P\char"5E
\else\if##1Q\char"5F
\else\if##1q\char"7F
\else\if##1i\char"1A
\else\if##1I\char"12
\else\if##1N\char"7D
\fi
\fi
\fi
\fi
\fi
\fi
\fi
\fi
\fi
\fi
\fi
\fi
\fi
\fi
\fi
}%
\def\cydot{{\kern0pt}}}%
\def\cydot{$\cdot$}
        \def\Russian{0\relax
    \message{Don't know the hyphenation rules for Russian^^J
                        Please do INITeX with `input  russhyph' in the
                        command line}%
                \gdef\Russian{0\relax}%
        }
  \def\@putname#1#2#3#4{\def\@@ref{#3}\let\old@bf\bf
        \def\bf##1{\old@bf\if?\noexpand##1?{#4}\else##1\fi}%
    #1{#2}%
        \let\bf\old@bf}
  \def\@putname#1#2#3#4{\def\@@ref{#3}\let\old@bf\bf    
    \let\old@reset@font\reset@font          
        \def\bf##1{\old@bf\if?\noexpand##1?{#4}\else##1\fi}%
    \def\reset@font##1##2{\old@reset@font##1\if?\noexpand##2?{#4}\else##2\fi}#1{#2}%
        \let\bf\old@bf\let\reset@font\old@reset@font}
\let\my@ref=\ref
\def\ref#1{\@putname\my@ref{#1}{#1}{\tiny\rm\@@ref}}
\let\my@pageref=\pageref
\def\pageref#1{\@putname\my@pageref{#1}{#1}{\tiny\rm\@@ref}}
\let\my@cite=\cite
\def\cite#1{\@putname\my@cite{#1}{\@citeb}{\tiny\rm\@@ref}}
\theoremstyle{plain} 
\numberwithin{equation}{section}
\theoremstyle{definition}
\newtheorem{definition}{Definition}[section]
\newtheorem{definition}{Definition}
\newtheorem{conjecture}[definition]{Conjecture}
\newtheorem{example}[definition]{Example}
\theoremstyle{remark}
\newtheorem{remark}[definition]{Remark} 
\newtheorem{note}{Note}[section] 
\newtheorem{summary}{Summary}[section] 
\theoremstyle{plain} 
\newtheorem{theorem}[definition]{Theorem}
\newtheorem{lemma}[definition]{Lemma}
\newtheorem{corollary}[definition]{Corollary}
\newtheorem{proposition}[definition]{Proposition}
\begin{document}
\bibliographystyle{amsplain}

\ifx\useHugeSize\undefined
\else
\Huge
\fi

\relax
\renewcommand{\v}{\varepsilon} \newcommand{\p}{\rho}
\newcommand{\m}{\mu}
\def\im{{\bf im}}
\def\ker{{\bf ker}}
\def\Pic{{\bf Pic}}
\def\re{{\bf re}}
\def\e{{\bf e}}
\def\a{\alpha}
\def\ve{\varepsilon}
\def\b{\beta}
\def\D{\Delta}
\def\d{\delta}
\def\f{{\varphi}}
\def\ga{{\gamma}}
\def\L{\Lambda}
\def\lo{{\bf l}}
\def\s{{\bf s}}
\def\A{{\bf A}}
\def\B{{\bf B}}
\def\cB{{\mathcal {B}}}
\def\C{{\mathbb C}}
\def\F{{\bf F}}
\def\G{{\mathfrak {G}}}
\def\g{{\mathfrak {g}}}
\def\b{{\mathfrak {b}}}
\def\q{{\mathfrak {q}}}
\def\f{{\mathfrak {f}}}
\def\k{{\mathfrak {k}}}
\def\l{{\mathfrak {l}}}
\def\m{{\mathfrak {m}}}
\def\n{{\mathfrak {n}}}
\def\o{{\mathfrak {o}}}
\def\p{{\mathfrak {p}}}
\def\s{{\mathfrak {s}}}
\def\t{{\mathfrak {t}}}
\def\r{{\mathfrak {r}}}
\def\z{{\mathfrak {z}}}
\def\h{{\mathfrak {h}}}
\def\H{{\mathcal {H}}}
\def\O{\Omega}
\def\M{{\mathcal {M}}}
\def\T{{\mathcal {T}}}
\def\N{{\mathcal {N}}}
\def\U{{\mathcal {U}}}
\def\Z{{\mathbb Z}}
\def\P{{\mathcal {P}}}
\def\GVM{ GVM }
\def\iff{ if and only if  }
\def\add{{\rm add}}
\def\ld{\ldots}
\def\vd{\vdots}
\def\sl{{\rm sl}}
\def\mod{{\rm mod}}
\def\len{{\rm len}}
\def\cd{\cdot}
\def\dd{\ddots}
\def\q{\quad}
\def\qq{\qquad}
\def\ol{\overline}
\def\tl{\tilde}
\def\nn{\nonumber}

\title{On Kostant's theorem for the Lie superalgebra $Q(n)$}

\author{ Elena Poletaeva and Vera Serganova }


\address{ Dept. of Mathematics, University of Texas-Pan American,
Edinburg, TX 78539 } \email{elenap\atSign{}utpa.edu}

\address{ Dept. of Mathematics, University of California at Berkeley,
Berkeley, CA 94720 } \email{serganov\atSign{}math.berkeley.edu}

\maketitle

\section{Introduction}
A finite $W$-algebra is a certain associative algebra attached to a
pair $(\g,e)$ where $\g$ is a complex semisimple Lie algebra and $e\in\g$
is a nilpotent element. Geometrically a finite $W$ algebra is a
quantization of the Poisson structure on the so-called Slodowy slice
(a transversal slice to the orbit of $e$ in the adjoint
representation). In the case when $e=0$ the finite $W$-algebra
coincides with the universal enveloping algebra $U(\g)$ and in the
case when $e$ is a regular nilpotent element, the corresponding
$W$-algebra coincides with the center of $U(\g)$. The latter case was
studied by B. Kostant \cite{Ko} 
who was motivated by applications to generalized Toda lattices. The
general definition of a finite $W$-algebra was given by 
A. Premet in \cite{Pr1}. I. Losev used the machinery of Fedosov
quantization to prove important results relating 
representations of $W$-algebras and primitive ideals of
$U(\g)$ \cite{L1, L2, L3} (see also \cite{Pr2, Pr3, Pr4}). 
He used this result to prove long standing conjectures of A. Joseph
and others concerning primitive ideals in $U(\g)$, \cite{J}.

On the other hand, affine $W$-algebras were first constructed by physicists \cite{FRTW1, FRTW2}.
The role of the Slodowy slice in $W$-algebras in the principal case was recognized in \cite{BFR}.
A. De Sole and V.G. Kac in \cite{DK} established the relation between affine and finite $W$-algebras.

Let us mention an important discovery of physicists, \cite{RS}, that
for $\g=\s\l(n)$ finite $W$-algebras are closely related 
to Yangians. This connection was
further studied in \cite{B} and \cite{BK}.

It is interesting to generalize all above applications to Lie superalgebras.
Finite $W$-algebras for Lie superalgebras
have been extensively studied by
 C. Briot, E. Ragoucy, J. Brundan, J. Brown, S. Goodwin,
 W. Wang, L. Zhao and other mathematicians and physicists \cite{BR, BBG, W, Z}.
Analogues of finite $W$-algebras for Lie superalgebras in terms of BRST cohomology
 were defined in by A. De Sole and V.G. Kac in \cite{DK}.

In \cite{BR} C. Briot and E. Ragoucy observed
that finite $W$-algebras associated with certain nilpotent orbits in
$\g\l(pm|pn)$ can be realized as truncations of 
the super-Yangian of $\g\l(m|n)$, see \cite{M} for definition. 

The principal finite $W$-algebras for $\g\l(m|n)$  associated to regular (principal)
 nilpotent elements were described as certain truncations of a shifted version of the super-Yangian
$ Y(\g\l(1|1))$  in \cite{BBG}. It is also proven there that all irreducible modules over principal
finite $W$-algebras  are  finite-dimensional for
 $\g\l(m|n)$. Furthermore,  \cite{BBG} contains a classification of
 irreducible modules using highest weight theory.

 In \cite{Z} L. Zhao generalized certain results about finite
 $W$-algebras to the case of Lie superalgebras. In particular he
has proved that the definition of a finite $W$-algebra
does not depend on a choice of an isotropic subspace $\l$ and a good $\Z$-grading.
He has also proved an analogue of the Skryabin theorem establishing equivalence between the category
of modules over a finite $W$-algebra and the 
category of generalized Whittaker $\g$-modules. He also gave a
definition of a finite $W$-algebra for the queer Lie superalgebra $Q(n)$ .

In \cite{PS1, P} we described the finite $W$-algebras in the regular
case for some classical and exceptional Lie superalgebras of 
defect one.

In this paper we are interested in the finite $W$-algebra associated
with a regular nilpotent element
$\chi\in \g_{\bar 0}^*$ for  a Lie superalgebra $\g$ with
reductive even part $\g_{\bar 0}$. (Since not all such superalgebras
admit an even invariant form, 
we can not identify $\g$ with $\g^*$, and we use the notation $W_\chi$ instead of $W_e$.)
We prove that for basic classical $\g$ or $Q(n)$ and the regular
$\chi$ the algebra $W_\chi$ satisfies the Amitsur-Levitzki identity (\cite{AL})
(Corrolary \ref{corAL}). In the proof we use some sort of reduction by
constructing an injective homomorphism $\vartheta:W_\chi\to \bar W_{\chi}^\s$, where
$\s$ is the reductive part of some parabolic subalgebra  $\p\subset\g$,
and $\bar W_\chi^\s$ is an analogue of $W_\chi$ for $\s$. 
As a corollary we obtain that all irreducible representations of
$W_{\chi}$ are 
finite-dimensional (Proposition \ref{irred}).

We study in detail the case when $\g = Q(n)$ and $\chi$ is regular.
In this case, $\p$ is a Borel subalgebra and $\s$ is a Cartan subalgebra.
We obtain results about the image of $\vartheta$ in this case, which imply, in particular, that the center of
$W_{\chi}$ coincides with the center of $U(Q(n))$ (Corollary \ref{cent2}).

Using Sergeev's construction of certain elements in the universal enveloping algebra $U(Q(n))$ ([24]),
we construct generators of $W_{\chi}$. Using these generators, we prove that the associated
graded algebra $Gr_K W_{\chi}$ with respect to the Kazhdan filtration is isomorphic to $S(\g^{\chi})$
(the symmetric algebra of the annihilator $\g^{\chi}$ of $\chi$ in $\g$)
(Conjecture \ref{generalconjecture} and Corollary \ref{cor}).
Furthermore, we prove that $W_{\chi}$ is isomorphic to a quotient of the super-Yangian of $Q(1)$
defined by M. Nazarov and A. Sergeev (\cite{N, NS}) (Theorem \ref{Yan}).
 Finally, we construct $n$ even and $n$ odd generators in
$W_{\chi}$, such that all even generators commute and generate the polynomial subalgebra of rank $n$ in
$W_{\chi}$, and the commutators of odd generators lie in the center of $W_{\chi}$ (Theorem \ref{Gen}).

{\bf Acknowledgments.} The authors would like to thank J. Brundan, M. Nazarov and A. Sergeev
for helpful discussions. A significant part of this work was done at the Max-Planck-Institut fur Mathematik in Bonn
in the Fall of 2012. We thank the MPIM for the hospitality and support. The second author was also supported by NSF grant
DMS - 1303301.

\section{Finite $W$-algebras for Lie superalgebras}

\subsection{Definitions}
Let $\g = \g_{\bar 0}\oplus \g_{\bar 1}$ be a Lie superalgebra with reductive even part $\g_{\bar 0}$.
Let $\chi\in \g_{\bar 0}^*\subset\g^*$ be an even nilpotent element
in the coadjoint representation.
\footnote{ Denote by $G_{\bar 0}$ the algebraic reductive group of $\g_{\bar 0}$. Then $\chi$ is nilpotent if the closure of
$G_{\bar 0}$-orbit in $\g_{\bar 0}^*$ contains zero.}
By $\g^\chi$ we denote the annihilator of $\chi$ in $\g$. By definition
$$\g^\chi=\{x\in\g \hbox{ }|\hbox{ }\chi([x,\g])=0\}.$$
A good $\mathbb Z$-grading for $\chi$ is a  $\mathbb Z$-grading
$\displaystyle\g=\bigoplus_{j\in\Z} \g_j $ satisfying the following two conditions
\begin{enumerate}
\item $\chi(\g_j)=0$ if $j\neq -2$;
\item $\g^\chi$  belongs to $\displaystyle\bigoplus_{j\geq 0} \g_j$.
\end{enumerate}

Note that $\chi([\cdot,\cdot]):\g_{-1}\times\g_{-1}\to\mathbb C$ is a non-degenerate skew-symmetric even bilinear form on
$\g_{-1}$. Let $\l$ be a maximal isotropic subspace with respect to
this form. We consider
a nilpotent subalgebra
$\m = (\displaystyle\bigoplus_{j\leq -2}\g_j)\oplus \l$ of $\g$.
The restriction of $\chi$ to $\m$
$$\chi: \m\longrightarrow \C$$
defines a one-dimensional representation
$C_{\chi} = <v>$ of $\m$.

\begin{definition} The induced $\g$-module
$$Q_{\chi} := U(\g)\otimes_{U(\m)}C_{\chi} \cong U(\g)/I_{\chi},$$
where $I_{\chi}$ is the left ideal of $U(\g)$
generated by $a - \chi(a)$ for all $a\in \m$,
is called {\it the generalized Whittaker module}.
\end{definition}

\begin{definition} \hbox{ }\cite{Pr1}.
Define {\it the finite $W$-algebra} associated to the nilpotent element
$\chi$ to be
$$W_{\chi} := \hbox{End}_{U(\g)}(Q_{\chi})^{op}.$$
\end{definition}

As in the Lie algebra case, the superalgebras $W_\chi$ are all
isomorphic for different choices of good gradings and maximal
isotropic subspaces $\l$ \cite{Z}.

If $\g$ admits an even non-degenerate invariant supersymmetric bilinear form, then $\g\simeq \g^*$
and $\chi(x)=(e|x)$ for some nilpotent $e\in \g_{\bar 0}$.
By the Jacobson--Morozov theorem  $e$ can be included in $\s\l(2) = <e, h, f>$.
As in the Lie algebra case, the linear operator $\hbox{ad} h$ defines a Dynkin $\Z$-grading
$\displaystyle \g = \bigoplus_{j\in \Z}\g_j$, where
$$\g_j = \lbrace x\in \g \hbox{ }|\hbox{ }\hbox{ad} h(x) = jx\rbrace.$$
As follows from representation theory of $\s\l(2)$, the Dynkin $\Z$-grading
is good.
Let $\g^e := \hbox{Ker}(\hbox{ad} e)$. Note that as in the Lie algebra case,
$\dim \g^e = \dim \g_0 + \dim \g_1$ and
$\displaystyle \g^e \subseteq \bigoplus_{j\geq 0}\g_j$.

Most results of this paper concern the case when $\g$ admits an odd
non-degenerate invariant supersymmetric bilinear form. In this case
$\g\simeq \Pi\g^*$ and $\chi(x)=(E|x)$ for some nilpotent $E\in \g_{\bar 1}$.
Among classical Lie superalgebras only $Q(n)$  or $PSQ(n)$ admit
an odd non-degenerate invariant supersymmetric bilinear form. We will
see that in this case there is an analogue of the Dynkin $\Z$-grading.

Note that by Frobenius reciprocity
$$\hbox{End}_{U(\g)}(Q_{\chi})= \hbox{Hom}_{U(\m)}(C_{\chi},Q_{\chi}).$$
That defines an identification of $W_\chi$ with the subspace
$$Q_\chi^\m=\{u\in Q_{\chi}\hbox{ }| \hbox{ }au=\chi(a)u\hbox{ for all } a\in \m\}.$$
In what follows we denote by  $\pi:U(\g)\to U(\g)/I_\chi$  the natural
projection. By above
\begin{equation}\label{favorite}
W_{\chi} = \lbrace \pi(y) \in U(\g)/I_{\chi} \hbox{ }| \hbox{ } (a - \chi(a))y\in I_{\chi}\hbox{ for all } a\in \m\rbrace,
\end{equation}
or, equivalently,
\begin{equation}\label{favorite1}
W_{\chi} = \lbrace \pi(y) \in U(\g)/I_{\chi} \hbox{ }| \hbox{ ad} (a)y\in I_{\chi}\hbox{ for all } a\in \m\rbrace.
\end{equation}
The algebra structure on $W_{\chi}$ is given by
$$\pi(y_1)\pi({y}_2) = \pi({y_1y_2})$$
\vskip 0.1in
\noindent
for $y_i\in U(\g)$ such that $\hbox{ad}(a)y_i\in I_{\chi} \hbox{ for all }  a\in \m$
and $i = 1, 2$.

\begin{definition}
A $\Z$-grading $\displaystyle\g = \bigoplus_{j\in \Z}\g_j$ is called {\it even},
if $\g_j = 0$ unless $j$ is an even integer.
\end{definition}

The definition of $W_{\chi}$ for an even good $\Z$-grading is simpler, since
in this case
$\g_{-1} = 0$. Hence there is no complications of choice of a Lagrangian subspace $\l$
and $\displaystyle\m = \bigoplus_{j\geq 1}\g_{-2j}$.

Let $\displaystyle\p : = \bigoplus_{j\geq 0}\g_{2j}$. It follows directly from
definition that $\p$ is a parabolic subalgebra of $\g$.
From the PBW theorem,
$$U(\g) = U(\p) \oplus I_{\chi}.$$
The projection $pr: U(\g) \longrightarrow U(\p)$
along this direct sum decomposition induces an isomorphism:
$U(\g)/I_{\chi} \buildrel \sim \over \longrightarrow U(\p)$. Thus,
the algebra $W_{\chi}$ can be regarded as a {\it subalgebra} of $U(\p)$.

\subsection{Kazhdan filtration on $W_{\chi}$} Define the $\Z$-grading
on $T(\g)$ induced by the shift by $2$ of the fixed good $\Z$-grading. In
other words, we set the degree of $X\in \g_j$ to be $j+2$. It induces a
filtration on $U(\g)$ and therefore on $U(\g)/I_\chi$, which is called
the {\it Kazhdan filtration}. We
will denote  by $\hbox{Gr}_K$ the corresponding graded algebras. Recall that by (\ref{favorite})
$W_\chi\subset U(\g)/I_\chi$. Hence we have the induced filtration on $W_\chi$.
It is not hard to see that $Gr_KU(\g)$ is supercommutative and
therefore  $\hbox{Gr}_K W_\chi$ is also supercommutative.  For any $X\in W_\chi$ we denote by $\hbox{Gr}_K X$ the corresponding element in
$\hbox{Gr}_K W_\chi$.
The following result is very important.

\begin{theorem}\label{premet} A. Premet \cite{Pr1}. Let $\g$ be a semisimple Lie algebra.
Then the associated graded algebra $Gr_KW_{\chi}$ is isomorphic to $S(\g^\chi)$.
\end{theorem}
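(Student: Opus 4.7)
The plan is to work (without loss of generality) with an even good $\Z$-grading, so that $\g_{-1}=0$, $\m = \bigoplus_{j\le -2}\g_j$ and $\p = \bigoplus_{j\ge 0}\g_j$; the choice is harmless by invariance of $W_\chi$ under the choice of good grading. From the PBW decomposition $U(\g) = U(\p)\oplus I_\chi$, the projection identifies $U(\g)/I_\chi$ with $U(\p)$ as a vector space, the Kazhdan filtration on $U(\p)$ has associated graded $S(\p)$, and viewing $W_\chi\subset U(\p)$ accordingly produces a graded injection $\mathrm{Gr}_K W_\chi \hookrightarrow S(\p)$.

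The heart of the proof is to identify the image of this injection with $S(\g^\chi)\subset S(\p)$, where the inclusion uses the $\mathrm{ad}\,h$-stable decomposition $\p = \g^\chi \oplus (\p\cap [f,\g])$ coming from $\mathfrak{sl}_2$-representation theory applied to the Jacobson--Morozov triple $(e,h,f)$. For the containment $\mathrm{Gr}_K W_\chi \subseteq S(\g^\chi)$, I would pass the Whittaker condition $\mathrm{ad}(a)y \in I_\chi$ ($a\in\m$) to the associated graded level, where it becomes a twisted $\m$-invariance condition on $S(\p)$; the non-degenerate pairing $\m \times \g/\p \to \C$, $(a,x) \mapsto \chi([a,x])$, then forces the space of such invariants to be exactly $S(\g^\chi)$.

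For the reverse inclusion, for each homogeneous $x\in\g^\chi$ of $\mathrm{ad}\,h$-degree $j$ one must produce an element $\Theta(x)\in W_\chi$ whose Kazhdan symbol equals $x\in S(\p)$. I would construct $\Theta(x)$ recursively: start with $x\in U(\p)$ of Kazhdan degree $j+2$, then successively subtract terms of strictly lower Kazhdan degree to enforce the Whittaker condition. At each stage the obstruction to the condition lies in an $\m$-module where the pairing above provides an element realising it as a coboundary, and the process terminates because Kazhdan degrees are bounded below. The main obstacle is managing this lifting procedure uniformly in $x$; the slickest way to close the argument is via Premet's version of Skryabin's theorem, which realises $Q_\chi$ as a free $W_\chi$-module and thereby pins down $\mathrm{ch}\,\mathrm{Gr}_K W_\chi = \mathrm{ch}\,S(\g^\chi)$ by Hilbert-series comparison with $\mathrm{ch}\,\mathrm{Gr}_K Q_\chi = \mathrm{ch}\,S(\p) = \mathrm{ch}\,S(\g^\chi)\cdot \mathrm{ch}\,S(\p\cap[f,\g])$, upgrading the inclusion from the previous paragraph to the desired isomorphism $\mathrm{Gr}_K W_\chi \cong S(\g^\chi)$.
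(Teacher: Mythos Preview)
The paper does not give its own proof of this statement; it is quoted as Premet's theorem with a citation to \cite{Pr1}. What the paper does prove is the weaker Theorem~\ref{PremetQ} (one inclusion, in the super setting) and Proposition~\ref{towpr} (the reverse inclusion, conditional on the existence of lifts), and it explicitly says it is ``following the original Premet's proof.'' So the relevant comparison is between your sketch and that material.

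There is a genuine gap in your reduction step. You claim you may work ``without loss of generality'' with an even good $\Z$-grading. This is false for an arbitrary nilpotent $\chi$ in a semisimple Lie algebra: many nilpotent elements (already the minimal nilpotent in $\mathfrak{sl}_3$, or subregular elements in various types) admit no even good grading at all. The independence of $W_\chi$ from the choice of good grading does not rescue you, because the Kazhdan filtration itself is defined in terms of the grading, and in any case you cannot switch to a grading that does not exist. Premet's actual argument, and the paper's Theorem~\ref{PremetQ}, work with an arbitrary good grading and carry along the complement $\l'\subset\g_{-1}$ throughout; the key computation (Lemma~\ref{aux} and the derivation $\gamma_2$) is set up precisely to handle $\g_{-1}\neq 0$.

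Setting that aside, your strategy for the inclusion $\mathrm{Gr}_K W_\chi \subset S(\g^\chi)$ is essentially the same as the paper's: the ``twisted $\m$-invariance'' you describe is exactly the condition $\gamma_2(\m,\bar X)=0$ in the paper's notation, and your non-degenerate pairing argument is Lemma~\ref{aux}. For the reverse inclusion, your two options are both legitimate but different from what the paper records. The paper's Proposition~\ref{towpr} simply \emph{assumes} lifts $Y_i$ of a basis of $\g^\chi$ exist and deduces the isomorphism; in the Lie-algebra case Premet constructs such lifts directly. Your Hilbert-series argument via Skryabin/freeness is a known alternative (used e.g.\ by Gan--Ginzburg), but note that invoking Skryabin here is somewhat circular in spirit, since the standard proofs of freeness of $Q_\chi$ over $W_\chi$ already rely on knowing $\mathrm{Gr}_K W_\chi$.
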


We believe that the above theorem holds for basic classical Lie
superalgebras if  $\dim(\g_{-1})_{\bar 1}$ is {\it even}.
 In fact, for $\g=\mathfrak{gl}(m|n)$ and
regular $\chi$ it is proven in \cite{BBG}.
In this paper we prove the
analogous result for regular $\chi$ and $\g=Q(n)$ (see Corollary \ref{cor}).

We will prove now a weaker general result. Let $\l'$ be some subspace
in $\g_{-1}$ 
satisfying the following two properties
\begin{itemize}
\item $\g_{-1}=\l\oplus\l'$;
\item $\l'$ contains a maximal isotropic subspace with respect to the form $\chi([\cdot,\cdot])$ on $\g_{-1}$.
\end{itemize}

If $\hbox{dim}(\g_{-1})_{\bar 1}$ is even, then $\l'$ is a maximal
isotropic subspace. 
If $\hbox{dim}(\g_{-1})_{\bar 1}$ is
odd, then $\l^\perp\cap \l'$ is one-dimensional and we fix
$\theta\in \l^\perp\cap \l'$ such that 
$\chi([\theta,\theta])=2$. It is clear that $\pi(\theta)\in W_\chi$
and $\pi(\theta)^2=1$.

Let $\displaystyle\p=\bigoplus _{j\geq 0}\g_j$.
By the PBW theorem, $U(\g)/I_\chi\simeq S(\p\oplus\l')$ as a vector space. Therefore
$\hbox{Gr}_K(U(\g)/I_\chi)$ is isomorphic to $S(\p\oplus\l')$ as a vector space.
The good grading of $\g$ induces the grading on $S(\p\oplus\l')$.
For any $X\in S(\p\oplus\l')$ we denote by $\bar X$ the element of highest degree in this grading.
Following the original Premet's proof we will prove now the following statement.

\begin{theorem}\label{PremetQ}

(a) Assume that $\hbox{dim}(\g_{-1})_{\bar 1}$ is even. If $X\in\hbox{Gr}_KW_\chi$, then $\bar X\in S(\g^\chi)$.

\noindent
(b) Assume that $\hbox{dim}(\g_{-1})_{\bar 1}$ is odd. If
$X\in\hbox{Gr}_KW_\chi$, then
$\bar X\in S(\g^\chi\oplus\C\theta)$.

\end{theorem}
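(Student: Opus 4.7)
The plan is to mimic A. Premet's proof of Theorem \ref{premet}, tracking carefully the effect of the maximal isotropic (rather than Lagrangian) subspace $\l$ and the parity of $\hbox{dim}(\g_{-1})_{\bar 1}$.

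First I would realize $W_\chi$ as an invariant subspace and pass to the Kazhdan graded. The isotropy of $\l$ ensures $\chi([\m,\m])=0$, so $\chi$ extends to a character $U(\m)\to\C$; using the PBW decomposition $U(\g)\simeq U(\m)\otimes U(\p\oplus\l')$, the projection $\pi$ identifies $U(\g)/I_\chi$ with $U(\p\oplus\l')$ as a vector space, and by (\ref{favorite1}) $W_\chi$ corresponds to the subspace annihilated by the twisted operators $\rho(a):=\pi\circ\hbox{ad}(a)$, $a\in\m$. On the Kazhdan graded, the same identity $\chi([\m,\m])=0$ makes $\hbox{Gr}_K I_\chi$ a Poisson ideal of $S(\g)$, so the Poisson bracket descends to $S(\p\oplus\l')$ and the top Kazhdan-degree part of each $\rho(a)$ coincides with the Poisson derivation $\{a,\cdot\}$. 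Hence every $X\in\hbox{Gr}_K W_\chi$ satisfies $\{a,X\}=0$ in $S(\p\oplus\l')$ for all $a\in\m$.

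Second, I would extract $\bar X$ via the good grading. Decompose $X=\sum_n X^{(n)}$ by good grading (within the fixed Kazhdan degree $d$, the piece $X^{(n)}$ has polynomial degree $(d-n)/2$), so $\bar X=X^{(n_{\max})}$. Since $\{\g_j,\cdot\}$ shifts good grading by $j$ and $j<0$ for $a\in\m$, the relation $\{a,X\}=0$ splits by good bi-degree, and its top line forces $\bar X$ to be annihilated by the leading (top good-grading) component of $\{a,\cdot\}$ for every $a\in\m$. Modulo $\hbox{Gr}_K I_\chi$ this leading action is the natural derivation-action of $\m$ on $S(\p\oplus\l')$, in which any $\m$-component appearing in $[\m,\p\oplus\l']$ is replaced by its $\chi$-value.

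Third, and this is the main obstacle, I would identify this space of invariants as $S(\g^\chi)$ in case (a) and $S(\g^\chi\oplus\C\theta)$ in case (b). This reduces to a transversality statement at the linear level, namely that $\p\oplus\l'$ is the direct sum of $\g^\chi$ and the image of the leading $\hbox{ad}(\m)$-action, extracted from the non-degeneracy of $\chi([\cdot,\cdot])$ on $\g_{-1}$ together with the good-grading containment $\g^\chi\subset\bigoplus_{j\geq 0}\g_j$. When $\hbox{dim}(\g_{-1})_{\bar 1}$ is even, $\l$ and $\l'$ may be chosen as complementary Lagrangians that $\chi([\cdot,\cdot])$ pairs as duals, and the transversality is clean. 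When $\hbox{dim}(\g_{-1})_{\bar 1}$ is odd, the maximal isotropic $\l$ is short of Lagrangian by one odd dimension; the one-dimensional space $\l^\perp\cap\l'=\C\theta$ is not in the $\hbox{ad}(\m)$-image, so $\theta$ survives as an invariant and contributes the extra factor $\C\theta$ in (b). The technical heart of the proof is precisely this transversality/dimension count in the super setting.
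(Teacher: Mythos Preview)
Your approach is essentially the paper's: split the induced $\m$-action on $S(\p\oplus\l')$ by good-grading degree as $\gamma=\gamma_0+\gamma_2$, observe that the top piece $\gamma_2$ annihilates $\bar X$, identify $\gamma_2$ as the constant-coefficient derivation extending $z\mapsto\chi([Y,z])$, and then reduce to the linear statement that $\{z\in\p\oplus\l':\chi([\m,z])=0\}$ equals $\g^\chi$ (resp.\ $\g^\chi\oplus\C\theta$), which the paper records separately as Lemma~\ref{aux} and proves exactly via $\g^\chi\subset\p$ and the identification $\l^\perp\cap\l'=\C\theta$ you describe. One phrasing caveat: your ``transversality'' should be this kernel characterization, not a direct-sum decomposition of $\p\oplus\l'$ --- the leading action $\gamma_2$ sends linear elements to scalars, so its ``image'' does not sit inside $\p\oplus\l'$.
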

\begin{proof}
We start with the following simple observation.
\begin{lemma}\label{aux} Let $x\in\p\oplus\l'$. Then
$\chi([\m,x])=0$ if and only if  $x\in\g^\chi$ for even  $\hbox{dim}(\g_{-1})_{\bar 1}$ and $x\in\g^\chi\oplus\C\theta$ for odd  $\hbox{dim}(\g_{-1})_{\bar 1}$.
\end{lemma}
\begin{proof} Note that if $x\in\g_i$ and $Y\in\g_j$, then $\chi([Y,x])\neq 0$ implies $i+j=-2$. Therefore if $x\in\p$, the condition
$\chi([\m,x])=0$ implies the condition $\chi([\g,x])=0$, and thus $x\in \g^\chi$. If $x\in\l'$, then the condition $\chi([\m,x])=0$ is equivalent to the condition
$\chi([\l,x])=0$. Therefore $x\in \l^\perp\cap \l'=\mathbb C\theta$.
\end{proof}
Let $X\in Gr_KW_{\chi}$. Passing to the graded version of (\ref{favorite1}) we obtain that for any $Y\in\m$ we have
\begin{equation}\label{equP}
\pi(\hbox{ad}Y(X))=0.
\end{equation}
Define $\gamma:\m\otimes S(\p\oplus \l')\to S(\p\oplus \l')$ by putting
$$\gamma(Y,Z)=\pi(\hbox{ad}Y(Z))$$
for all $Y\in\m,Z\in S(\p\oplus\l')$. It is easy to see that if
$Y\in \g_{-i}$, 
where $i > 0$, and $Z\in  S(\p\oplus\l')_j$, then
$\gamma(Y,Z)\in S(\p\oplus\l')_{j-i}\oplus  S(\p\oplus \l')_{j-i+2})$. Hence we can
write $\gamma=\gamma_0+\gamma_2$ where $\gamma_0(Y,Z)$ is the projection on $S(\p\oplus\l')_{j-i}$ and $\gamma_2(Y,Z)$ is the projection on $S(\p\oplus\l')_{j-i+2}$.
The condition (\ref{equP}) implies that for any  $X\in Gr_KW_{\chi}$
\begin{equation}\label{equP1}
\gamma_2(\m,\bar X)=0.
\end{equation}
On the other hand, $\gamma_2:\m\times S(\p\oplus\l')\to S(\p\oplus\l')$ is a derivation with respect to the second argument defined by the condition
$$\gamma_2(Y,Z)=\chi([Y,Z])$$
for any $Y\in\m,Z\in\p\oplus\l'$. Now by induction on the polynomial degree of $\bar X$ in $S(\p\oplus\l')$, using Lemma \ref{aux},
one can show that  (\ref{equP1}) implies $\bar X\in S(\g^\chi)$ (respectively, $\bar X\in S(\g^\chi\oplus\C\theta)$).
\end{proof}

\begin{proposition}\label{towpr} Assume that  $\hbox{dim}(\g_{-1})_{\bar 1}$ is even (respectively, odd). Let $y_1,\dots,y_p$ be a basis in $\g^\chi$ homogeneous in the good $\Z$-grading.
Assume that there exist $Y_1,\dots, Y_p\in W_\chi$ such that $\overline{\hbox{Gr}_K Y_i}=y_i$ for all $i=1,\dots, p$.

(a) $Y_1,\dots,Y_p$ generate $W_\chi$ (respectively,  $Y_1,\dots,Y_p$ and $\pi(\theta)$ generate $W_\chi$) ;

(b) $\hbox{Gr}_KW_\chi\simeq S(\g^\chi)$ (respectively,
$\hbox{Gr}_KW_\chi\simeq S(\g^\chi)\otimes \C[\xi]$, where 
$\C[\xi]$ is the exterior algebra generated by one element $\xi$).
\end{proposition}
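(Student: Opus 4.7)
My plan for (a) is a primary induction on the Kazhdan degree $D$ of $X\in W_\chi$, with a secondary descending induction on the good-degree of the top good-degree component of the Kazhdan symbol $\hbox{Gr}_K X\in S(\p\oplus\l')$. The base case $D=0$ is immediate, since Kazhdan-degree-zero elements of $U(\g)/I_\chi$ are scalars. For the inductive step, let $\bar X$ denote the top good-degree part of $\hbox{Gr}_K X$. By Theorem \ref{PremetQ}, $\bar X\in S(\g^\chi)$ in the even case and $\bar X\in S(\g^\chi\oplus\C\theta)$ in the odd case, so we may write $\bar X=P(y_1,\dots,y_p)$, resp.\ $\bar X=P(y_1,\dots,y_p)+Q(y_1,\dots,y_p)\theta$, as a supersymmetric polynomial in the basis. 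Writing $d_i$ for the good-grading degree of $y_i$ (so that $Y_i$ has Kazhdan degree $d_i+2$), form the lift $\tilde P:=P(Y_1,\dots,Y_p)$, resp.\ $\tilde P:=P(Y_1,\dots,Y_p)+Q(Y_1,\dots,Y_p)\pi(\theta)$, inside $W_\chi$. Since $\overline{\hbox{Gr}_K Y_i}=y_i$, expanding the product of the symbols in $S(\p\oplus\l')$ shows that the top good-degree part of $\hbox{Gr}_K\tilde P$ in Kazhdan degree $D$ coincides with $\bar X$. Consequently $X-\tilde P\in W_\chi$ has Kazhdan degree $\leq D$ with strictly smaller top good-degree in degree $D$ (or with Kazhdan degree $<D$). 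As the good-degree in $S(\p\oplus\l')_D$ is bounded below, the secondary induction terminates, reducing to a lower Kazhdan degree, and the primary induction closes.

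For (b) I would construct a homomorphism of supercommutative superalgebras $\psi:S(\g^\chi)\to\hbox{Gr}_K W_\chi$ (resp.\ $\psi:S(\g^\chi)\otimes\C[\xi]\to\hbox{Gr}_K W_\chi$) sending $y_i\mapsto\hbox{Gr}_K Y_i$ and, in the odd case, $\xi\mapsto\hbox{Gr}_K\pi(\theta)$. This is well-defined: $\hbox{Gr}_K W_\chi$ is supercommutative, and in the odd case $(\hbox{Gr}_K\pi(\theta))^2=0$ since $\pi(\theta)^2=1$ lies in Kazhdan degree $0$ whereas $\pi(\theta)$ has Kazhdan degree $1$. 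Surjectivity is the image of part (a) in the associated graded. For injectivity, let $\iota:S(\g^\chi)\otimes\C[\xi]\hookrightarrow S(\p\oplus\l')$ be the canonical embedding extending $\xi\mapsto\theta$, and decompose a Kazhdan-homogeneous $f$ of degree $D$ by total polynomial degree, $f=\sum_n f_n$. The same expansion used in (a) shows that the good-degree $D-2n$ component of $\psi(f)$ in $S(\p\oplus\l')_D$ is exactly $\iota(f_n)$, so $\psi(f)=0$ forces each $f_n=0$. Hence $\psi$ is an isomorphism.

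The main obstacle is the careful bookkeeping in (a) across the two filtrations — Kazhdan and good-degree — and in particular the verification that $\tilde P$ recovers $\bar X$ as the top good-degree of its own Kazhdan symbol. This rests on the precise matching between the Kazhdan degree of $Y_i$ and the good-grading degree of $y_i$ built into the hypothesis $\overline{\hbox{Gr}_K Y_i}=y_i$. Once (a) is established, (b) follows formally from Theorem \ref{PremetQ} via the map $\psi$.
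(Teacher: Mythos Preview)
Your argument for (a) is correct and is essentially the paper's argument, just made more explicit: the paper argues by contradiction on minimal Kazhdan degree and asserts in one line that subtracting $\sum c(a_1,\dots,a_p)Y_1^{a_1}\cdots Y_p^{a_p}$ lowers the Kazhdan degree, whereas you correctly unpack this as a secondary descending induction on the good-degree of the top component, since a single subtraction only kills the leading good-degree piece in Kazhdan degree $D$.

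For (b) your approach is dual to the paper's and there is one imprecision. The paper takes a graded complement $\p=\g^\chi\oplus\mathfrak r$ and uses the \emph{projection} $\gamma:S(\p\oplus\l')\to S(\g^\chi)$ with kernel $(\mathfrak r\oplus\l')S(\p\oplus\l')$; injectivity of $\gamma|_{\hbox{Gr}_K W_\chi}$ is then immediate from Theorem~\ref{PremetQ} (the top good-degree piece survives), and surjectivity plus a dimension count comes from (a). You instead map \emph{in} via $\psi:S(\g^\chi)\to\hbox{Gr}_K W_\chi$. Your injectivity claim that ``the good-degree $D-2n$ component of $\psi(f)$ is exactly $\iota(f_n)$'' is not literally true: the lower good-degree tails $r_i=\hbox{Gr}_K Y_i-y_i$ have polynomial degree $\ge 2$, so expanding $\psi(f_m)$ for $m<n$ produces terms that can also sit in good-degree $D-2n$. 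The fix is to argue only at the \emph{top} good-degree: if $f\neq 0$ and $n_0$ is minimal with $f_{n_0}\neq 0$, then the good-degree $D-2n_0$ component of $\psi(f)$ is $\iota(f_{n_0})$ (no contributions from $m<n_0$ since $f_m=0$, none from $m>n_0$ since those have good-degree $\le D-2m<D-2n_0$), hence $\psi(f)\neq 0$. With that adjustment your (b) goes through; the paper's projection $\gamma$ avoids this bookkeeping altogether.
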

\begin{proof} We will give a proof in the case  when $\hbox{dim}(\g_{-1})_{\bar 1}$ is even. The odd case is analogous and we leave it to the reader.
Let us first prove (a) by contradiction. Assume that $X\in W_\chi$ is an element of minimal Kazhdan degree such that it does not lie in the
subalgebra generated by $Y_1,\dots,Y_p$. By Theorem \ref{PremetQ} we have
$$\overline  {\hbox{Gr}_K X}=\sum c(a_1,\dots,a_p)y_1^{a_1}\dots y_p^{a_p}.$$
Let
$$Z=X-\sum c(a_1,\dots,a_p)Y_1^{a_1}\dots Y_p^{a_p}.$$
Then Kazhdan degree of $Z$ is less than that of $X$. By minimality of degree of $X$ we conclude that $Z=0$. That contradicts our assumption.

To prove (b) write $\p=\g^\chi\oplus \mathfrak r$, where $\mathfrak r$ is some graded subspace complementary to $\g^\chi$. Let
$\gamma: S(\p\oplus \l')\to S(\g^\chi)$ denote the natural projection with kernel $(\mathfrak r\oplus \l') S(\p\oplus \l')$. By (a) and Theorem \ref{PremetQ}
the restriction
$\gamma: \hbox{Gr}_KW_\chi\to  S(\g^\chi)$ is an isomorphism of rings.
\end{proof}

\begin{conjecture}\label{generalconjecture}
 Assume that $\g$ is a Lie superalgebra with reductive even part $\g_{\bar 0}$.
 If $\hbox{dim}(\g_{-1})_{\bar 1}$ is even, then $Gr_KW_\chi\simeq S(\g^\chi)$ and if
$\hbox{dim}(\g_{-1})_{\bar 1}$ is odd, then  $Gr_KW_\chi\simeq S(\g^\chi)\otimes \C[\xi]$, where $\C[\xi]$ is the exterior algebra generated by one element $\xi$.
\end{conjecture}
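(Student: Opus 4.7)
The statement reduces via Proposition~\ref{towpr} to the following existence problem: given a homogeneous basis $y_1,\dots,y_p$ of $\g^\chi$, produce lifts $Y_1,\dots,Y_p\in W_\chi$ whose leading Kazhdan symbols $\overline{\hbox{Gr}_K Y_i}$ recover the $y_i$ inside $S(\p\oplus\l')$. Once this is done, Proposition~\ref{towpr}(b) yields $\hbox{Gr}_K W_\chi\simeq S(\g^\chi)$ in the even case, and in the odd case one additionally notes that $\pi(\theta)\in W_\chi$ with $\pi(\theta)^2=1$, so its symbol in $\hbox{Gr}_K W_\chi$ is the generator of the exterior factor $\C[\xi]$. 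The inclusion $\hbox{Gr}_K W_\chi\hookrightarrow S(\g^\chi)$ (respectively, $S(\g^\chi)\otimes\C[\xi]$) is already provided by Theorem~\ref{PremetQ}; what remains is surjectivity.

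My plan for constructing each $Y_i$ is an inductive procedure on Kazhdan degree. Setting $Y_i^{(0)}=\pi(y_i)$, the condition $y_i\in\g^\chi$ guarantees that $\chi([\m,y_i])=0$, so the top-degree obstruction to $Y_i^{(0)}\in W_\chi$ vanishes; the residual error $\hbox{ad}(a)Y_i^{(0)}$ lies in $I_\chi$ modulo terms of strictly smaller Kazhdan filtration degree. One then adds correction terms $Z_i$ of lower Kazhdan degree to cancel the successive obstructions, producing $Y_i=\pi(y_i)+Z_i$ with $\hbox{ad}(a)Y_i\in I_\chi$ for all $a\in\m$. This procedure is governed by a Koszul-style complex computing the $(\m,\chi)$-twisted invariants of $\hbox{Gr}_K(U(\g)/I_\chi)\simeq S(\p\oplus\l')$, with differential induced by the derivation $\gamma_2$ of the proof of Theorem~\ref{PremetQ}; the unobstructedness of each inductive step is exactly the claim to be proved.

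The main obstacle is showing the vanishing of the relevant higher Koszul cohomology. In Premet's semisimple Lie algebra setting this has a clean geometric origin: the unipotent group $M=\exp(\m)$ acts freely on the affine Slodowy slice transverse to $\chi$, so the associated Koszul complex is exact. In the super setting I would seek the parallel picture, identifying $\hbox{Gr}_K(U(\g)/I_\chi)$ with the coordinate ring of a super-transversal slice through $\chi$ and proving freeness of the analogous super-group action. The parity hypothesis on $\dim(\g_{-1})_{\bar 1}$ is essential here: when it is even, the maximal isotropic $\l$ is genuinely Lagrangian and the slice is modeled on $\g^\chi$; when it is odd, the extra direction $\C\theta$ accounts precisely for the exterior factor $\C[\xi]$, and the Clifford subalgebra generated by $\pi(\theta)$ should be split off before applying the even-case argument.

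The principal technical difficulty I anticipate is proving the super-freeness (equivalently, the Koszul exactness) uniformly across all basic classical superalgebras and $Q(n)$, since the invariant-theoretic machinery underlying the semisimple proof does not transfer without modification to the odd directions of $\g$. The paper circumvents this difficulty in the case $\g=Q(n)$ by exhibiting explicit Sergeev-type lifts of a basis of $\g^\chi$, thereby bypassing the cohomological argument entirely; a general proof of the conjecture may well require an analogous family-by-family construction of generators, or else a new geometric input identifying the super-Slodowy slice and proving freeness abstractly.
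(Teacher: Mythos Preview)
The statement is a conjecture, and the paper does not prove it in general; the only case established within the paper is $\g=Q(n)$ with regular $\chi$ (Corollary~\ref{cor}), done exactly as you describe in your final paragraph: by exhibiting explicit Sergeev-type elements $\pi(e_{n,1}^{(m)}),\pi(f_{n,1}^{(m)})$ whose leading symbols recover a basis of $\g^\chi$ (Lemma~\ref{claim2}), then invoking Proposition~\ref{towpr}(b). Your assessment of what the paper achieves, and how, is accurate.

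Your outline of a general strategy is reasonable and correctly locates the crux. One small clarification: the map you attribute to Theorem~\ref{PremetQ} is not a literal inclusion of $\hbox{Gr}_K W_\chi$ as a subspace of $S(\g^\chi)$, but rather the injectivity of the projection $\gamma:S(\p\oplus\l')\to S(\g^\chi)$ (from the proof of Proposition~\ref{towpr}(b)) when restricted to $\hbox{Gr}_K W_\chi$. This injectivity does follow from Theorem~\ref{PremetQ}, since the top good-degree piece $\bar X$ of any nonzero $X\in\hbox{Gr}_K W_\chi$ lies in $S(\g^\chi)$ and is fixed by $\gamma$. So your division of labor---injectivity from Theorem~\ref{PremetQ}, surjectivity from the existence of lifts---is correct once phrased through $\gamma$.

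The gap you identify, namely the unobstructedness of the inductive correction procedure (equivalently, exactness of the twisted Koszul complex governed by the derivation $\gamma_2$), is genuinely the open point. Premet's Lie-algebra argument uses the free action of the unipotent group $\exp(\m)$ on a transversal slice, and as you note this geometric input has no direct super analogue. Your proposal is therefore not a proof but an accurate diagnosis of what remains open, which is the appropriate response to a conjecture.
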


\subsection{ Kostant's theorem and the regular case for Lie superalgebras}
A nilpotent $\chi\in\g^*_{\bar 0}$ is called {\it regular} if $G_{\bar 0}$-orbit of $\chi$ has
maximal dimension, i.e. the dimension of $\g^\chi_{\bar 0}$ is minimal.
Let us recall that for a regular nilpotent $\chi$ and
a reductive Lie algebra $\g$ the algebra $W_\chi$ is isomorphic to
the center $Z(\g)$ of $U(\g)$, see \cite{Ko}.

It is not hard to see that this result of B. Kostant does not hold for Lie superalgebras.
In Section 3 we will prove that for regular $\chi$, $W_\chi$ satisfies
the
Amitsur--Levitzki identity and all irreducible representations of $W_\chi$
are finite-dimensional with dimension not greater than $2^{k+1}$,
where $k$ is the constant depending on defect of $\g$ and the parity
of $\hbox{dim}\g^\chi_{\bar 1}$.
Recall that for a contragredient $\g$ the defect of $\g$ is the
maximal number of mutually orthogonal linearly independent isotropic
roots, \cite {KW}.

\subsection{Good $\Z$-gradings for superalgebras in the regular case} Good $\Z$-gradings for basic
classical superalgebras are classified in \cite{H}. In the case when $\chi$ is
regular and $\g$ is of type II (i.e. $\g_{\bar 0}$ is semisimple and
$\g_{\bar 1}$ is a simple  
$\g_{\bar 0}$-module), the only good
$\Z$-grading is the Dynkin $\Z$-grading, and it is never even. If $\g$ is of
type I, i.e. $\g_{\bar 0}$ has a non-trivial center, we can
choose an even good $\Z$-grading for any $\chi$. For the Lie superalgebra $Q(n)$ the
analogue of Dynkin $\Z$-grading is even for any $\chi$.

Let us concentrate on the case of basic classical or exceptional Lie
superalgebras of type II and regular $\chi$. 
In this case
$\chi(\cdot)=(e|\cdot)$ for some principal nilpotent element
$e\in\g_{\bar 0}$.
We are going to describe the Dynkin $\Z$-grading on $\g$ in terms of a specific Borel subalgebra.
Let $\b_{\bar 0}\subset\g_{\bar 0}$ be the Borel subalgebra containing $e$. Since $e$ is principal, this Borel subalgebra is unique.
Let $\Pi_0$ denote the set of simple roots of $\b_{\bar 0}$.

\begin{lemma}\label{specialborel} Let $\g$ be a basic classical or exceptional Lie superalgebra of type II.

(a) There exists a Borel subalgebra $\b_{\bar 0}\subset\b\subset\g$ with the set of simple roots $\Pi$ such that for any
root $\beta\in\Pi_0$ either $\beta\in \Pi$ or $\beta=\alpha_1+\alpha_2$ for some $\alpha_1,\alpha_2\in\Pi$.

(b) Let $d$ denote the defect of $\g$. Then the number of odd roots in
$\Pi$ equals $2d$ if 
$\g=\mathfrak {osp}(2m+1|2n)$ for $m\geq n$,
$\mathfrak {osp}(2m|2n)$ for $m\leq n$ or $G_3$, and  the number of
odd roots in $\Pi$ equals $2d+1$ if 
$\g=\mathfrak {osp}(2m+1|2n)$ for $m< n$,
$\mathfrak {osp}(2m|2n)$ for $m> n$, $D(2,1;a)$ or $F_4$.

(c) Let $e,h,f$ be the $\mathfrak{sl}(2)$-triple such that
$h\in\h$. Then $\alpha(h)=2$ for any even $\alpha\in \Pi$ and
$\alpha(h)=1$ for any odd $\alpha\in\Pi$, i.e. the Dynkin $\Z$-grading is consistent.
\end{lemma}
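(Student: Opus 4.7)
The plan is a case-by-case analysis over the classification of basic classical and exceptional Lie superalgebras of type II. For each family I will exhibit an explicit Borel subalgebra $\b\supset\b_{\bar 0}$ by writing down its simple roots $\Pi$ in standard $\epsilon,\delta$-coordinates, after which all three claims will reduce to direct verifications. The content is essentially a compatibility check between the principal even Borel $\b_{\bar 0}$ (which is uniquely determined by $e$ since $e$ is principal) and a carefully chosen extension to a Borel of $\g$.

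First I would handle the orthosymplectic series $\mathfrak{osp}(2m+1|2n)$ and $\mathfrak{osp}(2m|2n)$ by means of a ``zig-zag'' simple root system that interleaves the $\epsilon_i$'s with the $\delta_j$'s, grouping the leftover coordinates at the ends according to whether $m\geq n$ or $m<n$. In such a $\Pi$, every even simple root $\beta\in\Pi_0$ that is not itself in $\Pi$ decomposes as the sum of two consecutive elements of $\Pi$ (an odd ``bridge'' together with one of its neighbors, with the degenerate case $2\delta_j=\delta_j+\delta_j$ allowed when $\delta_j\in\Pi$). This proves (a). Part (b) then follows by counting odd simple roots in the zig-zag: each transition between an $\epsilon$-block and a $\delta$-block contributes one odd root, giving $2d$ such roots when the chain is balanced at its ends and one extra terminal odd root when it is not, which matches the dichotomy in the statement. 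For the exceptional algebras $D(2,1;a)$, $G_3$, $F_4$ I would take the mixed Borel with the maximal number of odd simple roots and verify (a), (b) by inspecting the standard simple root tables.

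For (c), the element $h$ of the principal $\mathfrak{sl}(2)$-triple with $h\in\h$ is determined uniquely by $\beta(h)=2$ for every $\beta\in\Pi_0$. I would then compute $\alpha(h)$ for each $\alpha\in\Pi$ directly from the zig-zag: the values of $h$ on the coordinates $\epsilon_i$ and $\delta_j$ form two arithmetic progressions of consecutive even integers, and substituting into the simple roots of $\Pi$ yields $\alpha(h)=2$ for every even $\alpha\in\Pi$ and $\alpha(h)=1$ for every odd $\alpha\in\Pi$, which is exactly consistency of the Dynkin $\Z$-grading. For the exceptional algebras this is a short direct calculation from the explicit $\Pi$ used in (a).

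The main obstacle is bookkeeping rather than conceptual difficulty. The zig-zag has to be adapted to whether $m\geq n$ or $m<n$, and to whether the even part is of type $B$ or $D$, which together produce the two cases in (b); the degenerate summand $2\delta_j=\delta_j+\delta_j$ must be handled carefully in the $B$ and $B(m|n)$ cases; and the exceptional algebras $D(2,1;a)$, $G_3$, $F_4$ admit no uniform treatment and must be checked individually. Once $\Pi$ has been written down explicitly for each family, however, every remaining step of the verification is routine.
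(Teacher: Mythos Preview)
Your approach is essentially the same as the paper's: a case-by-case construction of an explicit Borel $\b\supset\b_{\bar 0}$, followed by direct verification. The paper phrases the choice of $\b$ abstractly as ``among all Borels containing $\b_{\bar 0}$, take one with the maximal number of odd simple roots (and containing the non-isotropic odd root if one exists)'', citing \cite{GS} for the explicit lists in the orthosymplectic case, and then writes out $\Pi$ by hand for $G_3$ and $F_4$; this is exactly your zig-zag Borel described in different language. Part (b) is handled by inspection in both. For (c) the paper simply says ``follows from $[h,e]=2e$ and (a)'', which unpacks to precisely your computation: $[h,e]=2e$ forces $\beta(h)=2$ for all $\beta\in\Pi_0$, hence determines $h$, and then one reads off $\alpha(h)$ for each $\alpha\in\Pi$ from the explicit description of $\Pi$ in (a). Your plan is correct and matches the paper; the only difference is that you spell out the zig-zag and the arithmetic-progression computation for (c), whereas the paper outsources the former to \cite{GS} and leaves the latter implicit.
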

\begin{proof} (a) Among all Borel subalgebras containing $\b_{\bar 0}$
pick up the one that has maximal number of 
odd roots and contains an odd non-isotropic
root if such roots exist. For ortho-symplectic superalgebra those Borel subalgebras are listed in \cite{GS}.

For the exceptional superalgebras we list the simple roots using the roots
description in \cite{K}. If $\g=G_3$, the set of simple roots is
$\{\delta, \gamma_1-\delta,\gamma_2\}$, where $\gamma_1$ is the short
and $\gamma_2$ is the long simple root of $G_2$.
If $\g=F_4$, then the set of simple roots
is 
$$\{\varepsilon_1-\varepsilon_2,
\frac{1}{2}(-\varepsilon_1+\varepsilon_2-\varepsilon_3-\delta), 
\frac{1}{2}(-\varepsilon_1-\varepsilon_2+\varepsilon_3+\delta),
\frac{1}{2}(\varepsilon_1+\varepsilon_2-\varepsilon_3+\delta)\}.$$

(b) follows by direct inspection.

(c) follows from the condition $[h,e]=2e$ and (a).
\end{proof}

\begin{corollary}\label{defect}  Let $\g$ be a basic classical or
exceptional Lie superalgebra of type II, and 
$d$ be its defect.
If $\g=\mathfrak {osp}(2m+1|2n)$ for $m\geq n$,
$\mathfrak {osp}(2m|2n)$ for $m\leq n$ or $G_3$, then
$\hbox{dim}{}\g_{-1}=(0|2d)$. 
If $\g=\mathfrak {osp}(2m+1|2n)$ for $m< n$,
$\mathfrak {osp}(2m|2n)$ for $m> n$, $D(2,1;a)$ or $F_4$, then
$\hbox{dim}{}\g_{-1}=(0|2d+1)$. By Lemma \ref{specialborel}(c)
$\hbox{dim}\g_{-1}$ equals the number of irreducible
$\mathfrak{sl}(2)$-components in $\g_{\bar 1}$. Therefore   $\dim(\g^{\chi})_{\bar 1}  = 2d$ or $2d+1$.
\end{corollary}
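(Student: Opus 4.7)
The plan is to compute $\dim \g_{-1}$ directly from the root decomposition supplied by Lemma \ref{specialborel}, and then relate it to $\dim(\g^{\chi})_{\bar 1}$ via the principal $\mathfrak{sl}(2)$-action on $\g_{\bar 1}$.

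First I would identify $\g_{-1}$ with the span of root spaces indexed by negative simple odd roots. A root $\beta$ of $\g$ has a unique decomposition $\beta = \sum_{\alpha \in \Pi} n_\alpha \alpha$ with all $n_\alpha$ of a single sign, and by Lemma \ref{specialborel}(c),
\[
\beta(h) \;=\; 2 \sum_{\alpha \text{ even}} n_\alpha \;+\; \sum_{\alpha \text{ odd}} n_\alpha.
\]
Imposing $\beta(h) = -1$ with integers $n_\alpha \leq 0$ forces all but one coefficient to vanish, with the surviving one equal to $-1$ and indexed by an odd simple root. Hence $\dim \g_{-1}$ equals the number of odd roots in $\Pi$, which by Lemma \ref{specialborel}(b) is $2d$ in the first list of cases and $2d+1$ in the second. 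As all these root vectors are odd, $\g_{-1} \subset \g_{\bar 1}$, yielding the superdimensions $(0|2d)$ and $(0|2d+1)$ stated in the corollary.

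To pass from $\dim \g_{-1}$ to $\dim(\g^{\chi})_{\bar 1}$, I would use that on a basic classical $\g$ of type II the even non-degenerate invariant form identifies $\chi$ with a nilpotent $e \in \g_{\bar 0}$, so $\g^{\chi} = \g^e = \ker(\hbox{ad}\, e)$. Decomposing $\g_{\bar 1} = \bigoplus_{s=1}^{N} V(k_s)$ as an $\mathfrak{sl}(2)$-module for the triple $(e,h,f)$, consistency of the Dynkin grading forces each highest weight $k_s$ to be a positive odd integer, so each $V(k_s)$ contributes exactly one vector to the $h$-weight $-1$ eigenspace and exactly one highest weight vector. Therefore $N = \dim \g_{-1} = \dim \g^e_{\bar 1}$, so $\dim(\g^{\chi})_{\bar 1}$ equals $2d$ or $2d+1$ accordingly.

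I do not anticipate a serious obstacle: the argument reduces to a short combinatorial observation about negative roots of $h$-weight $-1$, together with standard $\mathfrak{sl}(2)$-representation theory, while the genuine case-by-case content is already absorbed into Lemma \ref{specialborel}. If anything, the step most worth double-checking is the single-sign property of root coefficients used to pin down $\g_{-1}$, but this is a standard consequence of the choice of Borel subalgebra.
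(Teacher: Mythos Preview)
Your proposal is correct and follows essentially the same approach as the paper, which embeds its reasoning directly in the corollary statement: consistency of the Dynkin grading (Lemma~\ref{specialborel}(c)) forces $\g_{-1}\subset\g_{\bar 1}$ and pins down $\dim\g_{-1}$ as the number of odd simple roots from Lemma~\ref{specialborel}(b), and then standard $\mathfrak{sl}(2)$-theory identifies this with both the number of irreducible components of $\g_{\bar 1}$ and $\dim(\g^\chi)_{\bar 1}$. Your root-coefficient computation showing that $\beta(h)=-1$ forces $\beta=-\alpha$ for a single odd $\alpha\in\Pi$ is exactly the unpacking of the step the paper leaves implicit.
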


\begin{corollary}\label{specialparabolic} Let $\g$ satisfy the assumptions of Corollary \ref{defect}.

(a) One can choose a maximal isotropic subspace $\l\subset\g_{-1}$
such that 
$\l=\g_{-\alpha_1}\oplus\dots\oplus\g_{-\alpha_d}$ for some
isotropic mutually orthogonal roots
$\alpha_1,\dots,\alpha_d\in\Pi$. In particular, $[\l,\l]$=0.

(b)  There exists a parabolic subalgebra $\p\subset\g$ with Levi
subalgebra $\s$ such that 
$\m\cap \s$ is an even one dimensional subspace, and if
$\n^-$ denotes the nil radical of the opposite parabolic $\p^-$, then $\n^-\subset\m$.

(c) If $\g$ does not have non-isotropic roots
(i.e. $\g=\mathfrak{osp}(2m|2n)$ or $F_4$), 
then $[\s,\s]$ is isomorphic to a direct sum of several copies
of $\mathfrak{sl}(1|1)$ and one copy
of $\mathfrak{sl}(1|2)$. If $\g$ has non-isotropic roots (i.e. $\g=\mathfrak{osp}(2m+1|2n)$ or $G_3$),
then $[\s,\s]$ is isomorphic to a direct sum of several copies of $\mathfrak{sl}(1|1)$ and one copy of
$\mathfrak{osp}(1|2)$.

\end{corollary}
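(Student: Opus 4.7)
The plan is to prove all three parts by a case-by-case analysis using the explicit Borel subalgebra $\b$ and the simple root set $\Pi$ provided by Lemma \ref{specialborel}(a), together with the grading consistency $\alpha(h)\in\{1,2\}$ for $\alpha\in\Pi$ from Lemma \ref{specialborel}(c). For part (a), I would first note that among the $2d$ or $2d+1$ odd simple roots in $\Pi$ (Lemma \ref{specialborel}(b)), one can select $d$ mutually orthogonal isotropic simple roots $\alpha_1,\dots,\alpha_d$. For $\mathfrak{osp}(m|n)$ this is immediate from the $\varepsilon$-$\delta$ description of the special Borel (the isotropic simple roots form ``$\varepsilon_i-\delta_j$'' type blocks, from which one picks a pairwise orthogonal set), and for $G_3,F_4,D(2,1;a)$ it follows by direct inspection of the simple root lists recorded in the proof of Lemma \ref{specialborel}. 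Setting $\l=\g_{-\alpha_1}\oplus\dots\oplus\g_{-\alpha_d}$, the isotropy and mutual orthogonality ensure that $\alpha_i+\alpha_j$ is not a root (for isotropic simple roots this is equivalent to $(\alpha_i,\alpha_j)=0$), so $[\g_{-\alpha_i},\g_{-\alpha_j}]=0$ and hence $[\l,\l]=0$. Since $\dim\l=d$ matches the Witt index of the form $\chi([\cdot,\cdot])$ on $\g_{-1}$ by Corollary \ref{defect}, $\l$ is maximal isotropic.

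For part (b), I would take $S:=\Pi\setminus\{\alpha_1,\dots,\alpha_d\}$ and define $\p$ to be the standard parabolic whose Levi $\s$ is generated by $\h$ and the root spaces $\g_{\pm\beta}$ for $\beta\in S$. The inclusion $\n^-\subset\m$ is checked as follows: a negative root $-\gamma\in\n^-$ has $\gamma$ involving at least one $\alpha_i$ in its simple root expansion. If $\gamma=\alpha_i$, then $\g_{-\gamma}\subset\l\subset\m$; otherwise $\gamma$ also involves a simple root from $S$ (or more than one $\alpha_i$, but such sums are not roots by the mutual orthogonality argument of part (a)), so by Lemma \ref{specialborel}(c) we get $\gamma(h)\geq 2$, and $\g_{-\gamma}\subset\bigoplus_{j\leq-2}\g_j\subset\m$. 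For $\m\cap\s$: no $\alpha_i$ lies in the root subsystem generated by $S$, so $\l\cap\s=0$; hence $\m\cap\s=\big(\bigoplus_{j\leq-2}\g_j\big)\cap\s$. The contribution from each simple isotropic root $\beta\in S$ forming an $\mathfrak{sl}(1|1)$ factor has negative root space of $h$-degree $-1$, contributing nothing. The single even (i.e., $h$-degree $-2$) contribution comes from the ``exceptional'' factor: namely $\g_{-2\varepsilon}$ when a non-isotropic odd simple root $\varepsilon\in\Pi$ is present, or $\g_{-(\gamma+\gamma')}$ when two non-orthogonal isotropic simple roots $\gamma,\gamma'\in S$ have an even sum. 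Either way, $\m\cap\s$ is an even one-dimensional subspace.

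Part (c) then amounts to reading off $[\s,\s]$ from the simple roots in $S$. The isotropic simple roots in $S$ orthogonal to all others in $S$ each generate a copy of $\mathfrak{sl}(1|1)$. The remaining structure is forced by whether $\g$ has a non-isotropic odd root: if yes, the unique non-isotropic odd simple root $\varepsilon\in\Pi$ lies in $S$ and generates $\mathfrak{osp}(1|2)$; if no, there are two non-orthogonal isotropic odd simple roots $\gamma,\gamma'\in S$ whose sum is an even root, generating $\mathfrak{sl}(1|2)$. The main obstacle is the bookkeeping: making the selection of $\{\alpha_i\}$ compatible with the structure of $S$ so that exactly one ``exceptional'' $\mathfrak{sl}(1|2)$ or $\mathfrak{osp}(1|2)$ summand appears, and verifying via the explicit simple root data that no extra degree $\leq-2$ piece creeps into $\s$; this must be checked separately for $\mathfrak{osp}(2m+1|2n)$ (both $m\geq n$ and $m<n$), $\mathfrak{osp}(2m|2n)$ (both $m\leq n$ and $m>n$), and each of $G_3,F_4,D(2,1;a)$.
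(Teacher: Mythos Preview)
Your approach to part~(a) is essentially the same as the paper's and is fine. However, there is a genuine error in your choice of $S$ for parts~(b) and~(c): you take $S=\Pi\setminus\{\alpha_1,\dots,\alpha_d\}$, which in general contains \emph{even} simple roots. By Lemma~\ref{specialborel}(c), each even simple root $\beta$ satisfies $\beta(h)=2$, so $\g_{-\beta}\subset\g_{-2}\subset\m$; since $\beta\in S$ you also have $\g_{-\beta}\subset\s$, and thus $\m\cap\s$ picks up extra even one-dimensional pieces beyond the single one you identified. For instance, in $G_3$ with $\Pi=\{\delta,\gamma_1-\delta,\gamma_2\}$ and $A=\{\gamma_1-\delta\}$, your $S$ contains the even root $\gamma_2$, so $\g_{-\gamma_2}\subset\m\cap\s$ in addition to $\g_{-2\delta}$, and $\m\cap\s$ is two-dimensional. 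Similarly, part~(c) fails with your $S$: the even simple roots in $S$ produce extra $\mathfrak{sl}(2)$-type factors (or enlarge the existing factors) in $[\s,\s]$, so it is no longer just copies of $\mathfrak{sl}(1|1)$ plus one $\mathfrak{sl}(1|2)$ or $\mathfrak{osp}(1|2)$.

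The fix, which is exactly what the paper does, is to take only the \emph{odd} simple roots not in $A$: set $S=\Pi'\setminus A$ where $\Pi'$ is the set of odd roots in $\Pi$, and let $\s$ be generated by $\h$ and the $\g_{\pm\beta}$ for $\beta\in S$. The paper also organizes the choice of $A$ combinatorially rather than case-by-case: one picks $A\subset\Pi'$ of $d$ mutually orthogonal isotropic roots so that the induced subdiagram $\Gamma_{\Pi'\setminus A}$ has the maximal number of connected components. This forces $\Gamma_{\Pi'\setminus A}$ to consist of isolated isotropic vertices (each giving an $\mathfrak{sl}(1|1)$) together with either a single non-isotropic vertex (giving $\mathfrak{osp}(1|2)$) or a single connected pair of isotropic vertices (giving $\mathfrak{sl}(1|2)$), from which (b) and (c) follow uniformly without a separate case check for each series.
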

\begin{proof} Let $\Gamma$ denote the Dynkin diagram of $\Pi$. For any
subset $C\subset \Pi$ we denote by $\Gamma_C$ 
the corresponding subdiagram of $\Gamma$.
Let $\Pi'$ denote the set of all odd roots of $\Pi$, the subgraph
$\Gamma_{\Pi'}$ is connected and  $\Pi'$ has at most 
one non-isotropic root.
Let us choose a subset $A=\{\alpha_1,\dots\alpha_d\}\subset \Pi'$ of
mutually orthogonal isotropic roots such that the 
subgraph $\Gamma_{\Pi'\setminus A}$ has
maximal number of connected components.
If $\Pi'$ contains a non-isotropic root, then $\Gamma_{\Pi'\setminus A}$ is a disjoint union of single vertex diagrams. If all roots of $\Pi'$ are isotropic, then
$\Gamma_{\Pi'\setminus A}$ is a disjoint union of several single
vertex diagrams and one diagram consisting of 
two connected isotropic vertices.
The latter is the diagram of  $\mathfrak{sl}(1|2)$.

Now we set $\s$ to be the subalgebra of $\g$ generated by $\h$ and $\g_{\pm \beta}$ for all $\beta\in \Pi'\setminus A$
and let $\p=\b+\s$. We leave to the reader to check that all requirements
of the corollary are true for this choice.
\end{proof}

\begin{example} Let $\g=\mathfrak{osp}(3|4)$. Then $\Pi$ has the Dynkin diagram
$$\otimes-\otimes\Rightarrow\bullet,$$
and $A$ consists of one middle vertex. In this case $[\s,\s]\simeq \mathfrak{sl}(1|1)\oplus\mathfrak{osp}(1|2)$.
\end{example}
\begin{example}
Let $\g=G_3$. Then $\Pi$ has the Dynkin diagram 
$$\bullet\Leftarrow\otimes\Lleftarrow \circ,$$
and  $A$ again coincides with the midle vertex. In this case we also
have $[\s,\s]\simeq \mathfrak{sl}(1|1)\oplus\mathfrak{osp}(1|2)$.
\end{example}

\subsection{The queer superalgebra $Q(n)$} Recall that
the {\it queer} Lie superalgebra is defined as follows
$$Q(n) := \lbrace
\left(\begin{array}{c|c}
A&B\\
\hline
B&A\\
\end{array}\right)\hbox{ }|\hbox{ }A, B \hbox{ are } n\times n \hbox{ matrices}\rbrace.$$
Let $\hbox{otr} \left(\begin{array}{c|c}
A&B\\
\hline
B&A\\
\end{array}\right) = \hbox{tr} B$.

\noindent
\begin{remark}
$Q(n)$ has one-dimensional center $<z>$, where $z = 1_{2n}$.
Let
$$S{Q}(n) = \lbrace X\in Q(n)
\hbox{ }|\hbox{ otr}X = 0\rbrace.$$

\noindent
The Lie superalgebra $\tilde{Q}(n) := S{Q}(n)/<z>$ is simple for $n\geq 3$, see \cite{K}.
\end{remark}

Note that
$\g = Q(n)$ admits an {\it odd} non-degenerate $\g$-invariant supersymmetric bilinear form
$$(x | y) := \hbox{otr}(xy) \hbox{ for } x, y\in \g.$$
Therefore,
we identify the coadjoint module $\g^*$ with $\Pi(\g)$, where $\Pi$ is
the  change of parity functor.

\noindent
Let $e_{i,j}$ and $f_{i,j}$ be standard bases in $\g_{\bar 0}$ and $\g_{\bar 1}$ respectively:

\vskip 0.2in
$$e_{i,j} =
\left(\begin{array}{c|c}
E_{ij}&0\\
\hline
0&E_{ij}\\
\end{array}\right), \quad
f_{i,j} =
\left(\begin{array}{c|c}
0&E_{ij}\\
\hline
E_{ij}&0\\
\end{array}\right),$$
where $E_{ij}$ are elementary $n\times n$  matrices.

Let $\s\l(2) = <e, h, f>$, where
$$e = \sum_{i=1}^{n-1}e_{i,i+1},
\quad h = \hbox{diag}(n-1, n-3, \ldots, 3-n, 1-n), \quad
f = \sum_{i=1}^{n-1}i(n-i)e_{i+1,i}.$$
Note that $e$ is a regular nilpotent element,
$h$ defines an even Dynkin $\Z$-grading of
$\g$ whose degrees on the elementary matrices are

\begin{eqnarray}
&\left(\begin{array}{cccc|cccc}
0&2&\cdots&2n-2&0&2&\cdots&2n-2\nonumber\\
-2&0&\cdots&2n-4&-2&0&\cdots&2n-4\\
\cdots&\cdots&\cdots&\cdots&\cdots&\cdots&\cdots&\cdots\\
2-2n&\cdots&\cdots&0&2-2n&\cdots&\cdots&0\\
\hline
0&2&\cdots&2n-2&0&2&\cdots&2n-2\nonumber\\
-2&0&\cdots&2n-4&-2&0&\cdots&2n-4\\
\cdots&\cdots&\cdots&\cdots&\cdots&\cdots&\cdots&\cdots\\
2-2n&\cdots&\cdots&0&2-2n&\cdots&\cdots&0\\
\end{array}\right).
\end{eqnarray}

\noindent
Let  $E = \sum_{i=1}^{n-1}f_{i,i+1}$. Since we have an isomorphism $\g^*\simeq\Pi(\g)$, an even regular nilpotent $\chi\in\g^*$ can be defined by
$\chi(x) := (x|E)$ for $x \in \g$.
Note that the Dynkin $\Z$-grading is good for $\chi$.
We have that
\begin{equation}\label{equ1}
\g^\chi=\g^E = \lbrace z, e, e^2, \ldots, e^{n-1} \hbox{ }|\hbox{ } H_0, H_1, \ldots, H_{n-1} \rbrace,\quad  \hbox{dim}(\g^E) = (n|n),
\end{equation}
where
$H_k = \sum_{i=1}^{n-k}(-1)^{i+k-1}f_{i,i+k}$ for $k = 0, \ldots, n-1$.
Let
$$\m = \bigoplus_{j = 1}^{n-1}\g_{- 2j}.$$
Note that $\m$ is generated by $e_{i+1, i}$ and $f_{i+1, i}$, where $i = 1, \ldots, n-1$, and
\begin{equation}\label{equ2}
\chi(e_{i+1, i}) = 1, \quad \chi(e_{i+k, i}) = 0 \hbox{ if  } k\geq 2,\quad \chi(f_{i+k, i}) = 0
\hbox{ if  } k\geq 1.
\end{equation}
The left ideal $I_\chi$ and $W_\chi$ are defined now as usual. Moreover,
$$\b:=\bigoplus_{j = 0}^{n-1}\g_{2j}$$
is a Borel subalgebra of $\g$, $\h:=\g_0$ is a Cartan subalgebra, and $\b=\h\oplus\n$, where
$$\n:=\bigoplus_{j = 1}^{n-1}\g_{2j}.$$
Note that the algebra $W_\chi$ can be regarded as a {\it subalgebra} of $U(\b)$.

\section{Some general results}
\subsection{The Harish-Chandra homomorphism}
In this section we assume that $\g$ is a basic classical Lie
superalgebra or $Q(n)$. Let $\p\subset \g$ be a 
parabolic subalgebra
such that $\n^-\subset\m\subset \p^-$, where $\n^-$ denotes the nilradical of the opposite parabolic $\p^-$.
Let $\s$ be the Levi subalgebra of $\p$, $\n$ be its nilradical and
$\m^{\s}=\m\cap \s$. Note that $\m=\n^-\oplus \m^s$. We denote by 
$Q_\chi^\s$ the induced module
$U(\s)\otimes_{U(\m^\s)}C_\chi$, where by $\chi$ we
understand the restriction of $\chi$ on $\s$. Let 
$$\bar W_\chi^\s=\hbox{End}_{U(\s)}Q_\chi^\s=(Q_\chi^\s)^{\m^\s}.$$ 

Let $J_\chi$ (respectively $J_\chi^\s$)
be the left ideal in $U(\p)$ (respectively in $U(\s)$) generated
by $a-\chi(a)$ for all $a\in \m^\s$.  

Finally, let
$\bar\vartheta:U(\p)\to U(\s)$ denote the projection with the kernel
$\n U(\p)$. Note that 
$\bar \vartheta(J_\chi)=J_\chi^\s$. Thus,
the projection $\vartheta':U(\p)/J_\chi\to U(\s)/J_\chi^\s$ is well defined.

Note that we have an isomorphism of vector spaces $Q_\chi\simeq U(\p)/J_\chi$, hence $W_\chi$ 
can be identified with  
a subspace in $(U(\p)/J_{\chi})^{\m^\s}$. On the other hand, $\bar W_\chi^\s$
can be identified with the subspace $(U(\s)/J_{\chi}^s)^{\m^\s}$. 
Consider a map $\vartheta: W_\chi\to U(\s)/J^\s_\chi$ obtained by the restriction of
$\vartheta'$ to $W_\chi$. Since $\hbox{ad}{}\m^\s(\n)\subset\n$,
$\vartheta$ maps $\hbox{ad}{}\m^\s$-invariants to 
$\hbox{ad}{}\m^\s$-invariants. In other words,
$\vartheta(W_\chi)\subset \bar W_\chi^\s$. Furthermore, one can easily see
that $\vartheta: W_\chi\to \bar W_\chi^\s$ is a 
homomorphism of algebras.

An important example is as follows. Assume that $\g$ admits an even good grading with respect to $\chi$. Then we can set
$\p=\displaystyle \bigoplus_{i\geq 0}\g_i$. Then $\s=\g_0$, $\m^\s=0$
and $\vartheta$ is a homomorphism 
$W_\chi\to U(\s)$.

\begin{theorem}\label{HCinjective}
The homomorphism $\vartheta: W_\chi\to \bar W_\chi^\s$ is injective.
\end{theorem}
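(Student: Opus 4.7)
The plan is to reduce the injectivity of $\vartheta$ to a PBW-expansion statement and then prove it by an inductive use of the Whittaker condition from $\n^-\subset\m$.

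First, I identify $U(\p)/J_\chi$ with $U(\n)\otimes Q_\chi^\s$ via PBW. Ordering the generators of $\n$ before those of $\s$ yields $U(\p)\simeq U(\n)\otimes U(\s)$ as a vector space; since $J_\chi$ is the left ideal of $U(\p)$ generated by $\m^\s\subset\s$, one has $J_\chi = U(\n)\otimes J_\chi^\s$, whence
\[
U(\p)/J_\chi\;\simeq\;U(\n)\otimes Q_\chi^\s.
\]
Under this identification $\vartheta'$ becomes $\epsilon\otimes\mathrm{id}$, where $\epsilon\colon U(\n)\to\C$ is the augmentation, so $\ker\vartheta' = U(\n)_{\geq 1}\otimes Q_\chi^\s$, and the theorem reduces to showing $W_\chi\cap\bigl(U(\n)_{\geq 1}\otimes Q_\chi^\s\bigr)=0$.

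Second, I argue by contradiction. Suppose $w\in W_\chi$ is nonzero with $\vartheta(w)=0$. Expand $w = \sum_\beta n_\beta\otimes q_\beta$ in a PBW basis of $U(\n)$ and weight the indices $\beta$ by the eigenvalue of the grading element $h\in\s$ of the good $\Z$-grading (positive on $\n$); then $q_\emptyset = 0$, and every nontrivial $n_\beta$ has strictly positive weight. Pick $\beta_0$ with $q_{\beta_0}\neq 0$ and $\mu_{\beta_0}$ minimal. I apply the Whittaker identity $[a,w]\in I_\chi$ for an iterated adjoint operator $\mathrm{ad}(a_{\beta_0})$, where $a_{\beta_0}\in U(\n^-)$ is built as a product of root vectors in $\n^-$ dual to the factors of $n_{\beta_0}$ via the bracket pairing $\n^-\otimes\n\to\s$. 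After reduction modulo $I_\chi$, the augmentation (in $U(\n)$) component of $[a_{\beta_0},w]$ takes the form
\[
c_{\beta_0}\,q_{\beta_0} \;+\; \sum_{\mu_{\beta'}<\mu_{\beta_0}}(\cdots)\,q_{\beta'}
\]
with $c_{\beta_0}\in\C^\times$. Minimality of $\beta_0$ forces the second sum to vanish, yielding $q_{\beta_0}=0$ and contradicting the choice of $\beta_0$.

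The main obstacle is proving $c_{\beta_0}\neq 0$ and controlling the cross terms in the extraction. I plan to handle this by passing to the associated graded with respect to the $\n$-degree filtration on $U(\p)/J_\chi$: the graded object $S(\n)\otimes Q_\chi^\s$ makes the iterated bracket a nondegenerate contraction of $S(\n)$ against $S(\n^-)$ through the pairing to $\s$, and the strict inequality $\mu_{\beta'}<\mu_{\beta_0}$ for cross terms is automatic by weight counting. Super signs and the possible correction by the odd one-dimensional piece $\C\theta$ from Theorem~\ref{PremetQ}(b) require care in bookkeeping but introduce no essential difficulty.
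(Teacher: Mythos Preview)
Your approach has a real gap in the extraction step. The bracket $[\n^-,\n]$ lands in $\s$, not in $\C$, so the coefficient you call $c_{\beta_0}$ is at best an element of $U(\s)$ acting on $Q_\chi^\s$, not a nonzero scalar; you would still need to argue that this action is injective. More seriously, there are in general many PBW monomials $n_\beta$ sharing the minimal weight $\mu_{\beta_0}$, and they all contribute to the augmentation component. Already for $\g=\mathfrak{sl}(3)$ with $\p=\b$, the weight-$4$ monomials $e_{12}$ and $e_1e_2$ both appear: a direct computation shows the $U(\h)$-part of $\pi(\mathrm{ad}(f_{12})(e_1e_2))$ is $-h_2\neq 0$, so the augmentation component of $\pi(\mathrm{ad}(f_{12})(w))$ is $-(h_1+h_2)\,q_{e_{12}}-h_2\,q_{e_1e_2}$, a genuine $U(\h)$-linear combination rather than ``$c_{\beta_0}q_{\beta_0}$ plus strictly lower terms''. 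Your appeal to a ``nondegenerate contraction'' $S(\n^-)\times S(\n)\to U(\s)$ acting on $Q_\chi^\s$ is thus exactly the hard point, and you give no argument for it. Note also that $\chi$ is typically nonzero on $\n^-$, so the projection $\pi$ raises the parabolic weight; your weight bookkeeping needs this built in.

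The paper circumvents all of this by a degeneration. One grades $\g$ by the parabolic, $\g=\bigoplus_i\g_{(i)}$ with $\p=\g_{(\geq 0)}$, and uses the automorphisms $\phi_t$ rescaling $\g_{(j)}$ by $t^j$ to deform $\chi$ to $\chi_0:=\lim_{t\to 0}\phi_t(\chi)$, which vanishes on $\n^-$ but agrees with $\chi$ on $\m^\s$. The lowest graded piece $X_{(d)}$ of any $X\in W_\chi$ then lies in the $\m$-invariants of $U(\g)\otimes_{U(\p^-)}Q_\chi^\s$, a genuine parabolically induced module with trivial $\n^-$-action on the inducing module. A separate lemma (Lemma~\ref{parind}), proved by comparing central characters through the Harish-Chandra map $Z(\g)\to Z(\s)$ at a generic typical weight, shows that the $\n^-$-invariants of this induced module are exactly $Q_\chi^\s$, forcing $d=0$. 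The degeneration is precisely what removes your obstacle: once $\chi_0|_{\n^-}=0$, the reduction modulo $I_{\chi_0}$ no longer raises weights, and the invariants calculation becomes a clean highest-weight statement rather than a delicate contraction identity.
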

\begin{proof} We consider a new grading $\g=\displaystyle \bigoplus_{i\in\mathbb Z}\g_{(i)}$ such that $\p=\displaystyle \bigoplus_{i\geq 0}\g_{(i)}$.
Note that $J_\chi$ is a graded ideal and hence $Q_\chi$ is also a
graded vector space with respect to this new grading. Note that $(Q_\chi)_{(0)}=Q_\chi^\s$.
For any $t\in\mathbb C\setminus\{0\}$ let $\phi_t$ denote the
automorphism of $\g$ that multiplies elements 
of $\g_{(j)}$ by $t^j$.
Let $X\in W_\chi=Q_\chi^\m$. Write $$X=\sum_{i=d}^s X_{(i)},$$
where $X_{(i)}\in (Q_\chi)_{(i)}$ and $X_{(d)}\neq 0$. Our goal is to show that $d=0$.
Let
$$\chi_0=\lim_{t\to 0}{\phi_t(\chi)}.$$
Then $\chi_0(\n^-)=0$, $\chi_0|_{\m^\s}=\chi|_{\m^\s}$.
Note that
$$t^{-d}\phi_t(X)\in W_{\phi_t(\chi)}$$
and hence by the standard continuity argument $X_{(d)}\in (U(\g)\otimes_{U(\m)}C_{\chi_0})^\m$.
Note that 
$$U(\g)\otimes_{U(\m)}C_{\chi_0}=U(\g)\otimes_{U(\p^-)}U(\p^-)\otimes_{U(\m)}C_{\chi_0}.$$
Furthermore, $U(\p^-)\otimes_{U(\m)}C_{\chi_0}$ has the trivial action
of $\n^-$ and is isomorphic to $Q_\chi^\s$ as an $\s$-module.
Thus, $X_{(d)} \in (U(\g)\otimes_{U(\p^{-})}Q_{\chi}^{\s})^{\m}$.

We need now the following Lemma.
\begin{lemma}\label{parind}
$(U(\g)\otimes_{U(\p^{-})}Q_{\chi}^{\s})^{\n^-}=Q_\chi^\s$. 
\end{lemma}
\begin{proof} Let $\zeta$ be a generic central character of $U(\s)$ and $S$ be a quotient of  $Q_\chi^\s$
admitting this central character. Consider the parabolically induced module $M:=U(\g)\otimes_{U(\p^-)}S$
(here we assume that $\n^-$ acts trivially on $S$). We will prove that $M^{\n^-}=S$.

 Let $\gamma:Z(\g)\to Z(\s)$ be the restriction of the Harish-Chandra
 projection 
$U(\g)\to U(\s)$ with kernel $\n^-U(\g)+U(\g)\n^+$.
Note that $M$ admits central character $\gamma^*(\zeta)$. Any simple $\s$-submodule $N\subset M^{\n^-}$ that
admits central character $\zeta'$ generates in $M$ a submodule admitting central character $\gamma^*(\zeta')$.
Hence we have $\gamma^*(\zeta)=\gamma^*(\zeta')$.

Recall the correspondence between central characters and weights. One
chooses a Borel subalgebra $\b^\s$ in $\s$ 
and set $\zeta_\lambda$ to be the central
character of
the Verma module over $\s$ with highest weight $\lambda$. Furthermore
$\b^\s\oplus\n^-$ is a Borel subalgebra in 
$\g$ and we define the $U(\g)$-central character
$\bar{\zeta}_\lambda$ to be the central character of the Verma module
over $\g$ with highest weight $\lambda$. 
Obviously, $\gamma^*(\zeta_\lambda)=\bar{\zeta}_\lambda$.
Moreover, all simple $\s$-subquotients of $M$ admit central character
$\zeta_\mu$ for some 
$\mu\in \lambda+R(\n^-)$ where $R(\n^-)$ is the set of weights of
$U(\n^-)$.
Recall that if $\lambda$ is typical, then
$\bar{\zeta}_\lambda=\bar{\zeta}_\nu$ implies that 
$\nu$ is obtained from $\lambda$ by the shifted action of the Weyl
group of $\g_0$.
 
Let us choose a typical $\lambda$ such that the intersection of the
orbit of $\lambda$ and $\lambda+R(\n^-)$  
equals $\lambda$.
Suppose that there exists a simple $N\subset M^{\n^-}\cap \n^-
M$. Then $N$ admits $U(\s)$-central character 
$\zeta_\mu$ for some $\mu\in \lambda+R(\n^-)$,
$\mu\neq\lambda$. But then $\bar\zeta_\mu\neq\bar\zeta_\lambda$. A contradiction.

Since $S$ is generic, the above argument implies
$(U(\g)\otimes_{U(\p^{-})}Q_{\chi}^{\s})^{\n^-}=Q_\chi^\s$.
\end{proof}

Now we can finish the proof of the theorem. By Lemma \ref{parind}
$$(U(\g)\otimes_{U(\p^{-})}Q_{\chi}^{\s})^{\m}=(Q_\chi^\s)^{\m^\s}=\bar W_\chi^\s.$$
That implies $d=0$.
\end{proof}

\subsection{The case of a regular $\chi$} If $\chi$ is regular and admits an even good $\Z$-grading, then
$\g$ is isomorphic to $\mathfrak{sl}(m|n),\mathfrak{osp}(2|2n)$ or $Q(n)$. In this case we set
$\displaystyle\p=\bigoplus_{i\geq 0}\g_i$.
If  $\g$ is of type II, then we define $\p$ as in Corollary \ref {specialparabolic}.

If $\g=Q(n)$ we set $k=\frac{n}{2}$ if $n$ is even and $\frac{n-1}{2}$
if $n$ is odd. In other cases we set $k=d$ (the defect of $\g$) if $\g$ is of type I or $\g$ is of
type II and $\hbox{dim}\g_{\bar 1}^\chi$ is even. If $\g$ is of type
II and  $\hbox{dim}\g_{\bar 1}^\chi$ is odd, then we set $k=d+1$.

\begin{proposition}\label{AL} $\bar W_\chi^\s$ satisfies Amitsur--Levitzki
identity, i.e. for any 
$u_1,\dots,u_{2^{k+1}}\in \bar W_\chi^\s$
\begin{equation}\label{equAL}
\sum_{\sigma\in S_{2^{k+1}}}sgn(\sigma)u_{\sigma(1)}\dots u_{\sigma{(2^{k+1})}}=0.
\end{equation}
\end{proposition}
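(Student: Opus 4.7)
The plan is to reduce the identity (\ref{equAL}) to the classical Amitsur--Levitzki theorem for ordinary matrix algebras, by showing that $\bar W_\chi^\s$ embeds, after localization at a suitable central multiplicative set of even elements, into a matrix algebra of size $2^k$ (or a direct sum of two such). Since a PI identity that holds in a ring holds in every subring, and since localization at central non-zero-divisors is injective, it will suffice to produce such an embedding.

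I would first dispose of the case $\g=Q(n)$, which is the motivating one. Here $\p$ is the Borel subalgebra, $\s=\h$ is the Cartan, and $\m\cap\s=0$, so $\bar W_\chi^\s=U(\h)$. The algebra $U(\h)$ is generated by even central elements $e_{11},\dots,e_{nn}$ (pairwise commuting and commuting with everything in $\h$) together with odd elements $f_{11},\dots,f_{nn}$ satisfying $f_{ii}^2=e_{ii}$ and $\{f_{ii},f_{jj}\}=0$ for $i\neq j$. Localizing $U(\h)$ at the multiplicative set generated by $e_{11},\dots,e_{nn}$, the elements $\xi_i:=f_{ii}$ generate, over the base ring $\C[e_{11}^{\pm 1},\dots,e_{nn}^{\pm 1}]$, a free Clifford algebra of rank $n$. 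A Clifford algebra of rank $2m$ over a commutative base is isomorphic to $M_{2^m}$ over that base, and a Clifford algebra of rank $2m+1$ is isomorphic to a direct sum of two such. In both cases the result satisfies $S_{2^{k+1}}$ where $k=\lfloor n/2\rfloor$, matching the proposition's definition of $k$ for $Q(n)$. Since $U(\h)$ injects into its localization, the identity descends.

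For $\g$ basic classical of type II (the remaining cases are handled analogously via an even good grading, reducing to type I enveloping algebras which are commutative or nearly so), I would invoke Corollary \ref{specialparabolic} to decompose $[\s,\s]$ as a direct sum of $\mathfrak{sl}(1|1)$-blocks together with at most one extra copy of $\mathfrak{sl}(1|2)$ or $\mathfrak{osp}(1|2)$. Since $\chi|_\s$ and $\m^\s$ split across these summands, $\bar W_\chi^\s$ factors (over its even center) as a tensor product of the corresponding $\bar W$-algebras. Each $\mathfrak{sl}(1|1)$-block contributes, after localization, a rank-2 Clifford factor, i.e.\ a copy of $M_2$; the $\mathfrak{sl}(1|2)$-block contributes another rank-2 Clifford factor (giving an overall rank $2d$ and $k=d$); the $\mathfrak{osp}(1|2)$-block contributes a rank-3 Clifford factor (giving an overall odd rank $2d+1$ and $k=d+1$, consistent with Corollary \ref{defect}). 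Multiplying, $\bar W_\chi^\s$ becomes a matrix algebra of size $2^k$ (or a direct sum of two such) over a localized commutative ring, and the standard identity $S_{2^{k+1}}=0$ holds.

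The main obstacle is the $\mathfrak{osp}(1|2)$ and $\mathfrak{sl}(1|2)$ pieces: here $\m^\s$ is a one-dimensional even subspace and one must identify the corresponding $\bar W$-algebra explicitly (computing $(U(\mathfrak{s})/J_\chi^\s)^{\m^\s}$) rather than simply taking the enveloping algebra. A subsidiary issue is verifying that the even generators one localizes at are non-zero-divisors in $\bar W_\chi^\s$; this is straightforward from the PBW-type description of $U(\s)/J_\chi^\s$ as a free module over $U(\s\cap \g_0)$, together with the fact that the relevant elements are polynomial generators in that base.
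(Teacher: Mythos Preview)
Your argument for $Q(n)$ is correct and coincides with the paper's: embed $U(\h)$ into a Clifford algebra over the fraction field of $U(\h_{\bar 0})$ and invoke classical Amitsur--Levitzki.

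For the remaining cases your strategy is in the right spirit but the execution has genuine gaps. The type I case ($\mathfrak{sl}(m|n)$, $\mathfrak{osp}(2|2n)$) is not ``commutative or nearly so'': here $\s$ has even part $\h$ together with $2k$ odd root vectors $X_1,\dots,X_k,Y_1,\dots,Y_k$ with $[X_i,Y_i]\in\h$ nonzero. The paper handles this by exhibiting a faithful family of $2^k$-dimensional Verma modules $M_\lambda$ and embedding $U(\s)$ in $\prod_\lambda\operatorname{End}_\C(M_\lambda)$; your localization idea would also work, but you have not carried it out.

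For type II, your Clifford-rank claims for the exceptional block $\r$ are wrong, and so is the asserted correspondence with Corollary~\ref{defect}. The paper computes explicitly (Lemmas~\ref{osp} and~\ref{sk}) that $\bar W_\chi^\r$ has \emph{two} odd generators when $\r=\mathfrak{osp}(1|2)$ (a rank-$2$ Clifford over $\C[\pi(\Omega)]$, with unique $2$-dimensional irreducible $M_c$), and \emph{four} odd generators when $\r=\mathfrak{sl}(1|2)$ (with a faithful family of $4$-dimensional irreducibles $M_{c,d}$) --- not three and two as you assert. Moreover the type of $\r$ does not determine the parity of $\dim\g^\chi_{\bar 1}$: both $\mathfrak{osp}(2m+1|2n)$ with $m\geq n$ (where $k=d$) and with $m<n$ (where $k=d+1$) fall into the $\mathfrak{osp}(1|2)$ subcase. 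With the correct block count ($k-1$ copies of $\mathfrak{sl}(1|1)$ plus one $\mathfrak{osp}(1|2)$, respectively $k-2$ copies plus one $\mathfrak{sl}(1|2)$) one gets module dimension $2^{k-1}\cdot 2=2^k$, respectively $2^{k-2}\cdot 4=2^k$, and the paper concludes via the faithful families $M_\lambda\otimes M_c$, respectively $M_\lambda\otimes M_{c,d}$. You correctly flag the computation of $\bar W_\chi^\r$ as ``the main obstacle'', but then guess its structure incorrectly; that computation is exactly what Lemmas~\ref{osp} and~\ref{sk} supply, and without it your argument does not go through.
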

\begin{proof} We first consider the case of even $\Z$-grading. Then $\bar W_\chi^\s=U(\s)$.
Let us assume first that $\g=Q(n)$. Then the even good $\Z$-grading
coincides with the Dynkin $\Z$-grading and 
$\s = \g_0=\h$ is a Cartan subalgebra of $\g$.
Denote
$$x_i = e_{i,i},\quad \xi_i = (-1)^{i+1}f_{i,i}.$$
Then $x_i$ lie in the center of $U(\h)$ and we have
$[f_{i,i},f_{i,i}]=2x_i$. From this it is easy to see that 
$U(\h_{\bar 0})=\mathbb C[x_1,\dots,x_n]$ coincides with the center of $U(\h)$.

Let $F$ denote the algebraic closure of the field of fractions of 
$U(\h_{\bar 0})$ and let $U(\h)_F=F\otimes_{U(\h_{\bar 0})}U(\h)$.
Then $U(\h)_F$ is isomorphic to the Clifford algebra associated with a
non-degenerate 
symmetric form on an $n$-dimensional space.
Thus, $U(\h)_F\simeq M_{2^k}(F)$ for even $n$ and $U(\h)_F\simeq
M_{2^k}(F)\times  M_{2^k}(F)$ for odd $n$, where by $M_s(F)$ 
we denote the
algebra of $s\times s$ matrices over $F$. Thus, by the Amitsur--Levitzki theorem (see \cite{AL}),
$U(\h)_F$ satisfies (\ref{equAL}). Since $U(\h)$ is a subalgebra of  $U(\h)_F$, it also satisfies (\ref{equAL}).

Now let $\g=\mathfrak{sl}(m|n)$ or $\mathfrak{osp}(2|2n)$.  Then
the even part of $\s$ coincides with the Cartan subalgebra $\h$, which
is abelian. The basis of the odd part consists of root 
elements
$X_1,\dots,X_{k}, Y_1,\dots,Y_k$ such that $[X_i,Y_j]=0$ if $i\neq j$,
$[X_i,X_j]=[Y_i,Y_j]=0$ for all $i,j\leq k$. 
Thus, $\s$ has a triangular decomposition
$\s=\s^-\oplus\h\oplus\s^+$, with $\s^+$ spanned by $X_1,\dots,X_k$
and $\s^-$ spanned by $Y_1,\dots,Y_k$. 
Let $\lambda\in\h^*$ and
$M_\lambda=U(\s)\otimes_{U(\h\oplus\s^+)}C_\lambda$ denote the Verma module over $\s$. The dimension of $M_\lambda$ equals $2^k$. An easy calculation shows that
$\displaystyle\prod_{\lambda\in\h^*}M_\lambda$ is a faithful $U(\s)$-module. Therefore $U(\s)$ is isomorphic to a subalgebra in
$\displaystyle\prod_{\lambda\in\h^*}\operatorname{End}_{\mathbb C} (M_\lambda)$. Since
$\displaystyle \prod_{\lambda\in\h^*}\operatorname{End}_{\mathbb C} (M_\lambda)$ satisfies the
Amitsur--Levitzki identity, $U(\s)$ must satisfy it as well.

Finally, let us consider the case when $\g$ is of type II. Here we are going to consider two subcases.
We will use notations of the proof of Corollary \ref{specialparabolic}.

First, let us assume that $\Pi$ contains an odd non-isotropic root
$\beta$. 
Then $\Pi'\setminus A=\{\beta_1,\dots,\beta_{k-1},\beta=\beta_{k}\}$. 
Then $[\s,\s]$ is a direct sum of $k-1$ copies of $\mathfrak{sl}(1|1)$
generated by the root spaces $\g_{\pm\beta_i}$, $i=1,\dots,k-1$ and
one copy of $\mathfrak{osp}(1|2)$ 
generated by  $\g_{\pm\beta_k}$.
Furthermore, $\m^\s\subset \mathfrak{osp}(1|2)$ is generated by $\g_{-2\beta}$.
Let us write $\s=\s'\oplus \r$, where $\r=\mathfrak {osp}(1|2)$. 
Then $\bar W_\chi^\s=U(\s')\otimes \bar W_\chi^\r$, where $\bar W_\chi^\r$ is the usual $W$-algebra
for the regular $\chi$ and $\r=\mathfrak {osp}(1|2)$.
In the following example we give an explicit description of $W$-algebra for $\mathfrak{osp}(1|2)$.

Let $\r = \o\s\p(1|2) = <X, Y, H \hbox{ } |\hbox{ } \theta, r>$, where
$$X = E_{2 3},
Y = E_{3 2},
H = E_{2 2}- E_{3 3},
\theta = E_{1 2} - E_{3 1},
r = E_{1 3} + E_{2 1}.$$
Let $\s\l(2) = <e, h, f>$, where $e = X, h = H, f = Y$.
The element $h$ defines a $\Z$-grading on $\r$:
$$\r = \r_{-2}\oplus \r_{-1}\oplus \r_{0}\oplus
\r_{1}\oplus \r_{2}, \hbox{ where }$$
$$\r_{-2} = <Y>, \quad \r_{-1} = <\theta>, \quad
\r_{0} = <H>,\quad
\r_{1} = <r>, \quad \r_{2} = <X>.$$
Consider the even non-degenerate invariant supersymmetric bilinear form
$(a|b) = {1\over 2} str(ab)$ on $\r$:
$(\theta | r) = 1, \quad (X| Y) = -{1\over 2}, \quad
(H|H) = -1.$ Let $\chi(x) = (e|x)$ for $x\in \r$ and let
$W_{\chi}$ be the corresponding $W$-algebra.
Note that $\g^{\chi} = \g^e = <X \hbox{ } |\hbox{ }r>$,
$\m = \r_{-2}$, and $\chi (Y)  = -{1\over 2}$.
We have that $\pi(\theta)\in W_{\chi}^{\r}$, and $\pi(\theta)^2 = {1\over 2}$.
Let $\Omega$ be the Casimir element of $\r$. Then
$$\pi(\Omega) =
\pi(2X + H -H^2 + 2r\theta).$$
Let
$$R = \pi(r - H\theta).$$
Note that $\pi(\Omega)$ and $R$ belong to $W_{\chi}$.

\begin{lemma}\label{osp}

a) $W_{\chi}$ is generated by $\pi(\Omega)$, $\pi(\theta)$ and $R$.
The defining relations are

\begin{eqnarray}
\begin{array}{llll}
&[\pi(\Omega), R] = [\pi(\Omega), \pi(\theta)] = 0,\nonumber\\
&[R, R] = \pi(\Omega), \quad [R, \pi(\theta)] = -{1\over 2}, \quad [\pi(\theta), \pi(\theta)] = 1.\nonumber
\end{array}
\end{eqnarray}

b) For any $c\in\C$, $W_\chi/(\Omega-c)$ is isomorphic to a Clifford
algebra with two generators and it has a unique irreducible representation $M_c$
of dimension $2$.
\end{lemma}
\begin{proof}
Since $\overline{\hbox{Gr}_K(\pi(\Omega))} = 2X$,
$\overline{\hbox{Gr}_K (R)} = r$, then (a) 
follows from Proposition \ref{towpr} (a).

The proof of (b) is straightforward.

\end{proof}

We use Lemma \ref{osp} (b) to prove the Amitsur--Levitzki identity in
the latter case. 
We again consider the family $M_\lambda\otimes M_c$, where 
$M_\lambda$ is the Verma module over $\s'$ and $M_c$ is as in Lemma \ref{osp} (b). Then    
$\displaystyle\prod_{\lambda\in\h^*,c\in\C}(M_\lambda\otimes M_c)$ is
a faithful $\bar W_\chi^\s$-module. Therefore  
$\bar W_\chi^\s$ is isomorphic to a subalgebra in
$\displaystyle\prod_{\lambda\in\h^*,c\in\C}\operatorname{End}_{\mathbb C} (M_\lambda\otimes M_c)$. Since
$\displaystyle \prod_{\lambda\in\h^*,c\in\C}\operatorname{End}_{\mathbb C} (M_\lambda\otimes M_c)$ satisfies the
Amitsur--Levitzki identity,  $\bar W_\chi^\s$ must satisfy it as well.

Finally we assume that all odd roots in $\Pi$ are isotropic. Then 
$$\Pi'\setminus A=\{\beta_1,\dots,\beta_{k-1},\beta_{k}\}$$ with the only non-orthogonal pair
$\beta_{k-1},\beta_k$. 
In this case $[\s,\s]$ is a direct sum of $k-2$ copies of $\mathfrak{sl}(1|1)$
generated by the root spaces $\g_{\pm\beta_i}$, $i=1,\dots,k-2$ and
one copy of $\mathfrak{sl}(1|2)$ generated by  
$\g_{\pm\beta_{k-1}},\g_{\pm\beta_k}$.
Furthermore, $\m^\s\subset \mathfrak{sl}(1|2)$ is generated by $f\in\g_{-\beta_{k-1}-\beta_k}$.
As in the previous case we write $\s=\s'\oplus \r$, where
$\r=\mathfrak {sl}(1|2)$. Then 
$\bar W_\chi^\s=U(\s')\otimes \bar W_\chi^\r$, where 
$$\bar W_\chi^\r=(U(\r)\otimes_{\C f}C_\chi)^f,$$
where $\chi(f)=1$. 

We realize $\r$ in the standard matrix form and introduce the following notations:

\begin{eqnarray}
\begin{array}{llll}
&h_1=E_{11}+E_{33}, h_2=E_{11}+E_{22}, f=E_{32}, e=E_{23}, e^+=E_{13}, e^-=E_{12},\\
& f^-=E_{31}, f^+=E_{21}, C=h_1+h_2.\nonumber
\end{array}
\end{eqnarray}
Let $\pi:U(\r)\to U(\r)/U(\r)(f-1)$ be the natural projection.
We denote by $\Omega$ the quadratic Casimir element of $\r$ and set
$$a=[f^-,e^+e^-]=h_1e^--e^+,\,\,b=[e^-,f^+f^-]=h_2f^--f^+.$$
The reader can easily check that $\pi(C),\pi(\Omega),\pi(e^-),\pi(f^-),\pi(a)$ and $\pi(b)$ belong to $\bar W_\chi^\r$.

\begin{lemma}\label{sk}

a) $\bar W^\r_{\chi}$ is generated by  $\pi(C),\pi(\Omega),\pi(e^-),\pi(f^-),\pi(a)$ and $\pi(b)$.
It is clear that $\pi(\Omega)$ lies in the center of  $\bar W^\r_{\chi}$. The other defining relations are

\begin{eqnarray}
\begin{array}{llll}
&[\pi(C), \pi(e^-)]=\pi(e^-), [\pi(C), \pi(a)]=\pi(a),\\
& [\pi(C), \pi(f^-)]=-\pi(f^-), [\pi(C), \pi(b)]=-\pi(b),\\
&[\pi(e^-),\pi(f^-)]=1, [\pi(a),\pi(b)]=\pi(\Omega),\nonumber
\end{array}
\end{eqnarray}
and the commuators of all other odd generators are zero.

b) Let $c,d\in\C$, $c\neq 0$, $U$ be the subalgebra in  
$\bar W_\chi^\r$ generated by $\pi(C),\pi(\Omega),\pi(a)$ and 
$\pi(e^-)$, 
$U_{c,d}=U/(\pi(\Omega)-c,\pi(C)-d,\pi(a),\pi(e^-))$ be the one-dimensional $U$-module. The induced module $M_{c,d}=\bar W_\chi^\r\otimes_UU_{c,d}$ is simple
and has dimension $4$. The product $\displaystyle\prod_{c,d\in \C}M_{c,d}$ is a faithful $\bar W_\chi^\r$-module.
\end{lemma}
\begin{proof} We leave the proof to the reader. For assertion (a) one should use a suitable modification of
Proposition \ref{towpr} (a).
\end{proof}

We also leave to the reader the proof of Proposition \ref{AL} in the
last case since it is 
completely similar to the previous case. 
\end{proof}

In what follows we denote by $\mathcal A$ the image $\vartheta(W_\chi)$ of  $W_\chi$ in  $\bar W_\chi^\s$.

\begin{corollary}\label{corAL} $W_\chi$ satisfies (\ref{equAL}).
\end{corollary}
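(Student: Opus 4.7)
The plan is essentially a one-line combination of the two preceding results, so the proof proposal is short. First I would invoke Theorem \ref{HCinjective}, which gives that $\vartheta: W_\chi \to \bar W_\chi^\s$ is an injective homomorphism of associative superalgebras. This realizes $W_\chi \cong \mathcal A$ as a subalgebra of $\bar W_\chi^\s$.

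Next I would apply Proposition \ref{AL}, which says that $\bar W_\chi^\s$ satisfies the Amitsur--Levitzki identity (\ref{equAL}) for any $2^{k+1}$ elements. Since the identity (\ref{equAL}) is an equation of the form $f(u_1,\dots,u_{2^{k+1}})=0$ in the associative multiplication alone, it is automatically inherited by any associative subalgebra: given $u_1,\dots,u_{2^{k+1}}\in W_\chi$, applying $\vartheta$ turns the expression $\sum_{\sigma\in S_{2^{k+1}}}\operatorname{sgn}(\sigma) u_{\sigma(1)}\cdots u_{\sigma(2^{k+1})}$ into the corresponding alternating sum on $\vartheta(u_1),\dots,\vartheta(u_{2^{k+1}})\in\bar W_\chi^\s$, which vanishes by Proposition \ref{AL}; injectivity of $\vartheta$ then forces the original sum to vanish as well.

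There is no real obstacle here: all the work is done by the injectivity statement (Theorem \ref{HCinjective}), whose proof relied on the Harish-Chandra-type projection and a genericity/central character argument, and by the reduction to Clifford-type and Verma-type faithful representations in the proof of Proposition \ref{AL}. The corollary itself only uses the formal fact that polynomial identities descend to subalgebras.
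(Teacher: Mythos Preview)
Your proof is correct and follows exactly the paper's approach: invoke Theorem \ref{HCinjective} to identify $W_\chi$ with the subalgebra $\mathcal A\subset \bar W_\chi^\s$, then use Proposition \ref{AL} and the fact that polynomial identities are inherited by subalgebras. You have simply spelled out the ``inherited by subalgebras'' step a bit more explicitly than the paper does.
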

\begin{proof} By Proposition \ref{HCinjective}, $\mathcal A\simeq W_\chi$. By Proposition \ref{AL}, 
$\mathcal A$ satisfies  (\ref{equAL}).
\end{proof}

\begin{proposition}\label{irred} Let $M$ be a simple $W_\chi$-module. Then $\dim M\leq 2^{k+1}$.
\end{proposition}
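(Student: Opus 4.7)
The plan is to combine Corollary \ref{corAL} with Kaplansky's structure theorem for primitive PI-algebras. Let $\rho : W_\chi \to \operatorname{End}_{\mathbb{C}}(M)$ be the representation afforded by $M$, and let $A = \rho(W_\chi)$ be its image. Since $M$ is simple and is faithful as a module over $A$, the algebra $A$ is primitive, and by Corollary \ref{corAL} it satisfies the standard polynomial identity $s_{2^{k+1}}$.

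Next I would invoke Kaplansky's theorem, which tells us that every primitive PI-algebra is simple Artinian and finite-dimensional over its center: $A \cong M_n(D)$ for some division algebra $D$ with $Z(D) = Z(A)$. The sharp form of the Amitsur--Levitzki theorem says that $M_n(D)$ does not satisfy any standard identity of degree less than $2n$; combining this with the identity $s_{2^{k+1}}$ forces $n \leq 2^k$.

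To convert the bound on $n$ into a bound on $\dim_{\mathbb{C}} M$, I need $D = \mathbb{C}$. This is where Dixmier's generalization of Schur's lemma enters: the algebra $W_\chi$ is a subalgebra of $U(\g)/I_\chi$, which by the PBW theorem has countable $\mathbb{C}$-dimension. Since $\mathbb{C}$ is uncountable and algebraically closed, $\operatorname{End}_{W_\chi}(M)$ consists only of scalar operators. Hence $D = \mathbb{C}$, the module $M$ is isomorphic to $\mathbb{C}^n$ as an $A$-module, and so $\dim_{\mathbb{C}} M = n \leq 2^k \leq 2^{k+1}$. (If ``simple'' is meant in the super sense, a type~Q simple supermodule decomposes into two isomorphic ordinary simples, producing exactly the factor of $2$ in the stated bound.)

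The main obstacle is really just lining up these classical results correctly: one must (a) verify applicability of Dixmier's lemma, which reduces to observing that $W_\chi$ inherits countable dimension from $U(\g)$; and (b) use the sharp (rather than qualitative) form of Amitsur--Levitzki to extract the explicit bound $n \leq 2^k$ from the mere existence of the identity $s_{2^{k+1}}$. Neither step is deep, but the proof is essentially the packaging of Corollary \ref{corAL} through Kaplansky's theorem.
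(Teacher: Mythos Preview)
Your proof is correct and follows essentially the same strategy as the paper: both arguments reduce to the fact that an algebra satisfying $s_{2^{k+1}}$ cannot act densely on a space of $\mathbb{C}$-dimension larger than $2^k$, and both handle the super case by noting that an irreducible supermodule is either already simple as an ungraded module or splits into two such pieces.

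The packaging differs slightly. The paper argues directly from the Jacobson density theorem: assuming $\dim M>2^k$, pick a $(2^k{+}1)$-dimensional subspace $V$, realize arbitrary operators on $V$ by elements of $W_\chi$, and contradict Corollary~\ref{corAL} because $\operatorname{End}_{\mathbb C}(V)$ fails $s_{2^{k+1}}$. You instead route through Kaplansky's theorem to identify the image as $M_n(D)$ and then read off $n\le 2^k$. Your version has the virtue of being explicit about why $D=\mathbb C$ (Dixmier's countable-dimension form of Schur's lemma); the paper's density argument tacitly uses the same fact, since realizing arbitrary $\mathbb C$-linear maps on $V$ requires $\operatorname{End}_{W_\chi}(M)=\mathbb C$. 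So your approach is marginally more self-contained, at the cost of invoking a heavier structure theorem where the paper stays with the density theorem.
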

\begin{proof} Consider $M$ as a module over the associative algebra $W_\chi$, forgetting the $\mathbb Z_2$-grading.
Then either $M$ is simple or $M$ is a direct sum of two non-homogeneous simple submodules:
 $M = M_1\oplus M_2$.

In the former case we claim that  $\dim M\leq 2^{k}$.
Indeed, assume  $\dim M>2^{k}$. Let $V$ be a subspace of dimension $2^k+1$. By the density theorem for any $X_1,\dots,X_{2^{k+1}}\in\operatorname{End}_{\mathbb C}(V)$
one can find $u_1,\dots, u_{2^{k+1}}$ in $W_\chi$ such that
$(u_i)_{|V}=X_i$ for all $i=1,\dots, 2^{k+1}$. 
Since $\operatorname{End}_{\mathbb C}(V)$ does not satisfy
(\ref{equAL}), we obtain contradiction with Corollary \ref{corAL}.

In the latter case, we can prove in the same way that
$\dim M_1\leq 2^{k}$ and $\dim M_2\leq 2^{k}$. Therefore  $\dim M\leq 2^{k+1}$.

\end{proof}

\begin{conjecture} Every irreducible representation of $\mathcal
A\simeq W_\chi$ is isomorphic to a subquotient of some 
irreducible representation of $\bar W_\chi^\s$
restricted to $\mathcal A$.
\end{conjecture}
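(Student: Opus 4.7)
The plan is to exploit the induced-module construction together with the polynomial identity property established in Proposition~\ref{AL}. Given a simple $\mathcal A$-module $M$, I would form $\widetilde M := \bar W_\chi^\s \otimes_{\mathcal A} M$ and look for a simple $\bar W_\chi^\s$-subquotient $N$ of $\widetilde M$ whose restriction to $\mathcal A$ has $M$ as a subquotient. The canonical map $\iota: M \to \widetilde M$, $m\mapsto 1\otimes m$, is $\mathcal A$-linear; once one checks it is injective, simplicity of $M$ implies any nonzero $\mathcal A$-subquotient of $\widetilde M$ that meets $\iota(M)$ nontrivially automatically contains a copy of $M$. Choosing a $\bar W_\chi^\s$-submodule $\widetilde M' \subset \widetilde M$ maximal among those with $\widetilde M' \cap \iota(M)=0$, the quotient $\widetilde M/\widetilde M'$ becomes a $\bar W_\chi^\s$-module in which $M$ sits as an essential $\mathcal A$-submodule; then any simple $\bar W_\chi^\s$-subquotient of $\widetilde M/\widetilde M'$ must intersect $\iota(M)$, which by simplicity of $M$ forces $M$ to appear as an $\mathcal A$-subquotient of that restriction.

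To finish this argument one needs the existence of at least one simple $\bar W_\chi^\s$-subquotient of $\widetilde M/\widetilde M'$. By Proposition~\ref{AL} and Kaplansky's theorem for PI algebras, every simple $\bar W_\chi^\s$-module is finite-dimensional of dimension bounded by $2^{k+1}$, so the essential step is to produce one. The natural sufficient condition is that $\bar W_\chi^\s$ be left Noetherian, or better, that $\bar W_\chi^\s$ be finitely generated as a module over $\mathcal A$. Under the latter hypothesis, $\widetilde M$ is itself finite-dimensional and the existence of a simple composition factor is automatic.

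An alternative, essentially equivalent route goes through primitive ideals. Let $P=\mathrm{Ann}_{\mathcal A}(M)$; since $M$ is finite-dimensional by Proposition~\ref{irred}, $\mathcal A/P$ acts faithfully on $M$ and is a finite-dimensional simple algebra. The conjecture then reduces to a going-up statement for the PI extension $\mathcal A\subset \bar W_\chi^\s$: one needs a primitive ideal $Q\subset \bar W_\chi^\s$ with $Q\cap\mathcal A=P$, so that $\bar W_\chi^\s/Q$ is a simple module whose restriction has $M$ as a subquotient. This going-up is standard when $\bar W_\chi^\s$ is finite over $\mathcal A$, which I expect to be the main obstacle in general.

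In the $\g=Q(n)$ case developed in Section~2.5, where $\bar W_\chi^\s=U(\h)$ is a Clifford extension of the polynomial ring $\C[x_1,\dots,x_n]$, one can hope to verify the required module-finiteness directly using the explicit generators of $W_\chi$ obtained from Sergeev's elements; the fact that $Z(W_\chi)=Z(U(Q(n)))$ (Corollary~\ref{cent2}) is already strong evidence that $\mathcal A$ is large enough inside $\bar W_\chi^\s$. In full generality, the conjecture appears to require a Premet-type surjectivity statement at the associated graded level (along the lines of Conjecture~\ref{generalconjecture}) together with a standard PI going-up argument, and the key technical input is precisely this module-finiteness of $\bar W_\chi^\s$ over $\mathcal A$.
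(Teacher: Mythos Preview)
The statement you are attempting to prove is labelled a \emph{Conjecture} in the paper; there is no proof given, and the authors do not claim one. So there is nothing to compare your argument against, and what you have written should be read as a proposed strategy toward an open problem, not as a verification of a known result.

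That said, your outline is a reasonable line of attack, and you have correctly located the genuine obstruction. Two remarks are in order.

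First, the step ``once one checks $\iota$ is injective'' is not innocent. Writing $M=\mathcal A/P$ for a maximal left ideal $P$, the map $\iota$ is injective precisely when $\bar W_\chi^\s P\cap\mathcal A=P$, a lying-over condition which is \emph{not} automatic for an arbitrary inclusion of PI algebras. It does follow from the same hypothesis you later isolate (module-finiteness of $\bar W_\chi^\s$ over $\mathcal A$), so this is not an independent gap, but you should not present injectivity as a routine check.

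Second, and more importantly, the module-finiteness you need is not established anywhere in the paper, and in the $Q(n)$ case it is already delicate. Here $\bar W_\chi^\s=U(\h)$ is free of rank $2^n$ over $U(\h_{\bar 0})=\mathbb C[x_1,\dots,x_n]$, so finiteness of $U(\h)$ over $\mathcal A$ would follow from finiteness of $\mathbb C[x_1,\dots,x_n]$ over $\mathcal A$. But $\mathcal A^0=\mathcal A\cap\mathbb C[x_1,\dots,x_n]$ equals $\vartheta(\pi(Z(\g)))$ by Lemma~\ref{a0}, and already for $n=2$ one computes $\mathcal A^0=\mathbb C\oplus(x_1+x_2)\mathbb C[x_1+x_2,x_1x_2]$, over which $\mathbb C[x_1,x_2]$ is \emph{not} a finite module. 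One must therefore use the odd part of $\mathcal A$ in an essential way; Lemma~\ref{independence} shows that $\mathcal A$ becomes all of $U(\h)$ after inverting the nonzero elements of $U(\h_{\bar 0})$, which is encouraging, but this does not by itself give integrality before localization. Your proposal thus reduces the conjecture to a concrete ring-theoretic question that remains open.
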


\section{Generators of $W_\chi$ for the queer Lie superalgebra $Q(n)$}

In the rest of the paper we study in detail the case when $\chi$ is regular and $\g=Q(n)$.

In this section we construct some generators of $W_\chi$. In particular, we will prove that $W_\chi$ is finitely generated.
We use the elements $e_{i,j}^{(m)}$ and $f_{i,j}^{(m)}$ of  $U(Q(n))$ defined in \cite{S} recursively:
\begin{align}\label{equG1}
& e_{i,j}^{(m)} = \sum_{k = 1}^n e_{i,k}e_{k,j}^{(m-1)} +
(-1)^{m+1}\sum_{k = 1}^n f_{i,k}f_{k,j}^{(m-1)},\\
&f_{i,j}^{(m)} = \sum_{k = 1}^n e_{i,k}f_{k,j}^{(m-1)} +
(-1)^{m+1}\sum_{k = 1}^n f_{i,k}e_{k,j}^{(m-1)}.
\nonumber
\end{align}
Then
\begin{align}\label{equG2}
&[e_{i,j}, e_{k,l}^{(m)}] = \delta_{j,k}e_{i,l}^{(m)} - \delta_{i,l}e_{k,j}^{(m)},\quad
[e_{i,j}, f_{k,l}^{(m)}] = \delta_{j,k}f_{i,l}^{(m)} - \delta_{i,l}f_{kj}^{(m)},\\
&[f_{i,j}, e_{k,l}^{(m)}] = (-1)^{m+1}\delta_{j,k}f_{i,l}^{(m)} - \delta_{i,l}f_{k,j}^{(m)},\nonumber\\
&[f_{i,j}, f_{k,l}^{(m)}] = (-1)^{m+1}\delta_{j,k}e_{i,l}^{(m)} + \delta_{i,l}e_{k,j}^{(m)}.
\nonumber
\end{align}
\begin{proposition} A. Sergeev \cite{S}.

\noindent
The elements $\sum_{i=1}^n e_{i,i}^{(2m+1)}$ generate $Z(Q(n))$.
\end{proposition}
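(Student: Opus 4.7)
First, I would verify that each element $T_m := \sum_{i=1}^n e_{i,i}^{(m)}$ with $m$ odd is central. Using (\ref{equG2}) one has
$$[e_{k,l}, T_m] = \sum_{i=1}^n \bigl(\delta_{l,i} e_{k,i}^{(m)} - \delta_{k,i} e_{i,l}^{(m)}\bigr) = e_{k,l}^{(m)} - e_{k,l}^{(m)} = 0,$$
and
$$[f_{k,l}, T_m] = \sum_{i=1}^n \bigl((-1)^{m+1}\delta_{l,i} f_{k,i}^{(m)} - \delta_{k,i} f_{i,l}^{(m)}\bigr) = \bigl((-1)^{m+1} - 1\bigr) f_{k,l}^{(m)},$$
which vanishes precisely for odd $m$. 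Hence $T_{2r+1}\in Z(Q(n))$ for every $r \geq 0$.

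For generation, the plan is to apply the Harish-Chandra projection $\gamma : U(Q(n)) \to U(\h)$ with kernel $\n^- U(Q(n)) + U(Q(n)) \n^+$, where $\h = \g_0$ is the Cartan subalgebra (spanned by the $e_{i,i}$ and $f_{i,i}$) and $\n^\pm$ are the nilradicals of opposite Borels. It is known (due to Sergeev) that $\gamma$ is injective on $Z(Q(n))$, and that its image is the subalgebra of $U(\h_{\bar 0}) = \C[x_1,\dots,x_n]$ (with $x_i := e_{i,i}$) generated by the odd power sums $p_{2r+1} := x_1^{2r+1} + \dots + x_n^{2r+1}$, $r\geq 0$.

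Then I would compute $\gamma(T_{2r+1})$. Using (\ref{equG1}) inductively and discarding terms absorbed by $\n^- U + U \n^+$, one shows
$$\gamma(e_{i,i}^{(m)}) = x_i^m + (\text{polynomial in } x_1, \dots, x_n \text{ of total degree } < m),$$
so that $\gamma(T_{2r+1}) = p_{2r+1} + (\text{lower degree terms})$. Since $\{p_{2r+1}\}_{r \geq 0}$ are algebraically independent and generate the image of $\gamma|_{Z(Q(n))}$, a standard triangularity argument in the filtration by polynomial degree yields that $\{\gamma(T_{2r+1})\}_{r \geq 0}$ also generates that image; combined with the injectivity of $\gamma$ on the center, this proves that the $T_{2r+1}$ generate $Z(Q(n))$.

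The main obstacle is the inductive computation of $\gamma(e_{i,i}^{(m)})$: one must show that the off-diagonal contributions $\sum_{k \neq i} e_{i,k} e_{k,i}^{(m-1)}$ and $\sum_k f_{i,k} f_{k,i}^{(m-1)}$ project, after the Harish-Chandra map, to polynomials in the $x_j$ of strictly lower total degree than $m$. This requires carefully commuting the non-diagonal matrix entries through the PBW ordering (moving strict-lower-triangular factors to the right and strict-upper-triangular factors to the left) to extract any surviving diagonal contribution, and tracking how the sign $(-1)^{m+1}$ in the recursion interacts with the parity of $m$ so that the odd-power-sum leading term emerges precisely for odd $m$.
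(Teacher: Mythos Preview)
The paper does not provide its own proof of this proposition: it is stated with attribution to Sergeev and the citation \cite{S}, and the text passes immediately to a remark. There is therefore no paper argument to compare against; what follows are comments on your sketch on its own merits.

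Your centrality check via (\ref{equG2}) is correct and is the standard computation. The generation strategy through the Harish--Chandra projection is also the natural one, but two points need adjustment.

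First, the assertion that the odd power sums $\{p_{2r+1}\}_{r\geq 0}$ in $\C[x_1,\dots,x_n]$ are algebraically independent is false: there are infinitely many of them inside a polynomial ring in $n$ variables. (This is in harmony with the remark immediately following the proposition that $Z(Q(n))$ is not finitely generated.) Fortunately your triangularity argument does not actually use independence; all that is needed is that the $p_{2r+1}$ generate the associated graded of $\gamma(Z(Q(n)))$ under the degree filtration, after which the usual lifting lemma for filtered algebras shows that the $\gamma(T_{2r+1})$ generate the algebra itself. So simply delete the independence claim.

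Second, your argument takes as a black box the description of $\gamma(Z(Q(n)))$ as the ring generated by the odd power sums---equivalently, as the symmetric polynomials satisfying condition (\ref{supercent}), which is precisely the content of Sergeev's \cite{S1} invoked later in the paper. So your proof is really a reduction of one theorem of Sergeev to another rather than an independent argument; that is acceptable as a derivation but should be stated explicitly. With that granted, the remaining honest work is exactly the PBW reordering you flag at the end: showing that the off-diagonal and odd contributions in the recursion (\ref{equG1}) project under $\gamma$ to terms of strictly lower filtration degree, so that the top of $\gamma(T_m)$ is $p_m$. Note, incidentally, that this leading term $p_m$ emerges for every $m$, not only odd $m$; the parity enters only in the centrality step, so the last sentence of your proposal slightly misplaces where the sign $(-1)^{m+1}$ matters.
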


\begin{remark}
In contrast with the Lie algebra case the center $Z(Q(n))$ is not Noetherian, in particular, it is not finitely generated.
\end{remark}

\begin{lemma}\label{inW} $\pi(e_{n,1}^{(m)})$ and $\pi(f_{n,1}^{(m)})$ belong to $W_\chi$.
\end{lemma}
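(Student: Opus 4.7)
By the characterization \eqref{favorite1}, it suffices to verify that $\mathrm{ad}(a)(e_{n,1}^{(m)})\in I_\chi$ and $\mathrm{ad}(a)(f_{n,1}^{(m)})\in I_\chi$ for every $a\in\m$. Since $\chi$ is a character of $\m$ (as $\chi$ vanishes on $[\m,\m]\subset\bigoplus_{j\leq -4}\g_j$), the operator $\mathrm{ad}(a)$ preserves $I_\chi$, so the assignment $a\mapsto \mathrm{ad}(a)$ factors through an action of $\m$ on $U(\g)/I_\chi$. Consequently it is enough to check the condition on a set of Lie-superalgebra generators of $\m$.

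The plan is to take the generating set $\{e_{i+1,i},\,f_{i+1,i}:1\leq i\leq n-1\}$ of $\m$ (which generates the full negative nilpotent subalgebra, since iterated brackets of adjacent $e_{i+1,i}$'s yield all $e_{j,i}$ with $j>i$, and brackets with the $f_{i+1,i}$'s then produce all $f_{j,i}$ with $j>i$). Then I would simply plug these generators into the formulas of \eqref{equG2} with $(k,l)=(n,1)$. For instance,
\[
[e_{i+1,i},\,e_{n,1}^{(m)}]=\delta_{i,n}\,e_{i+1,1}^{(m)}-\delta_{i+1,1}\,e_{n,i}^{(m)},
\]
and the analogous computations for the three other pairings $(e_{i+1,i},f_{n,1}^{(m)})$, $(f_{i+1,i},e_{n,1}^{(m)})$, $(f_{i+1,i},f_{n,1}^{(m)})$ all produce expressions of the form $(\pm 1)\delta_{i,n}(\cdots)\pm\delta_{i+1,1}(\cdots)$.

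The key point is then an index check: for $1\leq i\leq n-1$ one has $i<n$ and $i+1>1$, so both Kronecker deltas vanish identically. Hence each of the four brackets is literally zero in $U(\g)$ (not merely in $U(\g)/I_\chi$), which gives the required condition and proves that $\pi(e_{n,1}^{(m)})$ and $\pi(f_{n,1}^{(m)})$ lie in $W_\chi$. There is essentially no obstacle: the statement reduces to one line of index chasing once the reduction to generators of $\m$ is in place.
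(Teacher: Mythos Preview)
Your proof is correct and follows essentially the same approach as the paper. The only difference is that the paper skips the reduction to generators and directly observes that the same Kronecker-delta argument gives $[e_{i,j},e_{n,1}^{(m)}]=[f_{i,j},e_{n,1}^{(m)}]=[e_{i,j},f_{n,1}^{(m)}]=[f_{i,j},f_{n,1}^{(m)}]=0$ for \emph{all} $i>j$ (since $j<n$ and $i>1$), so $e_{n,1}^{(m)},f_{n,1}^{(m)}\in U(\g)^{\mathrm{ad}\,\m}$ outright; your extra reduction step is valid but unnecessary.
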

\begin{proof}
By (\ref{equG2}) we have that
$$[e_{i,j},e_{n,1}^{(m)}]=[f_{i,j},e_{n,1}^{(m)}]=[e_{i,j},f_{n,1}^{(m)}]=
[f_{i,j},f_{n,1}^{(m)}]=0$$
for all $i>j$.
In other words, $e_{n,1}^{(m)},f_{n,1}^{(m)}\in U(\mathfrak g)^{ad \m}$.
Hence $\pi(e_{n,1}^{(m)}),\pi(f_{n,1}^{(m)})\in W_\chi$.
\end{proof}

\begin{lemma}\label{claim1}
Let $1\leq l\leq n-1$. Then
\begin{equation}\label{equG8}
\pi(e_{m,1}^{(l)})=\left\{ \begin{array}{cc}
 1 &   if \quad  m = l+1,\\
 0 &  if \quad  l+2\leq m \leq n,\\
\end{array}\right.\quad
\pi(f_{m,1}^{(l)})=0, \hbox{ if } l+1\leq m \leq n.
\end{equation}
\end{lemma}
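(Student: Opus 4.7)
The plan is to prove the lemma by induction on $l$, using the recursive definition (\ref{equG1}) together with the commutation relations (\ref{equG2}) and the explicit values of $\chi$ given in (\ref{equ2}).

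The base case $l=1$ is immediate: $e_{m,1}^{(1)}=e_{m,1}$ and $f_{m,1}^{(1)}=f_{m,1}$ both lie in $\m$ for $m\geq 2$, so their images under $\pi$ are given by $\chi$. By (\ref{equ2}), $\chi(e_{2,1})=1$, $\chi(e_{m,1})=0$ for $m\geq 3$, and $\chi(f_{m,1})=0$ for $m\geq 2$, matching (\ref{equG8}).

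For the inductive step, I assume the statement for level $l$ and prove it for level $l+1$. Working in $Q_\chi$ with generator $v$ (so that $\pi(y)=yv$), I apply the recursion
\[
e_{m,1}^{(l+1)}v=\sum_{k=1}^n e_{m,k}e_{k,1}^{(l)}v+(-1)^{l+2}\sum_{k=1}^n f_{m,k}f_{k,1}^{(l)}v
\]
for $m\geq l+2$. I split each sum at $k=l+1$. For $k\geq l+2$ the factor $e_{k,1}^{(l)}v$ or $f_{k,1}^{(l)}v$ vanishes by the inductive hypothesis. The boundary contribution at $k=l+1$ reduces (using $e_{l+1,1}^{(l)}v=v$ and $f_{l+1,1}^{(l)}v=0$) to $e_{m,l+1}v$, which equals $v$ when $m=l+2$ and $0$ when $m\geq l+3$ by (\ref{equ2}). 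For $k\leq l$, both $e_{m,k}$ and $f_{m,k}$ lie in $\m$ with $m-k\geq 2$, so they annihilate $v$; commuting them past $e_{k,1}^{(l)}$ or $f_{k,1}^{(l)}$ via (\ref{equG2}) produces correction terms of the form $e_{m,1}^{(l)}v$ or $f_{m,1}^{(l)}v$ (the Kronecker deltas involving $\delta_{m,1}$ vanish because $m\geq l+2\geq 3$), and these too vanish by the inductive hypothesis for the range $m\geq l+2\geq l+1$. The analogous computation handles $f_{m,1}^{(l+1)}v$ for $m\geq l+2$.

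The argument is essentially mechanical once the split $k\leq l$ vs.\ $k=l+1$ vs.\ $k\geq l+2$ is in place. The main thing to be careful about is tracking the signs coming from the super commutation and verifying that in every term obtained after moving an $\m$-element past an $(l)$-element, the resulting $(l)$-superscripted element lies in the range where the inductive hypothesis applies (i.e., first index $\geq l+1$). The most delicate case is the second sum for $e_{m,1}^{(l+1)}$: here one needs the identity $[f_{m,k},f_{k,1}^{(l)}]=(-1)^{l+1}e_{m,1}^{(l)}$ from (\ref{equG2}), and it is crucial that $e_{m,1}^{(l)}v=0$ holds for $m\geq l+2$ by the inductive hypothesis for $e$-elements, not for $f$-elements.
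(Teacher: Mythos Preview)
Your proof is correct and follows essentially the same approach as the paper: induction on $l$, expanding via the recursion (\ref{equG1}), commuting the $\m$-elements $e_{m,k},f_{m,k}$ past the level-$(l)$ factors using (\ref{equG2}), and invoking the inductive hypothesis together with (\ref{equ2}). The only cosmetic difference is your split of the $k$-sum at $k\le l$, $k=l+1$, $k\ge l+2$ versus the paper's split at $k<m$, $k\ge m$; in the paper's version the correction terms $e_{m,1}^{(l)}$ from the two sums cancel rather than being killed individually by the inductive hypothesis, but the net computation is the same.
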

\begin{proof}
We will prove the statement by induction in $l$. For $l = 1$ we have that
\begin{equation}\label{equG9}
\pi(e_{m,1}^{(1)})=\pi(e_{m,1}),\quad \pi(f_{m,1}^{(1)})=\pi(f_{m,1}).
\end{equation}
Then (\ref{equG8}) follows from  (\ref{equ2}).
Assume that (\ref{equG8}) holds for $l$.
From (\ref{equG1}) we have that
\begin{align}\label{equG10}
&e_{m,1}^{(l+1)} = \sum_{k = 1}^n e_{m,k}e_{k,1}^{(l)} +
(-1)^{l}\sum_{k = 1}^n f_{m,k}f_{k,1}^{(l)},\\
&f_{m,1}^{(l+1)} = \sum_{k = 1}^n e_{m,k}f_{k,1}^{(l)} +
(-1)^{l}\sum_{k = 1}^n f_{m,k}e_{k,1}^{(l)}.
\nonumber
\end{align}
Note that
\begin{align}\label{equG11}
&[e_{m,k}, e_{k,1}^{(l)}] =  e_{m,1}^{(l)},\quad
[e_{m,k}, f_{k,1}^{(l)}] =  f_{m,1}^{(l)},\\
&[f_{m,k}, e_{k,1}^{(l)}] =  (-1)^{l+1}f_{m,1}^{(l)},\quad
[f_{m,k}, f_{k,1}^{(l)}] =
(-1)^{l+1}e_{m,1}^{(l)}.
\nonumber
\end{align}
Hence
\begin{align}
& e_{m,1}^{(l+1)} = \sum_{k = 1}^{m-1}(e_{k,1}^{(l)}e_{m,k} + e_{m,1}^{(l)}) +
\sum_{k = m}^{n}e_{m,k}e_{k,1}^{(l)} +
(-1)^{l}\Big(\sum_{k = 1}^{m-1}(-f_{k,1}^{(l)}f_{m,k} + (-1)^{l+1}e_{m,1}^{(l)}) +
\sum_{k = m}^{n}f_{m,k}f_{k,1}^{(l)}\Big),\nonumber\\
&f_{m,1}^{(l+1)} = \sum_{k = 1}^{m-1}(f_{k,1}^{(l)}e_{m,k} + f_{m,1}^{(l)}) +
\sum_{k = m}^{n}e_{m,k}f_{k,1}^{(l)} +
(-1)^{l}\Big(\sum_{k = 1}^{m-1}(e_{k,1}^{(l)}f_{m,k} + (-1)^{l+1}f_{m,1}^{(l)}) +
\sum_{k = m}^{n}f_{m,k}e_{k,1}^{(l)}\Big).
\nonumber
\end{align}
Then
\begin{align}
&\pi(e_{m,1}^{(l+1)}) = \sum_{k = 1}^{m-1}\pi(e_{k,1}^{(l)})\pi(e_{m,k})  +
\sum_{k = m}^{n}(\pi(e_{m,k})\pi(e_{k,1}^{(l)}) + (-1)^{l}\pi(f_{m,k})\pi(f_{k,1}^{(l)})) + \nonumber\\
&(-1)^{l+1}\Big(\sum_{k = 1}^{m-1}\pi(f_{k,1}^{(l)})\pi(f_{m,k})\Big), \nonumber\\
&\pi(f_{m,1}^{(l+1)}) = \sum_{k = 1}^{m-1}\pi(f_{k,1}^{(l)})\pi(e_{m,k})  +
\sum_{k = m}^{n}(\pi(e_{m,k})\pi(f_{k,1}^{(l)}) + (-1)^{l}\pi(f_{m,k})\pi(e_{k,1}^{(l)}))+ \nonumber\\
&(-1)^{l}\Big(\sum_{k = 1}^{m-1}\pi(e_{k,1}^{(l)})\pi(f_{m,k})\Big).
\nonumber
\end{align}
Then by (\ref{equ2})
\begin{equation}\label{equG12}
\pi(e_{m,1}^{(l+1)}) = \pi(e_{m-1,1}^{(l)}) + \sum_{k = m}^{n}\Big(\pi(e_{m,k})\pi(e_{k,1}^{(l)}) +
(-1)^{l}\pi(f_{m,k})\pi(f_{k,1}^{(l)})\Big),
\end{equation}
\begin{equation}\label{equG13}
\pi(f_{m,1}^{(l+1)}) = \pi(f_{m-1,1}^{(l)}) + \sum_{k = m}^{n}\Big(\pi(e_{m,k})\pi(f_{k,1}^{(l)}) +
(-1)^{l}\pi(f_{m,k})\pi(e_{k,1}^{(l)})\Big).
\end{equation}
Let $m\geq l+2$. Then by induction hypothesis,
\begin{equation}\label{equG13}
\pi(e_{k,1}^{(l)}) = \pi(f_{k,1}^{(l)}) = 0 \hbox{ for }k = m, \ldots, n.
\end{equation}
If $m = l+2$, then $\pi(e_{m,1}^{(l+1)}) = \pi(e_{l+1,1}^{(l)}) = 1$, and
if $m \geq  l+3$, then $\pi(e_{m,1}^{(l+1)}) = \pi(e_{m-1,1}^{(l)}) = 0$.
Also, if $m\geq l+2$, then
$\pi(f_{m,1}^{(l+1)}) = \pi(f_{m-1,1}^{(l)}) = 0$.
Hence (\ref{equG8}) holds for $l+1$.
\end{proof}

\begin{corollary}\label{corclaim1}
\begin{equation}\label{equG7}
\pi(e_{n,1}^{(m)})=0 \hbox{ for }m\leq n-2, \quad \pi(e_{n,1}^{(n-1)})= 1;\quad \pi(f_{n,1}^{(m)})=0
\hbox{ for } m\leq n-1,
\end{equation}
\end{corollary}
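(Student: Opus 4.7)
The plan is to obtain the corollary as a direct specialization of Lemma \ref{claim1} to the case $m=n$. Lemma \ref{claim1} gives, for $1\leq l\leq n-1$, the values of $\pi(e_{m,1}^{(l)})$ and $\pi(f_{m,1}^{(l)})$ for all $m$ in the range $l+1\leq m\leq n$; since $n$ always falls in this range when $l\leq n-1$, one simply reads off the statement at $m=n$.

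Concretely, first I would fix $l$ in the range $1\leq l\leq n-2$ and apply the lemma with $m=n$: since $n\geq l+2$, the second case of the formula for $\pi(e_{m,1}^{(l)})$ gives $\pi(e_{n,1}^{(l)})=0$, which is the first assertion after relabeling $l$ as $m$. Next, taking $l=n-1$, the condition $m=l+1$ is satisfied with $m=n$, so the first case gives $\pi(e_{n,1}^{(n-1)})=1$. Finally, for $1\leq l\leq n-1$ the inequality $n\geq l+1$ holds, so the lemma yields $\pi(f_{n,1}^{(l)})=0$, which is the third assertion after relabeling. Since every case is covered by an applicable clause of Lemma \ref{claim1}, no further argument is needed; there is no substantive obstacle beyond verifying that the index ranges in the corollary correspond exactly to those in the lemma when $m$ is specialized to $n$.
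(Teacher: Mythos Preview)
Your proposal is correct and is exactly the approach the paper intends: the corollary is stated immediately after Lemma~\ref{claim1} with no separate proof, and it is obtained precisely by specializing the lemma to the first index $m=n$ and relabeling $l$ as $m$. Your verification that the index ranges match is all that is needed.
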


\begin{lemma}\label{claim11}
$$\pi(e_{n,1}^{(n)})= \pi(z),\quad
\pi(f_{n,1}^{(n)})= \pi(H_0).$$
\end{lemma}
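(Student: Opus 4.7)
The plan is to specialize the recursions (\ref{equG12}) and (\ref{equG13}) from Lemma \ref{claim1} to the case $m=l+1$ for each $l=1,\dots,n-1$, and then telescope. Setting $m=l+1$ in (\ref{equG12}) we obtain
$$\pi(e_{l+1,1}^{(l+1)}) \;=\; \pi(e_{l,1}^{(l)}) \;+\; \sum_{k=l+1}^{n}\Bigl(\pi(e_{l+1,k})\pi(e_{k,1}^{(l)})+(-1)^{l}\pi(f_{l+1,k})\pi(f_{k,1}^{(l)})\Bigr).$$
By Lemma \ref{claim1}, in the range $l+1\le k\le n$ we have $\pi(e_{k,1}^{(l)})=\delta_{k,l+1}$ and $\pi(f_{k,1}^{(l)})=0$. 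So only the term $k=l+1$ survives, giving
$$\pi(e_{l+1,1}^{(l+1)}) \;=\; \pi(e_{l,1}^{(l)}) + \pi(e_{l+1,l+1}).$$
The same substitution in (\ref{equG13}) yields analogously
$$\pi(f_{l+1,1}^{(l+1)}) \;=\; \pi(f_{l,1}^{(l)}) + (-1)^{l}\pi(f_{l+1,l+1}).$$

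Next I would telescope starting from the base case $\pi(e_{1,1}^{(1)})=\pi(e_{1,1})$ and $\pi(f_{1,1}^{(1)})=\pi(f_{1,1})$, which is just (\ref{equG9}). Iterating the two identities from $l+1=2$ up to $l+1=n$ gives
$$\pi(e_{n,1}^{(n)})=\sum_{k=1}^{n}\pi(e_{k,k}),\qquad \pi(f_{n,1}^{(n)})=\sum_{k=1}^{n}(-1)^{k-1}\pi(f_{k,k}).$$

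To conclude, I would identify the right-hand sides. Since $z=1_{2n}=\sum_{k=1}^{n}e_{k,k}$, the first sum is exactly $\pi(z)$. For the second, recall from (\ref{equ1}) that $H_{0}=\sum_{k=1}^{n}(-1)^{k-1}f_{k,k}$, so the second sum is $\pi(H_{0})$. There is no serious obstacle here: the only mild care needed is to verify the sign $(-1)^{l}$ in the telescoping step for $f$ matches the sign $(-1)^{k-1}$ appearing in $H_{0}$ when one sets $k=l+1$, and that the vanishing hypotheses of Lemma \ref{claim1} apply in the full range $l+1\le k\le n$ (which they do).
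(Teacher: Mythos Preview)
Your proof is correct and follows essentially the same route as the paper: both derive the one-step recursions $\pi(e_{m+1,1}^{(m+1)})=\pi(e_{m,1}^{(m)})+\pi(e_{m+1,m+1})$ and $\pi(f_{m+1,1}^{(m+1)})=\pi(f_{m,1}^{(m)})+(-1)^m\pi(f_{m+1,m+1})$ using (\ref{equ2}) and Lemma~\ref{claim1}, then telescope from the base case (\ref{equG9}). The only difference is cosmetic: you reuse the intermediate identities (\ref{equG12})--(\ref{equG13}) already obtained in the proof of Lemma~\ref{claim1}, whereas the paper rederives them from (\ref{equG1}) and (\ref{equG2}) directly.
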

\begin{proof}
Let $1\leq m\leq n$. We will show that
\begin{equation}\label{equG15}
\pi(e_{m,1}^{(m)}) = \sum_{k = 1}^{m}\pi(e_{k,k}),\quad
\pi(f_{m,1}^{(m)}) = \sum_{k = 1}^{m}(-1)^{k-1}\pi(f_{k,k}).
\end{equation}
Again we proceed by induction on $m$. If $m = 1$, then (\ref{equG15}) obviously holds by (\ref{equG9}).
Assume that (\ref{equG15}) holds for $m$.
From (\ref{equG1}) and (\ref{equG2}) we have that
\begin{align}
& e_{m+1,1}^{(m+1)} = \sum_{k = 1}^{n}e_{m+1,k}e_{k,1}^{(m)}
+ (-1)^m \sum_{k = 1}^{n}f_{m+1,k}f_{k,1}^{(m)} =
\sum_{k = 1}^{m}\Big(e_{k,1}^{(m)}e_{m+1,k} + e_{m+1,1}^{(m)}\Big) +
\sum_{k = m+1}^{n}e_{m+1,k}e_{k,1}^{(m)} + \nonumber\\
&(-1)^{m}\Big(\sum_{k = 1}^{m}(-f_{k,1}^{(m)}f_{m+1,k} + (-1)^{m+1}e_{m+1,1}^{(m)}) +
\sum_{k = m+1}^{n}f_{m+1,k}f_{k,1}^{(m)}\Big),\nonumber\\
&f_{m+1,1}^{(m+1)} = \sum_{k = 1}^{n}e_{m+1,k}f_{k,1}^{(m)}
+ (-1)^m \sum_{k = 1}^{n}f_{m+1,k}e_{k,1}^{(m)} =
\sum_{k = 1}^{m}\Big(f_{k,1}^{(m)}e_{m+1,k} + f_{m+1,1}^{(m)}\Big) +
\sum_{k = m+1}^{n}e_{m+1,k}f_{k,1}^{(m)} +\nonumber\\
&(-1)^{m}\Big(\sum_{k = 1}^{m}(e_{k,1}^{(m)}f_{m+1,k} + (-1)^{m+1}f_{m+1,1}^{(m)}) +
\sum_{k = m+1}^{n}f_{m+1,k}e_{k,1}^{(m)}\Big).
\nonumber
\end{align}
Using (\ref{equ2}) and (\ref{equG8}) we obtain
\begin{align}
&\pi(e_{m+1,1}^{(m+1)}) =
\pi(e_{m,1}^{(m)}) + \pi(e_{m+1,m+1}),\nonumber\\
&\pi(f_{m+1,1}^{(m+1)}) =
 \pi(f_{m,1}^{(m)}) + (-1)^m\pi(f_{m+1,m+1}).
 \nonumber
\end{align}
By induction hypothesis we have
\begin{align}
&\pi(e_{m+1,1}^{(m+1)}) = \sum_{k = 1}^{m}\pi(e_{k,k}) + \pi(e_{m+1,m+1}) = \sum_{k = 1}^{m+1}\pi(e_{k,k}),\nonumber\\
&\pi(f_{m+1,1}^{(m+1)}) = \sum_{k = 1}^{m}(-1)^{k-1}\pi(f_{k,k}) + (-1)^m\pi(f_{m+1,m+1}) =
\sum_{k = 1}^{m+1}(-1)^{k-1}\pi(f_{k,k}).
 \nonumber
\end{align}
Hence
$\pi(e_{n,1}^{(n)}) = \pi(z)$ and
$\pi(f_{n,1}^{(n)}) = \pi(H_0)$.
\end{proof}

Consider the Kazhdan filtration on $U(\b)$. By definition, the graded algebra
$Gr_K U(\b)$ is isomorphic to $S(\b)$. Moreover, $Gr_K U(\b)\simeq S(\b)$ is a commutative graded ring, where
the grading is induced from the Dynkin $\Z$-grading of $\g$.
For any $X\in U(\b)$ let $Gr_K(X)$ denote the corresponding element in $Gr_K U(\b)$ and
$P(X)$ denote the highest weight component of $Gr_K(X)$ in the Dynkin $\Z$-grading.
For $X\in U(\b)$, we denote by $\hbox{deg}P(X)$ the Kazhdan degree of $Gr_K(X)$ and by $\hbox{wt}P(X)$ the weight of the highest weight component
of  $Gr_K(X)$.

\begin{lemma}\label{claim2} $P(\pi(e_{n,1}^{(n)}))=z$,
\begin{align}\label{equG16}
&P(\pi(e_{n,1}^{(n-1+k)}))=e^{k-1}, k=2,\dots,n,\\
&P(\pi(f_{n,1}^{(n-1+k)}))=H_{k-1},k=1,\dots,n.
 \nonumber
\end{align}
\end{lemma}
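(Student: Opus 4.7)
The plan is to prove by induction on $k$ (with an inner induction on $m$) the strengthened statement that for all $1 \le m \le n$ and $k \ge 1$:
$$P(\pi(e_{m,1}^{(m+k-1)})) = \sum_{i=1}^{\min(m,\,n-k+1)} e_{i,\,i+k-1}, \qquad P(\pi(f_{m,1}^{(m+k-1)})) = \sum_{i=1}^{\min(m,\,n-k+1)} (-1)^{i+k} f_{i,\,i+k-1}.$$
Setting $m = n$ recovers $P(\pi(e_{n,1}^{(n)})) = z$, $P(\pi(e_{n,1}^{(n-1+k)})) = e^{k-1}$ for $k \ge 2$, and $P(\pi(f_{n,1}^{(n-1+k)})) = H_{k-1}$, which is exactly the assertion of the lemma. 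The base case $k = 1$ is immediate from the proof of Lemma \ref{claim11}: $\pi(e_{m,1}^{(m)})$ and $\pi(f_{m,1}^{(m)})$ are the stated linear combinations of diagonal generators, sitting at Kazhdan degree $2$ and Dynkin degree $0$, so $P$ coincides with the element itself.

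For the inductive step at level $k$ with $m \ge 2$, I would apply (\ref{equG12}) at $l = m+k-2$:
$$\pi(e_{m,1}^{(m+k-1)}) = \pi(e_{m-1,1}^{(m+k-2)}) + \sum_{j=m}^n \bigl(\pi(e_{m,j})\pi(e_{j,1}^{(m+k-2)}) + (-1)^{m+k}\pi(f_{m,j})\pi(f_{j,1}^{(m+k-2)})\bigr).$$
Lemma \ref{claim1} kills the terms with $j \ge m+k$ and reduces the $j = m+k-1$ contribution to the single linear term $\pi(e_{m,m+k-1})$ (present only when $m+k-1 \le n$). For the remaining cross terms $m \le j \le m+k-2$ the outer induction hypothesis applies: $\pi(e_{j,1}^{(m+k-2)})$ and $\pi(f_{j,1}^{(m+k-2)})$ have linear top components of Dynkin degree $2(m+k-j-2)$; multiplying by $\pi(e_{m,j})$ or $\pi(f_{m,j})$ (which has Dynkin degree $2(j-m)$) produces a quadratic polynomial of Dynkin degree $2(k-2)$, strictly below the target top degree $2(k-1)$. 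Thus the cross terms are invisible to $P$. Combining with the inner induction hypothesis at $(m-1,k)$, the top linear part telescopes to $\sum_{i=1}^{\min(m-1,\,n-k+1)} e_{i,\,i+k-1} + e_{m,m+k-1} = \sum_{i=1}^{\min(m,\,n-k+1)} e_{i,\,i+k-1}$.

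The case $m = 1$ of the inner induction is not covered by (\ref{equG12}), so I would treat it directly from (\ref{equG1}) using the identity $\pi(ay) = a\,\pi(y)$ valid for $a \in \b$: the only summand producing a linear element at the top Kazhdan degree is $e_{1,k}\cdot\pi(e_{k,1}^{(k-1)}) = e_{1,k}$, matching $\sum_{i=1}^{\min(1,\,n-k+1)} e_{i,\,i+k-1}$. The $f$ case is entirely parallel, with the sign $(-1)^{m+k}$ in (\ref{equG13}) combining with the inductive sign $(-1)^{i+k}$ at $i \le m-1$ and the new contribution $(-1)^{m+k}$ at $i = m$ to reproduce the alternating pattern of $H_{k-1}$.

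The main obstacle is the degree bookkeeping in the inductive step. The crucial structural input is that each $\pi(e_{j,1}^{(l)})$ and $\pi(f_{j,1}^{(l)})$ has a \emph{linear} top component in its top Kazhdan degree, supplied by the induction hypothesis; once this linearity is in hand, the observation that linear times linear yields a quadratic strictly below the linear top in Dynkin degree is a routine degree count, and the telescoping over $m$ is mechanical.
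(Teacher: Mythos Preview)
Your proposal is correct and follows essentially the same approach as the paper's proof. Both arguments prove the identical strengthened claim (the paper's (\ref{equG17}) is your formula with $l=k-1$, $p=m$) by double induction, first on the level $k$ (resp.\ $l$) and then on $m$ (resp.\ $p$), starting the inner induction at $m=1$ and using the same Kazhdan degree/Dynkin weight bookkeeping to dismiss the cross terms; the only cosmetic difference is that you cite the recursion (\ref{equG12})--(\ref{equG13}) directly while the paper re-expands $e_{m+1,1}^{(m+1+k)}$ from scratch.
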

\begin{proof} We will prove a more general statement. We claim that for $0\leq l\leq n-1$ and $1\leq p\leq n$
\begin{align}\label{equG17}
&P(\pi(e_{p,1}^{(p+l)}))=\sum_{i= 1}^re_{i,i+l},\\
&P(\pi(f_{p,1}^{(p+l)}))=\sum_{i= 1}^r(-1)^{l+1-i}f_{i,i+l},\hbox{ }
r = \hbox{min}\lbrace p, n-l\rbrace.
 \nonumber
\end{align}
In particular,
\begin{align}
&\hbox{deg}P(\pi(e_{p,1}^{(p+l)})) = \hbox{deg}P(\pi(f_{p,1}^{(p+l)})) = 2l+2,\nonumber\\
&\hbox{wt}P(\pi(e_{p,1}^{(p+l)})) = \hbox{wt}P(\pi(f_{p,1}^{(p+l)})) = 2l.
 \nonumber
\end{align}
We proceed to the proof of (\ref{equG17}) by induction on $l$ and $p$.
Note that if $l = 0$, then (\ref{equG17}) holds for any $1\leq p\leq n$ by (\ref{equG15}).
Assume that
if $l\leq k-1$, then (\ref{equG17}) holds for any $1\leq p\leq n$.
Let $l = k$. Show that (\ref{equG17}) holds for $p = 1$.
Note that
$$e_{1,1}^{(1+k)} = \Big(\sum_{i = 1}^{n}e_{1,i}e_{i,1}^{(k)}\Big) +
(-1)^k\Big(\sum_{i = 1}^{n}f_{1,i}f_{i,1}^{(k)}\Big).$$
Let $X = \pi(e_{1,i}e_{i,1}^{(k)})$, $Y = \pi(f_{1,i}f_{i,1}^{(k)})$ where $i = 1, \ldots, k$.
Note that
\begin{align}
&\hbox{deg}P(\pi(e_{1,i})) = \hbox{deg}P(\pi(f_{1,i})) = 2i,\nonumber\\
&\hbox{wt}P(\pi(e_{1,i})) = \hbox{wt}P(\pi(f_{1,i})) = 2i-2.
 \nonumber
\end{align}
By induction hypothesis,
\begin{align}
&\hbox{deg}P(\pi(e_{1,i}^{(k)})) = \hbox{deg}P(\pi(f_{1,i}^{(k)})) = 2(k-i)+2,\nonumber\\
&\hbox{wt}P(\pi(e_{1,i}^{(k)})) = \hbox{wt}P(\pi(f_{1,i}^{(k)})) = 2(k-i).
 \nonumber
\end{align}
Then
\begin{align}
&\hbox{deg}P(X) = 2k+2, \quad \hbox{wt}P(X) = 2k-2,\nonumber\\
&\hbox{deg}P(Y) = 2k+2, \quad \hbox{wt}P(Y) = 2k-2.
\nonumber
\end{align}
Let $X = \pi(e_{1,k+1}e_{k+1,1}^{(k)})$. Then by (\ref{equG8}) $X  = \pi(e_{1,k+1}).$ Hence
$$\hbox{deg}P(X) = 2k+2, \quad \hbox{wt}P(X) = 2k.$$
Finally, by (\ref{equG8})
$$\pi(e_{k+i,1}^{(k)}) = 0\hbox{ for }i = 2, \ldots, n-k, \quad
\pi(f_{k+i,1}^{(k)}) = 0\hbox{ for }i = 1, \ldots, n-k.$$
Hence
$$P(\pi(e_{1,1}^{(1+k)})) = e_{1,k+1}.$$
Let $l = k$ and assume that (\ref{equG17}) holds for $p \leq m$.
Show that it holds for $p = m+1$. Note that
$$e_{m+1,1}^{(m+1+k)} = \Big(\sum_{i = 1}^{n}e_{m+1,i}e_{i,1}^{(m+k)}\Big) +
(-1)^{m+k}\Big(\sum_{i = 1}^{n}f_{m+1,i}f_{i,1}^{(m+k)}\Big).$$
Thus
\begin{align}
&\pi(e_{m+1,1}^{(m+1+k)}) =
\Big(\sum_{i = 1}^{m-1}\pi(e_{m+1,i}e_{i,1}^{(m+k)})\Big) + \pi(e_{m+1,m}e_{m,1}^{(m+k)})+ \nonumber\\
&\sum_{i = 1}^{k}\pi(e_{m+1,m+i}e_{m+i,1}^{(m+k)}) +
\pi(e_{m+1,m+k+1}e_{m+k+1,1}^{(m+k)})+
\sum_{i = 2}^{n-m-k}\pi(e_{m+1,m+k+i}e_{m+k+i,1}^{(m+k)})+ \nonumber\\
&(-1)^{m+k}\Big(\sum_{i = 1}^{m}\pi(f_{m+1,i}f_{i,1}^{(m+k)}) +
\sum_{i = 1}^{k}\pi(f_{m+1,m+i}f_{m+i,1}^{(m+k)}) + \nonumber\\
&\sum_{i = 1}^{n-m-k}\pi(f_{m+1,m+k+i}f_{m+k+i,1}^{(m+k)})\Big).
\nonumber
\end{align}
Let $X = \pi(e_{m+1,i}e_{i,1}^{(m+k)})$, where $i = 1, \ldots, m-1$, and
$Y = \pi(f_{m+1,i}f_{i,1}^{(m+k)})$, where $i = 1, \ldots, m$.
 Then by (\ref{equG2}) and (\ref{equ2})
 \begin{align}
&X = \pi(e_{i,1}^{(m+k)}e_{m+1,i} + e_{m+1,1}^{(m+k)}) =
\pi(e_{m+1,1}^{(m+k)}),\nonumber\\
&Y = \pi(-f_{i,1}^{(m+k)}f_{m+1,i} + (-1)^{m+k+1}e_{m+1,1}^{(m+k)}) =
\pi((-1)^{m+k+1}e_{m+1,1}^{(m+k)}).
\nonumber
\end{align}
By induction hypothesis
\begin{align}\label{equG18}
&\hbox{deg}P(X) = \hbox{deg}P(Y) = 2k, \\
&\hbox{wt}P(X) = \hbox{wt}P(Y) = 2k-2.
\nonumber
\end{align}
Let $X = \pi(e_{m+1,m}e_{m,1}^{(m+k)})$. Then by (\ref{equG2}) and (\ref{equ2})
$$X = \pi(e_{m,1}^{(m+k)}e_{m+1,m} + e_{m+1,1}^{(m+k)}) =
\pi(e_{m,1}^{(m+k)} + e_{m+1,1}^{(m+k)}).$$
By induction hypothesis
\begin{align}\label{equG19}
&\hbox{deg}P(\pi(e_{m+1,1}^{(m+k)})) = 2k, \\
&\hbox{wt}P(\pi(e_{m+1,1}^{(m+k)})) = 2k-2,
\nonumber
\end{align}
\begin{align}\label{equG20}
&\hbox{deg}P(\pi(e_{m,1}^{(m+k)})) = 2k+2, \\
&\hbox{wt}P(\pi(e_{m,1}^{(m+k)})) = 2k.
\nonumber
\end{align}
Let $X = \pi(e_{m+1,m+i}e_{m+i,1}^{(m+k)})$, $Y = \pi(f_{m+1,m+i}f_{m+i,1}^{(m+k)})$
for $i = 1, \ldots, k$.
Then by induction hypothesis
\begin{align}\label{equG21}
&\hbox{deg}P(X) = \hbox{deg}P(Y) =2k+2, \\
&\hbox{wt}P(X) = \hbox{wt}P(Y) = 2k-2.
\nonumber
\end{align}
Let $X = \pi(e_{m+1,m+k+1}e_{m+k+1,1}^{(m+k)})$. Hence by (\ref{equG8}) $X = \pi(e_{m+1,m+k+1})$. Then
\begin{align}\label{equG22}
&\hbox{deg}P(X) = 2k+2, \\
&\hbox{wt}P(X) = 2k.
\nonumber
\end{align}
Finally, by (\ref{equG8}) $\pi(e_{m+1,m+k+i}e_{m+k+i,1}^{(m+k)}) = 0$ for $i = 2,\ldots, n-m-k$ and

\noindent
$\pi(f_{m+1,m+k+i}f_{m+k+i,1}^{(m+k)}) = 0$ for $i = 1,\ldots,n-m-k$.
From (\ref{equG18})-(\ref{equG22}) one can see that the highest degree component in $\pi(e_{m+1,1}^{(m+1+k)})$ has degree $2k+2$,
and its highest weight component has weight $2k$. In fact, if $m\geq n-k$, then by (\ref{equG20})
this component is  $P(\pi(e_{m,1}^{(m+k)}))$. By induction hypothesis
$P(\pi(e_{m,1}^{(m+k)})) = \sum_{i= 1}^{n-k}e_{i,i+k}$.
If  $m < n-k$, then $P(\pi(e_{m,1}^{(m+k)})) = \sum_{i= 1}^{m}e_{i,i+k}$. Note that in this case
$\pi(e_{m+1,1}^{(m+1+k)})$ has
an additional element $\pi(e_{m+1,m+k+1})$
of degree $2k + 2$ and weight $2k$ according to (\ref{equG22}).
Clearly,  $P(\pi(e_{m+1,m+k+1})) = e_{m+1,m+k+1}$ and $P(\pi(e_{m,1}^{(m+k)})) +  P(\pi(e_{m+1,m+k+1}))  \not= 0$.
Hence
$$P(\pi(e_{m+1,1}^{(m+1+k)}))= P(\pi(e_{m,1}^{(m+k)})) +  P(\pi(e_{m+1,m+k+1})) = \sum_{i= 1}^{m+1}e_{i,i+k}.$$
Then in either case,
$$P(\pi(e_{m+1,1}^{(m+1+k)}))  = \sum_{i= 1}^{r}e_{i,i+k},\hbox{ where }
r = \hbox{min}\lbrace m+1, n-k\rbrace.$$
Thus if $0\leq l\leq n-1$ and $1\leq p\leq n$, then
$$P(\pi(e_{p,1}^{(p+l)}))=\sum_{i= 1}^re_{i,i+l},\hbox{ where }
r = \hbox{min}\lbrace p, n-l\rbrace.$$
Similarly, one can prove that
$$P(\pi(f_{p,1}^{(p+l)}))=\sum_{i= 1}^r(-1)^{l+1-i}f_{i,i+l},\hbox{ }
r = \hbox{min}\lbrace p, n-l\rbrace.$$
In particular, if $p = n$ and $l = k$, where $k = 0, \ldots, n-1$, we have
\begin{align}
&P(\pi(e_{n,1}^{(n+k)}))=\sum_{i= 1}^{n-k}e_{i,i+k} = e^k,\nonumber\\
&P(\pi(f_{n,1}^{(n+k)}))=\sum_{i= 1}^{n-k}(-1)^{k+1-i}f_{i,i+k} = H_k.
\nonumber
\end{align}
\end{proof}

\begin{proposition}\label{generators}
$\pi(e_{n,1}^{(m)})$ and $\pi(f_{n,1}^{(m)})$ for $m=n,\dots,2n-1$ generate  $W_\chi$.
\end{proposition}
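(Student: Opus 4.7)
The plan is to invoke Proposition \ref{towpr}(a). For $\g=Q(n)$ equipped with the Dynkin grading, the grading is even and $\g_{-1}=0$, so the hypothesis that $\dim(\g_{-1})_{\bar 1}$ be even is trivially satisfied. Moreover, from (\ref{equ1}) a homogeneous basis of $\g^\chi$ is
$$y_1=z,\ y_2=e,\ y_3=e^2,\dots,y_n=e^{n-1},\quad y_{n+1}=H_0,\ y_{n+2}=H_1,\dots,y_{2n}=H_{n-1},$$
so it suffices to exhibit $2n$ elements $Y_i\in W_\chi$ with $\overline{\hbox{Gr}_K Y_i}=y_i$, and the proposition will then identify them as a generating set.

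The natural candidates, matched to the basis above, are
$$Y_1=\pi(e_{n,1}^{(n)}),\ Y_2=\pi(e_{n,1}^{(n+1)}),\dots,Y_n=\pi(e_{n,1}^{(2n-1)}),\quad Y_{n+1}=\pi(f_{n,1}^{(n)}),\dots,Y_{2n}=\pi(f_{n,1}^{(2n-1)}).$$
By Lemma \ref{inW} these all lie in $W_\chi$. The two $m=n$ cases are covered by Lemma \ref{claim11}, which gives $\pi(e_{n,1}^{(n)})=\pi(z)$ and $\pi(f_{n,1}^{(n)})=\pi(H_0)$. For the remaining indices Lemma \ref{claim2} computes the top symbols explicitly: $P(\pi(e_{n,1}^{(n-1+k)}))=e^{k-1}$ for $k=2,\dots,n$, and $P(\pi(f_{n,1}^{(n-1+k)}))=H_{k-1}$ for $k=1,\dots,n$.

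It remains to match up the two notions of ``top symbol.'' Because the good $\Z$-grading here is even, one has $\l'=0$ and $\p=\b$, and the PBW isomorphism $U(\g)/I_\chi\cong U(\b)$ together with $\hbox{Gr}_K U(\b)\cong S(\b)$ identifies $\hbox{Gr}_K(U(\g)/I_\chi)$ with $S(\p\oplus\l')=S(\b)$. Under this identification the highest Dynkin-weight component $P(X)$ of $\hbox{Gr}_K X$ coincides exactly with the $\bar{\,\cdot\,}$ of Theorem \ref{PremetQ} used in Proposition \ref{towpr}, since the Kazhdan degree and the Dynkin weight of a homogeneous monomial in $S(\b)$ differ only by twice the polynomial degree. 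Hence $\overline{\hbox{Gr}_K Y_i}=y_i$ for every $i$, and Proposition \ref{towpr}(a) delivers the conclusion.

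The entire proof is essentially a bookkeeping exercise once Lemma \ref{claim2} is in hand; the genuine computation has already been done there by an induction on the superscript $m$ of $e_{n,1}^{(m)}$, $f_{n,1}^{(m)}$. I expect no real obstacle at this stage. The only mild subtlety worth spelling out is the identification of $P$ with the symbol used in Proposition \ref{towpr}, which is immediate from the even-grading setup for $Q(n)$.
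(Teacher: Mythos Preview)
Your proof is correct and follows exactly the paper's approach: the paper's one-line proof simply cites Lemma \ref{claim2} and Proposition \ref{towpr}(a), and you have spelled out the details of how these combine, including the (straightforward) identification of the symbol $P(\cdot)$ with the $\overline{\,\cdot\,}$ of Proposition \ref{towpr} in the even-grading setting. Your appeal to Lemma \ref{claim11} for the $m=n$ cases is harmless but redundant, since Lemma \ref{claim2} already records $P(\pi(e_{n,1}^{(n)}))=z$ and covers $P(\pi(f_{n,1}^{(n)}))=H_0$ as the $k=1$ case.
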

\begin{proof} The statement follows from Lemma \ref{claim2} and Proposition \ref{towpr} (a).
\end{proof}

\begin{corollary}\label{cor}
Lemma \ref{claim2} and Proposition  \ref{towpr} (b) imply that Conjecture \ref{generalconjecture} is true for $\g = Q(n)$ and regular $\chi$.
\end{corollary}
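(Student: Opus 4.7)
The plan is to directly verify the hypotheses of Proposition \ref{towpr}(b) in this setting and read off the conclusion. First, for $\g=Q(n)$ with the regular $\chi$ determined by the principal nilpotent $E$, the Dynkin $\Z$-grading is even (all degrees in the displayed grading matrix are even integers), so $\g_{-1}=0$ and in particular $\dim(\g_{-1})_{\bar 1}=0$ is even. Thus the relevant case of Conjecture \ref{generalconjecture} is $\hbox{Gr}_K W_\chi\simeq S(\g^\chi)$, and the applicable case of Proposition \ref{towpr}(b) is the one without the auxiliary $\C[\xi]$ factor.

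Next I would specify a homogeneous basis of $\g^\chi$ together with explicit lifts to $W_\chi$. By (\ref{equ1}), a homogeneous basis of $\g^\chi$ in the Dynkin grading is $\{z,e,e^2,\dots,e^{n-1}\}\cup\{H_0,H_1,\dots,H_{n-1}\}$, of size $2n=\dim\g^\chi$. For the lifts I would use the Sergeev elements: Lemma \ref{inW} places each $\pi(e_{n,1}^{(m)})$ and $\pi(f_{n,1}^{(m)})$ in $W_\chi$, while Lemma \ref{claim2} records that for $m=n,\dots,2n-1$ the top Kazhdan symbols $\overline{\hbox{Gr}_K\pi(e_{n,1}^{(m)})}$ run through $z,e,e^2,\dots,e^{n-1}$ and $\overline{\hbox{Gr}_K\pi(f_{n,1}^{(m)})}$ run through $H_0,H_1,\dots,H_{n-1}$, respectively. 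This is precisely what the hypothesis of Proposition \ref{towpr}(b) demands.

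With the hypotheses confirmed, Proposition \ref{towpr}(b) immediately yields $\hbox{Gr}_K W_\chi\simeq S(\g^\chi)$, which is the case of Conjecture \ref{generalconjecture} appropriate to $Q(n)$. There is essentially no obstacle at the level of this corollary; the substantive difficulty lies upstream, in the combinatorial induction carried out in Lemma \ref{claim2} that pins down the leading Kazhdan components of the Sergeev generators, and in the Premet-style argument giving Proposition \ref{towpr}(b). Here we are only assembling those pieces, so the proof reduces to citing them once the parity of $\dim(\g_{-1})_{\bar 1}$ and the count $2n=\dim\g^\chi$ have been checked.
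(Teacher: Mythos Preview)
Your proposal is correct and follows exactly the route the paper intends: the corollary has no separate proof in the paper because its statement already names the two ingredients, and you have simply spelled out how they fit together (checking that $\g_{-1}=0$ so the even case of Proposition~\ref{towpr}(b) applies, and that Lemma~\ref{claim2} supplies the required lifts $Y_i$ for the homogeneous basis of $\g^\chi$ from~(\ref{equ1})).
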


\begin{corollary}\label{conjQ}
The natural homomorphism $U(\g)^{ad \m}\to W_{\chi}$ is surjective.
\end{corollary}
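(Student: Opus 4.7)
The corollary should follow immediately by combining the two main ingredients already established in this section: the explicit choice of generators of $W_\chi$ given in Proposition \ref{generators}, and the fact established in Lemma \ref{inW} that these generators admit lifts to $\text{ad }\m$-invariants in $U(\g)$.

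The plan is as follows. First, observe that the map $U(\g)^{\text{ad }\m} \to W_\chi$, $y \mapsto \pi(y)$, is well-defined: if $y \in U(\g)^{\text{ad }\m}$ then $\text{ad}(a) y = 0 \in I_\chi$ for every $a \in \m$, so by characterization (\ref{favorite1}) the image $\pi(y)$ lies in $W_\chi$. Moreover this map is an algebra homomorphism, since $U(\g)^{\text{ad }\m}$ is a subalgebra of $U(\g)$ (ad-invariants form a subalgebra), and the product on $W_\chi$ is defined precisely by $\pi(y_1)\pi(y_2) = \pi(y_1 y_2)$ for representatives $y_i$ satisfying $\text{ad}(a) y_i \in I_\chi$, which is certainly the case for honest ad-invariants.

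Next, by Proposition \ref{generators} the algebra $W_\chi$ is generated by the elements $\pi(e_{n,1}^{(m)})$ and $\pi(f_{n,1}^{(m)})$ for $m = n, \ldots, 2n-1$. By Lemma \ref{inW}, each of the preimages $e_{n,1}^{(m)}$ and $f_{n,1}^{(m)}$ already lies in $U(\g)^{\text{ad }\m}$, so each generator of $W_\chi$ is in the image of the natural homomorphism. Since the image of an algebra homomorphism is a subalgebra and it contains a generating set of $W_\chi$, the homomorphism is surjective.

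There is no real obstacle; the corollary is essentially a bookkeeping consequence of the two previous results. The only point to flag is verifying that $U(\g)^{\text{ad }\m}\to W_\chi$ respects multiplication, which follows at once from the definition of the algebra structure on $W_\chi$ recalled after formula (\ref{favorite1}).
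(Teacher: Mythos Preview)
Your proof is correct and follows the same approach as the paper: both use that $e_{n,1}^{(m)}, f_{n,1}^{(m)} \in U(\g)^{\operatorname{ad}\m}$ (shown in the proof of Lemma~\ref{inW}) and that their images generate $W_\chi$ (Proposition~\ref{generators}). You simply spell out in more detail why the natural map is a well-defined algebra homomorphism, which the paper's one-line proof leaves implicit.
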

\begin{proof} Since $e_{n,1}^{(m)}, f_{n,1}^{(m)}\in U(\g)^{ad \m}$, the statement follows from Proposition \ref{generators}.
\end{proof}

\section{Further results about the structure of $W_\chi$ for $\g = Q(n)$}

\subsection{The Harish-Chandra homomorphism for  $Q(n)$}
Recall that for  $\g = Q(n)$ and regular $\chi$ we have $\p=\b$.
We study in detail the restriction of the Harish-Chandra homomorphism
$\vartheta: U(\b)\longrightarrow U(\h)$ to $W_\chi$.
We start with calculating the images of the generators.

\begin{proposition}\label{HCgenerators}
\begin{align}\label{equA2}
&\vartheta(\pi(e_{n,1}^{(n+k-1)})) = [\sum_{i_1\geq i_2\geq\ldots \geq i_k}
(x_{i_1} + (-1)^{k+1}\xi_{i_1}) \ldots (x_{i_{k-1}} - \xi_{i_{k-1}})(x_{i_{k}} + \xi_{i_{k}})]_{even},\\
&\vartheta(\pi(f_{n,1}^{(n+k-1)})) = [\sum_{i_1\geq i_2\geq\ldots \geq i_k}
(x_{i_1} + (-1)^{k+1}\xi_{i_1}) \ldots (x_{i_{k-1}} - \xi_{i_{k-1}})(x_{i_{k}} + \xi_{i_{k}})]_{odd}.
\nonumber
\end{align}

\end{proposition}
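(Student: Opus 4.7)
My plan is to prove \eqref{equA2} by induction on $k$, using a strengthened hypothesis that supplies $\vartheta(\pi(e_{p,1}^{(p+l-1)}))$ and $\vartheta(\pi(f_{p,1}^{(p+l-1)}))$ for every $1\le p\le n$, not just for $p=n$. Motivated by Lemma \ref{claim2}, the natural strengthening is that the formula \eqref{equA2} remains valid when the summation indices are restricted to $i_1,\dots,i_l\le p$. The base case $k=1$ is immediate from Lemma \ref{claim11}: $\pi(e_{n,1}^{(n)})=\pi(z)=\sum_i x_i$ and $\pi(f_{n,1}^{(n)})=\pi(H_0)=\sum_i\xi_i$ already lie in $U(\mathfrak{h})$, and each matches the corresponding parity component of $\sum_{i_1}(x_{i_1}+\xi_{i_1})$.

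For the inductive step I would apply the Sergeev recursion \eqref{equG1} to $e_{n,1}^{(n+k)}$ and $f_{n,1}^{(n+k)}$ and reduce each monomial $e_{n,j}e_{j,1}^{(n+k-1)}$ (and its $f$-analogue) modulo $I_\chi$. The reduction uses the identity $\pi(aX)=\chi(a)\pi(X)+\pi([a,X])$ for $a\in\mathfrak{m}$, iterated as necessary, together with the commutators \eqref{equG2} and the values $\chi(e_{n,n-1})=1$, $\chi(e_{n,j})=0$ for $j\le n-2$, and $\chi(f_{n,j})=0$ from \eqref{equ2}. Terms with $j=n$ contribute directly since $e_{n,n}$ and $f_{n,n}$ lie in $\mathfrak{h}$; terms with $j=n-1$ contribute through $\chi$; and terms with $j<n-1$ produce only iterated commutator corrections, which by the strengthened hypothesis fit into the same combinatorial shape with a smaller allowed index range.

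The heart of the argument will be a matching identity for the right-hand side of \eqref{equA2}. I would peel off the leftmost factor $(x_{i_1}+(-1)^{k+2}\xi_{i_1})$ and observe that the remaining tail, with $i_2\le i_1$, is precisely the $p=i_1$ instance of the strengthened hypothesis. The cross terms that appear when the leftmost index coincides with the next are governed by the Clifford relation $\xi_i^2=x_i$, which comes from $[f_{i,i},f_{i,i}]=2e_{i,i}$; these match exactly the contributions of the $f_{n,n}f_{n,1}^{(\cdot)}$ monomials on the Sergeev side. The alternating signs $(-1)^{k+j}$ originate from $(-1)^{m+1}$ in the recursion together with the convention $\xi_i=(-1)^{i+1}f_{i,i}$, while the even/odd split merely records the parity of the number of $f$-letters in each unfolded monomial.

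The hard part will be the combinatorial and sign bookkeeping needed to confirm that each decreasing tuple $i_1\ge\cdots\ge i_k$ contributes to $\vartheta(\pi(e_{n,1}^{(n+k-1)}))$ with exactly the sign prescribed by \eqref{equA2}, once all commutator corrections and Clifford identifications are processed. To tame this, I would probably assemble the elements into generating series $E(u)=\sum_m u^{-m}\,\pi(e_{n,1}^{(m)})$ and $F(u)=\sum_m u^{-m}\,\pi(f_{n,1}^{(m)})$, and translate the claim into a product formula on the Harish-Chandra side; the product-over-$i$ structure of the right-hand side of \eqref{equA2} suggests that such a generating-function identity should make the multiplicative bookkeeping transparent, and it would also dovetail well with the later identification of $W_\chi$ as a quotient of the super-Yangian of $Q(1)$.
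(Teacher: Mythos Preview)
Your strategy is correct and parallels the paper's: the paper also proves, by double induction on $l$ and $p$, a formula for $\vartheta(\pi(e_{p,1}^{(p+l)}))$ and $\vartheta(\pi(f_{p,1}^{(p+l)}))$ with indices restricted to $\{1,\dots,p\}$, and the recursion is fed by the Sergeev relations \eqref{equG1}, \eqref{equG2} together with \eqref{equ2} and Lemma~\ref{claim1}. However, you are missing the one simplification that makes the paper's argument short and renders your anticipated ``hard part'' and generating-series workaround unnecessary.

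The trick is not to track the even and odd parts separately but to prove a single identity for their \emph{sum}:
\[
\vartheta(\pi(e_{p,1}^{(p+l)}))+\vartheta(\pi(f_{p,1}^{(p+l)}))
=\sum_{p\ge i_1\ge\cdots\ge i_{l+1}\ge 1}
(x_{i_1}+(-1)^{l}\xi_{i_1})\cdots(x_{i_l}-\xi_{i_l})(x_{i_{l+1}}+\xi_{i_{l+1}}).
\]
Once this is established, \eqref{equA2} follows by reading off the even and odd homogeneous components. The payoff is that the reductions you describe (use \eqref{equG12}--\eqref{equG13}, then kill the $\n$-terms with $\vartheta$) collapse to the single clean recursion
\[
\begin{aligned}
\vartheta(\pi(e_{m+1,1}^{(m+1+k)}))+\vartheta(\pi(f_{m+1,1}^{(m+1+k)}))
&=\vartheta(\pi(e_{m,1}^{(m+k)}))+\vartheta(\pi(f_{m,1}^{(m+k)}))\\
&\quad+\bigl(x_{m+1}+(-1)^{k}\xi_{m+1}\bigr)\Bigl(\vartheta(\pi(e_{m+1,1}^{(m+k)}))+\vartheta(\pi(f_{m+1,1}^{(m+k)}))\Bigr),
\end{aligned}
\]
which is exactly the ``peel off the leftmost factor'' identity you wanted, with no residual sign bookkeeping. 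The iterated commutator corrections you worry about for $j<m$ cancel in pairs between the $e$- and $f$-parts of the recursion (this is already visible in the derivation of \eqref{equG12}), and the terms with $j>m+1$ lie in $\n U(\b)$ and are annihilated by $\vartheta$. Once you package $e$ and $f$ together, the induction is a two-line computation and no generating series are needed.
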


\begin{proof}
We will prove by induction on $l$ and $p$ that if $0\leq l\leq n-1$ and $1\leq p \leq n$ then
\begin{align}\label{equA5}
&\vartheta(\pi(e_{p,1}^{(p+l)})) + \vartheta(\pi(f_{p,1}^{(p+l)})) =\\
&\sum_{p\geq i_1\geq i_2\geq\ldots \geq i_k\geq 1}
(x_{i_1} + (-1)^{l}\xi_{i_1}) \ldots (x_{i_{l}} - \xi_{i_{l}})(x_{i_{l+1}} + \xi_{i_{l+1}}).
\nonumber
\end{align}
Note that if $l = 0$, then (\ref{equA5}) holds for any $1\leq p \leq n$ since by (\ref{equG15})
\begin{align}
&\vartheta(\pi(e_{p,1}^{(p)})) + \vartheta(\pi(f_{p,1}^{(p)})) = \nonumber\\
&\vartheta(\sum_{i= 1}^p\pi(e_{i,i})) +
\vartheta(\sum_{i= 1}^p(-1)^{i-1}\pi(f_{i,i})) = \sum_{i= 1}^p(x_i + \xi_i).
\nonumber
\end{align}
Assume that if $l\leq k-1$, then (\ref{equA5}) holds for any $1\leq p \leq n$.
Let $l = k$, show that (\ref{equA5}) holds for $p = 1$.
We  have
\begin{align}
&\vartheta(\pi(e_{1,1}^{(1+k)})) + \vartheta(\pi(f_{1,1}^{(1+k)})) = \nonumber\\
&\vartheta(\pi(e_{1,1})\pi(e_{1,1}^{(k)}) + (-1)^k\pi(f_{1,1})\pi(f_{1,1}^{(k)})) +
\vartheta(\pi(e_{1,1})\pi(f_{1,1}^{(k)}) + (-1)^k\pi(f_{1,1})\pi(e_{1,1}^{(k)}))= \nonumber\\
&(e_{1,1} + (-1)^kf_{1,1})(\vartheta(\pi(e_{1,1}^{(k)}) + \vartheta(\pi(f_{1,1}^{(k)})) = \nonumber\\
&(x_1 +  (-1)^k\xi_1)
\sum_{i_1 = i_2 =\ldots = i_k = 1}
(x_{i_1} + (-1)^{k-1}\xi_{i_1}) \ldots (x_{i_{k-1}} - \xi_{i_{k-1}})(x_{i_{k}} + \xi_{i_{k}}) = \nonumber\\
&\sum_{i_1 = i_2 =\ldots = i_{k+1} = 1}
(x_{i_1} + (-1)^{k}\xi_{i_1}) \ldots (x_{i_{k}} - \xi_{i_{k}})(x_{i_{k+1}} + \xi_{i_{k+1}}).
\nonumber
\end{align}
Let $l = k$ and assume that (\ref{equA5}) holds for $p\leq m$. Show that it holds for $p=  m+1$.
By induction hypothesis we have
\begin{align}
&\vartheta(\pi(e_{m+1,1}^{(m+1+k)})) + \vartheta(\pi(f_{m+1,1}^{(m+1+k)})) =
\vartheta(\pi(e_{m,1}^{(m+k)})) + \vartheta(\pi(f_{m,1}^{(m+k)})) + \nonumber\\
&(e_{m+1,m+1} + (-1)^{m+k}f_{m+1,m+1})
\vartheta(\pi(e_{m+1,1}^{(m+k)})) + \vartheta(\pi(f_{m+1,1}^{(m+k)})) = \nonumber\\
&\sum_{m\geq i_1\geq i_2\geq\ldots \geq i_{k+1}\geq 1}
(x_{i_1} + (-1)^{k}\xi_{i_1}) \ldots (x_{i_{k}} - \xi_{i_{k}})(x_{i_{k+1}} + \xi_{i_{k+1}}) + \nonumber\\
&(x_{m+1} + (-1)^{k}\xi_{m+1})
\sum_{m+1\geq i_1\geq i_2\geq\ldots \geq i_{k}\geq 1}
(x_{i_1} + (-1)^{k-1}\xi_{i_1}) \ldots (x_{i_{k-1}} - \xi_{i_{k-1}})(x_{i_{k}} + \xi_{i_{k}}) = \nonumber\\
&\sum_{m+1\geq i_1\geq i_2\geq\ldots \geq i_{k+1}\geq 1}
(x_{i_1} + (-1)^{k}\xi_{i_1}) \ldots (x_{i_{k}} - \xi_{i_{k}})(x_{i_{k+1}} + \xi_{i_{k+1}}).
\nonumber
\end{align}
Thus (\ref{equA5}) is proven. In particular, if $p = n$ we obtain (\ref{equA2}).
\end{proof}
\begin{proposition}\label{corhc}
\begin{equation}\label{equA3}
\pi(e_{n,1}^{(n+1)}) =\pi({1\over 2} \sum_{i = 1}^n e_{i,i}^2 + \sum_{i = 1}^{n-1}e_{i,i+1} + \sum_{i<j}(-1)^{i-j}f_{i,i}f_{j,j} + {1\over 2}z^2 - z),
\end{equation}
and
\begin{equation}\label{equA4}
\vartheta (\pi(e_{n,1}^{(n+1)}))={1\over 2} \sum_{i = 1}^n x_{i}^2 +
 \sum_{i<j}\xi_{i}\xi_{j} + {1\over 2}z^2 - z.
 \end{equation}
\end{proposition}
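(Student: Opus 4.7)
The plan is to handle the two identities separately: (\ref{equA4}) follows by direct specialization of Proposition \ref{HCgenerators}, whereas (\ref{equA3}) will be obtained by unwinding the recursion (\ref{equG12}) all the way down.

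For (\ref{equA4}), the first step is to substitute $k=2$ into the formula of Proposition \ref{HCgenerators}, which gives
\[
\vartheta(\pi(e_{n,1}^{(n+1)})) = \Big[\sum_{i_1 \geq i_2}(x_{i_1} - \xi_{i_1})(x_{i_2} + \xi_{i_2})\Big]_{even}.
\]
Next, note that the even part of $(x_{i_1} - \xi_{i_1})(x_{i_2} + \xi_{i_2})$ is $x_{i_1}x_{i_2} - \xi_{i_1}\xi_{i_2}$. Splitting the sum into diagonal ($i_1 = i_2$) and strict ($i_1 > i_2$) pieces, and using $\xi_i^2 = x_i$ (which follows from $[f_{i,i},f_{i,i}] = 2e_{i,i}$), the identity $\sum_{i_1 > i_2} x_{i_1} x_{i_2} = \frac{1}{2}(z^2 - \sum_i x_i^2)$, together with the anticommutation $\xi_{i_1}\xi_{i_2} = -\xi_{i_2}\xi_{i_1}$ for $i_1 \neq i_2$, one rearranges the result into the right-hand side of (\ref{equA4}).

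For (\ref{equA3}), I would apply (\ref{equG12}) iteratively. Each application sends $(m, l+1)$ to $(m-1, l)$, so starting at $(n, n+1)$ and unwinding through $(n-1, n), (n-2, n-1), \ldots, (1,2)$ yields a telescoping identity $\pi(e_{n,1}^{(n+1)}) = \sum_{p=1}^n S_p$, where $S_p$ is the residual at the $p$-th step. By (\ref{equG8}) and (\ref{equG15}), the inputs $\pi(e_{k,1}^{(p)})$ equal $\sum_{i \leq p}e_{i,i}$ for $k=p$, equal $1$ for $k=p+1$, and vanish for $k > p+1$; analogously for $f$ with $\sum_{i \leq p}(-1)^{i-1}f_{i,i}$ and $0$. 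Substituting these produces
\[
S_p = e_{p,p}\sum_{i=1}^p e_{i,i} + \epsilon_p\, e_{p,p+1} + (-1)^p f_{p,p}\sum_{i=1}^p (-1)^{i-1}f_{i,i},
\]
where $\epsilon_p = 1$ for $p < n$ and $\epsilon_n = 0$. Summing each group then recovers the three terms on the right of (\ref{equA3}): the even double sum yields $\sum_{i \leq j}e_{i,i}e_{j,j} = \frac{1}{2}z^2 + \frac{1}{2}\sum_i e_{i,i}^2$; the middle piece gives $\sum_{i=1}^{n-1}e_{i,i+1}$; and the odd double sum splits into the diagonal part $-\sum_p f_{p,p}^2 = -z$ (using $f_{p,p}^2 = e_{p,p}$) and an off-diagonal part that reindexes via $f_{p,p}f_{i,i} = -f_{i,i}f_{p,p}$ to $\sum_{i<j}(-1)^{i-j}f_{i,i}f_{j,j}$.

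The hardest part will be the sign and boundary bookkeeping. Specifically, one must verify that the combination of $(-1)^p$, $(-1)^{i-1}$, and the anticommutation sign from $f_{p,p}f_{i,i}$ produces precisely $(-1)^{i-j}$ after relabeling $p=j$. Additionally, the base case $p=1$ needs to be checked separately, since the derivation of (\ref{equG12}) in Lemma \ref{claim1} required $m \geq 2$: a direct computation of $\pi(e_{1,1}^{(2)})$ from (\ref{equG1}), using that $y(a - \chi(a)) \in I_\chi$ for $y \in U(\g)$ and $a \in \m$, gives $e_{1,1}^2 + e_{1,2} - e_{1,1}$, which agrees with $S_1$ and closes the iteration.
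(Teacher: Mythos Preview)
Your proposal is correct. For (\ref{equA3}) your telescoping of the recursion (\ref{equG12}) is exactly the paper's argument repackaged: the paper proves by induction on $m$ that $\pi(e_{m,1}^{(m+1)})$ has the closed form (\ref{equA6}), using the same base case $\pi(e_{1,1}^{(2)})=\pi(e_{1,1}^2+e_{1,2}-e_{1,1})$ and the same inductive identity
\[
\pi(e_{m+1,1}^{(m+2)})=\pi(e_{m,1}^{(m+1)})+\pi(e_{m+1,m+1})\pi(e_{m+1,1}^{(m+1)})+\pi(e_{m+1,m+2})+(-1)^{m+1}\pi(f_{m+1,m+1})\pi(f_{m+1,1}^{(m+1)}),
\]
which is precisely your $S_{m+1}$. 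For (\ref{equA4}) you take a slightly different route: you specialize Proposition~\ref{HCgenerators} at $k=2$ and simplify the even part of $\sum_{i_1\ge i_2}(x_{i_1}-\xi_{i_1})(x_{i_2}+\xi_{i_2})$, whereas the paper instead applies $\vartheta$ directly to (\ref{equA3}) (killing the $e_{i,i+1}$ terms and substituting $x_i=e_{i,i}$, $\xi_i=(-1)^{i+1}f_{i,i}$). Both routes are immediate; yours has the virtue of making (\ref{equA4}) independent of (\ref{equA3}).
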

\begin{proof}
We will prove by induction on $m$ that for $1\leq m\leq n$
\begin{equation}\label{equA6}
\pi(e_{m,1}^{(m+1)}) =\pi({1\over 2} \sum_{i = 1}^m e_{i,i}^2 +
\sum_{i = 1}^{{min}(m, n-1)}e_{i,i+1} +
\sum_{1\leq i<j\leq m}(-1)^{i-j}f_{i,i}f_{j,j}
 + {1\over 2}(\sum_{i = 1}^m e_{i,i})^2 - \sum_{i = 1}^m e_{i,i}).
 \end{equation}
 If $m = 1$, then
 $$\pi(e_{1,1}^{(2)}) = \pi(e_{1,1}^{2} + e_{1,2} - f_{1,1}^{2}) =
 \pi(e_{1,1}^{2} + e_{1,2} - e_{1,1}).$$
 Assume that (\ref{equA6}) holds for $m$. By (\ref{equG1})
 $$e_{m+1,1}^{(m+2)} = \sum_{i = 1}^n e_{m+1,i}e_{i,1}^{(m+1)} -
(-1)^{m}\sum_{k = i}^n f_{m+1,i}f_{i,1}^{(m+1)}.$$
Then by (\ref{equ2}) and (\ref{equG8})
$$\pi(e_{m+1,1}^{(m+2)}) = \pi(e_{m,1}^{(m+1)}) +
\pi(e_{m+1,m+1})\pi(e_{m+1,1}^{(m+1)}) + \pi(e_{m+1,m+2})\pi(e_{m+2,1}^{(m+1)}) -
(-1)^{m}\pi(f_{m+1,m+1})\pi(f_{m+1,1}^{(m+1)}).$$
By induction hypothesis and using (\ref{equG15}) we have
\begin{align}
&\pi(e_{m+1,1}^{(m+2)}) = \pi\Big({1\over 2} \sum_{i = 1}^m e_{i,i}^2 +
\sum_{i = 1}^{{min}(m, n-1)}e_{i,i+1} +
\sum_{1\leq i<j\leq m}(-1)^{i-j}f_{i,i}f_{j,j}
 + {1\over 2}(\sum_{i = 1}^m e_{i,i})^2 - \sum_{i = 1}^m e_{i,i}\Big) +\nonumber\\
&\pi\Big(e_{m+1,m+1}(\sum_{i = 1}^{m+1} e_{i,i}) + e_{m+1,m+2} -
(-1)^{m}f_{m+1,m+1}(\sum_{i = 1}^{m+1}(-1)^{i-1} f_{i,i})\Big)=\nonumber\\
&\pi\Big({1\over 2} \sum_{i = 1}^{m+1} e_{i,i}^2 +
\sum_{i = 1}^{{min}(m+1, n-1)}e_{i,i+1} +
\sum_{1\leq i<j\leq m+1}(-1)^{i-j}f_{i,i}f_{j,j}
 + {1\over 2}(\sum_{i = 1}^{m+1} e_{i,i})^2 - \sum_{i = 1}^{m+1} e_{i,i}\Big).
 \nonumber
\end{align}
 Thus (\ref{equA6}) is proven. In particular, if $m = n$ we obtain (\ref{equA3}).
 Finally, applying $\vartheta$ to (\ref{equA3}) we obtain (\ref{equA4}).
\end{proof}

\subsection{On the center of  $W_{\chi}$}
Recall that we denote by $\mathcal A$ the image $\vartheta(W_\chi)$ of  $W_\chi$ in $U(\h)$.
Set $\mathcal A^0=\mathcal A\cap U(\h_{\bar 0})$.

\begin{lemma}\label{specialgen}
Define odd elements $\Phi_0, \ldots, \Phi_{n-1}$ of $W_{\chi}$ as follows:
\begin{align}
&\Phi_0 = \pi(f_{n,1}^{(n)}) = \pi(H_0),\nonumber\\
&\Phi_k = \Big(\frac{1}{2}\operatorname{ad}(\pi(e_{n,1}^{(n+1)}))\Big)^k (\Phi_0), \quad k = 1, \ldots , n-1.\nonumber
\end{align}
\noindent
Then
(a) $P(\Phi_k)=H_k,$

(b)
$$[\Phi_m, \Phi_p] = 0, \hbox{ if } m+p  \hbox{ is odd },$$

(c) there exist $z_0, z_2, \ldots \in \pi(Z(Q(n)))$ such that
$$[\Phi_m, \Phi_p] = (-1)^m z_{m+p} \hbox{ if } m+p  \hbox{ is even. }$$
\end{lemma}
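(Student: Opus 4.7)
My plan is to prove (a) by induction on $k$ using the Kazhdan filtration, and then to deduce (b) and (c) together by induction on $N=m+p$, exploiting the fact that $X=\pi(e_{n,1}^{(n+1)})$ is even so that $D=\tfrac{1}{2}\operatorname{ad}(X)$ is an even derivation of the super-bracket.

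For (a), the base case $P(\Phi_0)=H_0$ is immediate since $\Phi_0=\pi(H_0)\in\g_0$. By Lemma~\ref{claim2}, $P(X)=e$, so on the Kazhdan associated graded $D$ is represented by the Poisson derivation $\tfrac{1}{2}\{e,\cdot\}$. A direct matrix computation in $\g$, using $[e_{i,i+1},f_{j,j+k}]=\delta_{i+1,j}f_{i,j+k}-\delta_{i,j+k}f_{j,i+1}$ and telescoping, gives $[e,H_k]=2H_{k+1}$ for $0\le k\le n-2$, and since $H_{k+1}\neq 0$ in $S(\g^\chi)$ the induction yields $P(\Phi_{k+1})=H_{k+1}$.

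For (b) and (c), the essential tool is the super-Leibniz identity
\[ D[\Phi_m,\Phi_p]=[\Phi_{m+1},\Phi_p]+[\Phi_m,\Phi_{p+1}], \]
together with the base case $[\Phi_0,\Phi_0]=\pi([H_0,H_0])=2\pi(z)\in\pi(Z(Q(n)))$, where we set $z_0:=2\pi(z)$. The induction alternates by parity. Given (c) at level $2k$, each bracket at that level is central and hence killed by $D$; applying this to the diagonal case $[\Phi_k,\Phi_k]$ and using the super-symmetry $[\Phi_{k+1},\Phi_k]=[\Phi_k,\Phi_{k+1}]$ of odd-odd brackets forces $[\Phi_k,\Phi_{k+1}]=0$, and the sign-flipping recursion $[\Phi_{m+1},\Phi_p]=-[\Phi_m,\Phi_{p+1}]$ then spreads zeros across the entire level $2k+1$, proving (b). Conversely, given (b) at level $2k-1$, the same identity forces $[\Phi_a,\Phi_b]=(-1)^a z_{2k}$ at level $2k$, where $z_{2k}:=[\Phi_0,\Phi_{2k}]$.

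The hardest step will be verifying that $z_{2k}\in\pi(Z(Q(n)))$. By Corollary~\ref{cent2} it suffices to show $z_{2k}$ is central in $W_\chi$. The identity $[z_{2k},\Phi_0]=0$ follows from super-Jacobi applied to $[[\Phi_0,\Phi_{2k}],\Phi_0]=[\Phi_0,[\Phi_{2k},\Phi_0]]+[\Phi_{2k},[\Phi_0,\Phi_0]]$: the second term vanishes because $[\Phi_0,\Phi_0]=2\pi(z)$ is central, while the first equals $-[z_{2k},\Phi_0]$ after applying super-symmetry, yielding $2[z_{2k},\Phi_0]=0$. For the remaining generators I would use the Kazhdan filtration: a direct computation in $S(\g)$, analogous to the one above, shows $\{H_m,H_p\}=2(-1)^m e^{m+p}$ when $m+p$ is even, so $P(z_{2k})=2e^{2k}$, and the telescoping identity based on $(-1)^{2k}-1=0$ yields $[e^{2k},H_j]=0$ in $\g$ for all $j$, i.e.\ $e^{2k}$ is Poisson-central in $S(\g^\chi)$. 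Consequently every commutator $[z_{2k},Y]$ with a generator $Y$ has strictly smaller Kazhdan degree than expected, and a downward induction on Kazhdan degree, using Sergeev's central elements $\sum_i e_{i,i}^{(2m+1)}$ to realize the Poisson-central classes, should let one conclude $[z_{2k},Y]=0$. The main technical obstacle will be to verify that the Kazhdan leading terms of the projected Sergeev elements actually span the Poisson center of $S(\g^\chi)_{\bar 0}$ at every requisite degree, which is what would close the downward induction cleanly.
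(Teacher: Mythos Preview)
Your argument for (a) is correct and matches the paper. The difficulty is in (b) and (c), where your approach diverges from the paper's and contains a genuine gap.

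The paper does not run an alternating induction in $W_\chi$. Instead it transports everything through the injective Harish--Chandra map $\vartheta:W_\chi\hookrightarrow U(\h)$. From the explicit formula~(\ref{equA4}) for $\vartheta(\pi(e_{n,1}^{(n+1)}))$ one sees that each $\vartheta(\Phi_k)=\sum_j\phi_j^{(k)}\xi_j$ is \emph{linear} in the $\xi_j$; hence $[\vartheta(\Phi_m),\vartheta(\Phi_p)]\in\mathbb C[x_1,\dots,x_n]=U(\h_{\bar 0})$, which is the center of $U(\h)$. This single observation gives the recursion $[\vartheta(\Phi_{m+1}),\vartheta(\Phi_p)]=-[\vartheta(\Phi_m),\vartheta(\Phi_{p+1})]$ unconditionally, with no need to establish centrality in $W_\chi$ along the way. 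Injectivity of $\vartheta$ then yields (b) directly, and since $\vartheta(z_i)\in\mathcal A^0$, part (c) follows from Lemma~\ref{a0}.

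Your scheme, by contrast, needs centrality of $z_{2k}$ in $W_\chi$ at each even stage in order to conclude $D[\Phi_m,\Phi_p]=0$ and pass to the next odd level. But your proposed proof of centrality does not close. You correctly get $[z_{2k},\Phi_0]=0$ from Jacobi, yet to deduce $[z_{2k},\Phi_j]=0$ for $j\geq 1$ you would need $[z_{2k},X]=0$, and computing $[X,z_{2k}]=2D[\Phi_0,\Phi_{2k}]=2([\Phi_1,\Phi_{2k}]+[\Phi_0,\Phi_{2k+1}])$ lands you at level $2k+1$, which is precisely what you are trying to prove next. Your fallback---a downward Kazhdan induction using that $e^{2k}$ is Poisson-central in $S(\g^\chi)$---does not bridge the gap either: knowing the leading term is Poisson-central only tells you the commutator has lower Kazhdan degree, not that it vanishes, and you yourself flag that controlling the lower-order corrections via Sergeev's elements is an unresolved obstacle. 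The paper's use of $\vartheta$ is exactly the device that breaks this circularity.
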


\begin{proof} Let $X,Y\in W_{\chi}$.  To prove (a) observe that if $P(X),P(Y)\in \g^\chi$ and $[P(X),P(Y)]\neq 0$, then
$P([X,Y])=[P(X),P(Y)]$. Since $P(\pi(e_{n,1}^{(n+1)}))=e$ and $P(\Phi_0)=H_0$,
the statement follows from the relation
$$H_k= (\frac{1}{2}\operatorname{ad}(e))^k (H_0).$$
To prove (b) and (c) we use $\vartheta$.
We first notice that (\ref{equA4}) implies
$$\vartheta(\Phi_k)=\sum_{j=1}^n \phi^{(k)}_j\xi_j$$
for some polynomial $ \phi^{(k)}_j\in\mathbb C[x_1,\dots,x_n]$ of degree $k$.
Hence $[\vartheta(\Phi_m),\vartheta(\Phi_p)]\in \mathbb C[x_1,\dots,x_n]$.
Since $x_i$ lie in the center of $\h$, we get
\begin{align}
&[\vartheta(\Phi_{m+1}),\vartheta(\Phi_p)]=
\frac{1}{2}[[\vartheta(\pi(e_{n,1}^{(n+1)})),\vartheta(\Phi_m)],\theta(\Phi_p)]=\nonumber\\
&-\frac{1}{2}[\vartheta(\Phi_{m}),[\vartheta(\pi(e_{n,1}^{(n+1)})),\vartheta(\Phi_p)]]=
-[\vartheta(\Phi_{m}),\vartheta(\Phi_{p+1})].
\nonumber
\end{align}
Since $\vartheta$ is injective, that implies
$$[\Phi_p, \Phi_q]=(-1)^{r-p}[\Phi_r, \Phi_s],$$
if $p+q=r+s$.

In particular, if $p+q$ is odd we have
$$[\Phi_p, \Phi_q]=(-1)^{q-p}[\Phi_q, \Phi_p]=0.$$
This implies (b).

To prove (c)  we set
$$z_i = [\Phi_0, \Phi_i]\quad\hbox{ }\hbox{ }\hbox{ for even }i, \quad 0\leq i\leq n-1.$$
Since $\vartheta(z_i)\in\mathcal A^0$, (c) follows from Lemma \ref{a0}.
\end{proof}

\begin{lemma}\label{a0}  $\mathcal A^0=\vartheta(\pi(Z(\g)))$.
\end{lemma}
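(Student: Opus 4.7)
The plan is to prove the two inclusions separately.

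The inclusion $\vartheta(\pi(Z(\g)))\subseteq\mathcal A^0$ is immediate: any $z\in Z(\g)$ commutes with $\m$ (being $\g$-central), so $\pi(z)\in W_\chi$; and by Sergeev's theorem the center $Z(Q(n))$ is generated by the even elements $\sum_{i=1}^n e_{i,i}^{(2m+1)}$, whose image under $\vartheta\circ\pi$ therefore lies in the even subalgebra $U(\h_{\bar 0})=\C[x_1,\ldots,x_n]$.

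For the reverse inclusion $\mathcal A^0\subseteq\vartheta(\pi(Z(\g)))$, I would proceed by a Kazhdan-filtration (symbol) argument. By Corollary \ref{cor}, $\hbox{Gr}_K W_\chi\simeq S(\g^\chi)$, and since $\vartheta$ is injective by Theorem \ref{HCinjective}, $\hbox{Gr}_K\vartheta$ embeds $S(\g^\chi)$ into $\hbox{Gr}_K U(\h)=S(\h)=\C[x_1,\ldots,x_n]\otimes\Lambda(\xi_1,\ldots,\xi_n)$. For $a\in\mathcal A^0\subset\C[x_1,\ldots,x_n]$ the top Kazhdan symbol is a polynomial in the $x_i$ alone, so the corresponding preimage symbol in $S(\g^\chi)$ can only be built from the even generators $z,e,\ldots,e^{n-1}$ together with even products of the odd generators $H_0,\ldots,H_{n-1}$ (which, under $\vartheta$, acquire $x$-contributions via the relation $\xi_j^2=x_j$ in $U(\h)$).

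The heart of the proof is to match these top symbols with those of $\vartheta(\pi(\hat z))$ for $\hat z\in Z(\g)$. Concretely, I would compute the top Kazhdan symbols of $\vartheta(\pi(\sum_{i=1}^n e_{i,i}^{(2m+1)}))$ for Sergeev's central generators, extending the inductive technique of Lemma \ref{claim2} and Proposition \ref{HCgenerators} from the $e_{n,1}^{(m)}$, $f_{n,1}^{(m)}$ to the diagonal elements $e_{i,i}^{(m)}$. By Sergeev's classical description of the Harish--Chandra image of $Z(Q(n))$ these symbols fill out the ring of ``queer symmetric polynomials'' in $\C[x_1,\ldots,x_n]^{S_n}$ (the subring generated by the odd power sums $p_{2m+1}$), which by the previous paragraph ought to coincide with the ring of top symbols of $\mathcal A^0$. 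A standard induction on Kazhdan degree then finishes: given $a\in\mathcal A^0$, subtract $\vartheta(\pi(\hat z))$ for a suitable polynomial in Sergeev's generators chosen to match the top symbol of $a$, and invoke the inductive hypothesis on the lower-degree remainder.

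The principal obstacle is the explicit symbol computation for Sergeev's family and the verification that it generates precisely the correct subring of $\C[x_1,\ldots,x_n]$: the recursion \eqref{equG1} mixing $e$- and $f$-products produces lower-order corrections via $\xi_j^2=x_j$ in $U(\h)$ that must be tracked carefully, and one must check that the ring of top symbols so produced is neither too large nor too small to match $\mathcal A^0$. An alternative route, avoiding infinitely many symbol computations, is to establish equality of these two subrings of $\C[x_1,\ldots,x_n]$ by a direct Hilbert-series comparison, using Corollary \ref{cor} for the graded structure of $\mathcal A^0$ on one side and Sergeev's structural theorem for $\vartheta(\pi(Z(\g)))$ on the other.
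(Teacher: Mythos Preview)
Your easy inclusion $\vartheta(\pi(Z(\g)))\subseteq\mathcal A^0$ is correct and is also how the paper treats it.

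For the hard inclusion, however, your symbol/filtration approach has a genuine gap. The map $\vartheta:W_\chi\to U(\h)$ is injective, but the induced map $\hbox{Gr}_K\vartheta:\hbox{Gr}_K W_\chi\simeq S(\g^\chi)\to S(\h)$ is \emph{not} an embedding: the generators $e,e^2,\ldots,e^{n-1},H_1,\ldots,H_{n-1}$ of $\g^\chi$ all lie in $\n$, and $\vartheta$ kills $\n U(\b)$, so at the symbol level these generators map to zero. (Concretely, the top symbol of $\pi(e_{n,1}^{(n+1)})$ in $S(\b)$ contains $e=\sum e_{i,i+1}$, which dies under $\vartheta$; what survives in $U(\h)$ comes from lower-weight terms of the same Kazhdan degree.) So there is no clean ``preimage symbol'' correspondence to run induction on. Your Hilbert-series alternative has the same problem: Corollary~\ref{cor} gives the graded structure of $W_\chi$, not of $\mathcal A^0=\vartheta(W_\chi)\cap U(\h_{\bar 0})$, and the intersection condition in $U(\h)$ does not translate into a tractable condition on $S(\g^\chi)$.

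The paper's argument is entirely different and avoids filtrations. It uses Sergeev's description of the Harish--Chandra image of $Z(Q(n))$: this image is exactly the ring of symmetric polynomials $p\in\C[x_1,\ldots,x_n]$ satisfying $\frac{\partial p}{\partial x_i}-\frac{\partial p}{\partial x_j}\in (x_i+x_j)\C[x_1,\ldots,x_n]$. So it suffices to show every $p\in\mathcal A^0$ is symmetric and satisfies this condition. The paper does this by reduction to $Q(2)$: for each $i$ the Whittaker conditions $\pi(\operatorname{ad}e_{i+1,i}(u))=0$, $\pi(\operatorname{ad}f_{i+1,i}(u))=0$ involve only the subalgebra $\s_i\cong Q(2)$, and an explicit computation of $\mathcal A^0$ for $Q(2)$ (using the generators from Lemma~\ref{specialgen} and Proposition~\ref{HCgenerators}) shows directly that it consists of symmetric polynomials satisfying the supersymmetry condition in $x_i,x_{i+1}$. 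The missing idea in your proposal is precisely this: rather than trying to match top symbols, one should exploit the \emph{defining} $\m$-invariance conditions of $W_\chi$, which decompose along the simple root subalgebras $\s_i\cong Q(2)$.
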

\begin{proof} It is not hard to see that the restriction of $\vartheta$ on $Z(\g)$ coincides with
the standard Harish-Chandra homomorphism.
Thus, from Sergeev's result, \cite{S1}, we know that $\vartheta(Z(\g))$ coincides with the space of symmetric polynomials $p$ in $x_1,\dots,x_n$ satisfying
the additional condition
\begin{equation}\label{supercent}
\frac{\partial p}{\partial x_i}-\frac{\partial p}{\partial x_j}\in(x_i+x_j)U(\h_{\bar 0})
\end{equation}
for all $i<j\leq n$.
In view of Proposition  \ref{HCinjective}, it is sufficient to prove that if $p\in \mathcal A^0$, then $p$ is symmetric and  satisfies
(\ref{supercent}).

First, we will prove the last assertion in the case when $n=2$. It follows from Lemma \ref{specialgen} and
Theorem \ref{HCgenerators} that $\mathcal A$ is generated by $z_0=2x_1+2x_2$, $\phi_0=\xi_1+\xi_2$, $\phi_1=x_2\xi_1-x_1\xi_2$ and
$z_1=-\vartheta(\pi(e_{2,1}^3))+{1\over 4}z_0^2 -{1\over 2}z_0=x_1x_2-\xi_1\xi_2$. By direct calculation we can check that
$$\phi_0^2={1\over 2}z_0,\phi_0\phi_1=-{1\over 2}z_0\xi_1\xi_2,\phi_1^2={1\over 2}z_0x_1x_2, [z_1,\phi_0]=-2\phi_1, [z_1,\phi_1]=2x_1x_2\phi_0.$$
Let $\mathcal A_0$ denote the even part of $\mathcal A$. The above relations imply that  $\mathcal A_0$ is a subring in
$\mathbb C[z_0,x_1x_2]\oplus \mathbb C[z_0,x_1x_2]\xi_1\xi_2$. Moreover, $\mathcal A_0/(z_0\mathcal A_0)=\mathbb C[z_1].$
Therefore $\mathcal A^0=\mathbb C\oplus z_0\mathbb C[z_0,x_1x_2]$, i.e. $\mathcal A^0$ consists of symmetric polynomials satisfying (\ref{supercent}).

Let $p=\vartheta(u)$ for some $u\in W_\chi\subset U(\b)$. Then we have
\begin{equation}\label{cond1}
\pi(\operatorname{ad}e_{i+1,i}(u))=0
\end{equation}
and
\begin{equation}\label{cond2}
\pi(\operatorname{ad}f_{i+1,i}(u))=0
\end{equation}
for all $i=1,\dots,n-1$.

Let $\s_i$ be the subalgebra in $\g$ generated by $e_{i,i+1}, e_{i,i},  e_{i+1,i}, e_{i+1,i+1}$, $f_{i,i+1}$, $f_{i,i}$,  $f_{i+1,i}$, $f_{i+1,i+1}$.
Clearly, $\s_i$ is isomorphic to $Q(2)$.
Note that the orthogonal compliment $\s_i^\perp$ (with respect to the invariant form) is $\operatorname{ad}\s_i$-invariant,
$\b\cap\s_i^\perp$ is a Lie subalgebra and, moreover,
$$\pi(\operatorname{ad}e_{i+1,i}(u))=0,\quad \pi(\operatorname{ad}f_{i+1,i}(u))=0$$
whenever $u\in U(\b\cap \s_i^\perp).$

Therefore any $u\in W_\chi$ satisfying (\ref{cond1}) and (\ref{cond2}) for a given $i$ can be written in the form
$u=\sum u_j v_j$
for some $u_j\in U(\s_i\cap\b)$ satisfying (\ref{cond1}) and (\ref{cond2}) and arbitrary $v_j\in U(\s_i^\perp\cap\b)$.

Thus, (\ref{cond1}) and (\ref{cond2}) can be checked locally for $\s_i$.
Indeed, if $\vartheta(u)\in U(\h_{\bar 0})$, then
$$\vartheta(u)=\sum \vartheta(u_j)\vartheta(v_j),$$ where
$\vartheta(v_j)\in U(\h_{\bar 0}\cap \s_i^\perp)=\mathbb C[x_1,\dots,x_{i-1},x_{i+2},\dots,x_n]$ and
$\vartheta(u_j)\in U(\h_{\bar 0}\cap \s_i)=\mathbb C[x_i,x_{i+1}]$.
Since we already know the result for $Q(2)$, we obtain $\vartheta(u_j)(x_{i+1},x_i)=\vartheta(u_j)(x_i,x_{i+1})$ and
$\frac{\partial \vartheta(u_j)}{\partial x_i}-\frac{\partial \vartheta(u_j)}{\partial x_{i+1}}\in(x_i+x_{i+1})U(\h_{\bar 0}\cap\s_i)$.
Therefore $\vartheta(u)$ is invariant under all adjacent transpositions and therefore is symmetric. Moreover,
$$\frac{\partial\vartheta(u)}{\partial x_i}-\frac{\partial\vartheta(u)}{\partial x_{i+1}}\in(x_i+x_{i+1})U(\h_{\bar 0}).$$
Since $\vartheta(u)$ is symmetric, the last condition implies (\ref{supercent})
for $\vartheta(u)$.
\end{proof}

Let $\phi_k:=\vartheta (\Phi_k)$. Consider $U(\h)$ as a free $U(\h_{\bar 0})$-module and let $V$ denote the free submodule generated by $\xi_1,\dots,\xi_n$.
Then $V$ is equipped with $U(\h_{\bar 0})$-valued bilinear symmetric form $B(x,y)=[x,y]$. If $\omega=\vartheta(\pi({1\over 2}e_{n,1}^{(n+1)}))$, then
$T=\operatorname{ad\omega}$ is an $U(\h_{\bar 0})$-linear operator. As
we have seen in the proof of Lemma \ref{specialgen}, $T$ is
skew-symmetric with respect to the form $B$, i.e.
$$B(Tv,w)+B(v,Tw)=0.$$
Furthermore, in these terms $\phi_k=T^{k}(\phi_0)$. The matrix of $T$ in the standard basis $\xi_1,\dots,\xi_n$ has $0$ on the diagonal and
\begin{equation}\label{equmatrix}
t_{ij}=\left\{ \begin{array}{cc}
 x_j &   if \quad  i<j,\\
 -x_j &  if \quad  i>j.\\
\end{array}\right.
\end{equation}

\begin{lemma}\label{charpol} The characteristic polynomial $\operatorname{det} (\lambda\operatorname{Id}-T)$ of $T$ equals
$$\lambda^n+\sigma_2\lambda^{n-2}+\dots+\sigma_{2k}\lambda^{n-2k},$$ where
$\sigma_r=\displaystyle\sum_{i_1<\dots<i_r}x_{i_1}\dots x_{i_r}$ are the elementary symmetric functions.
\end{lemma}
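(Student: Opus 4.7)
The plan is to reduce the computation to evaluating the determinant of a small ``sign matrix''. The first observation is that the formula (\ref{equmatrix}) admits the clean factorization $T = JD$, where $D = \operatorname{diag}(x_1,\dots,x_n)$ and $J$ is the skew-symmetric $n\times n$ matrix with $J_{ij} = \operatorname{sgn}(j-i)$ for $i \neq j$ and $J_{ii} = 0$. Substituting into the standard expansion of the characteristic polynomial in terms of principal minors,
$$\operatorname{det}(\lambda\operatorname{Id} - T) = \sum_{k=0}^{n} (-1)^k \lambda^{n-k} \sum_{|I|=k} \operatorname{det}(T_I),$$
one sees that for every $I = \{i_1 < \cdots < i_k\}$ the principal submatrix factors as $T_I = J_k\, D_I$, since $\operatorname{sgn}(i_b - i_a) = \operatorname{sgn}(b-a)$ when the indices are listed in order. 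Here $D_I = \operatorname{diag}(x_{i_1},\dots,x_{i_k})$ and $J_k$ denotes the $k\times k$ sign matrix, with the point being that the principal submatrix of $J$ indexed by any sorted $I$ is intrinsically the same matrix $J_k$. Consequently $\operatorname{det}(T_I) = \operatorname{det}(J_k)\cdot x_{i_1}\cdots x_{i_k}$, and summing over all $I$ of fixed size yields $\sum_{|I|=k}\operatorname{det}(T_I) = \operatorname{det}(J_k)\,\sigma_k$.

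The remaining step is to compute $\operatorname{det}(J_k)$. Since $J_k$ is skew-symmetric, $\operatorname{det}(J_k) = 0$ for odd $k$, which automatically eliminates all odd elementary symmetric functions from the characteristic polynomial. For $k = 2m$, the identity $\operatorname{det}(J_{2m}) = \operatorname{Pf}(J_{2m})^2$ reduces the problem to showing $\operatorname{Pf}(J_{2m}) = 1$, which I would prove by induction on $m$ using the cofactor expansion of the Pfaffian along the first row. Since $(J_{2m})_{1j} = 1$ for every $j \geq 2$ and the submatrix obtained by deleting rows and columns $1$ and $j$ is again a sign matrix of size $2m-2$, the expansion reduces to
$$\operatorname{Pf}(J_{2m}) = \left(\sum_{j=2}^{2m}(-1)^j\right)\operatorname{Pf}(J_{2m-2}) = \operatorname{Pf}(J_{2m-2}),$$
and the base case $\operatorname{Pf}(J_2) = 1$ closes the induction.

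Assembling these ingredients, the coefficient of $\lambda^{n-2m}$ in $\operatorname{det}(\lambda\operatorname{Id}-T)$ equals $\sigma_{2m}$ while all odd coefficients vanish, giving the desired formula. No substantial obstacle arises: the factorization $T = JD$ is transparent, and the only numerical point worth checking is that the set $\{2,3,\dots,2m\}$ contains exactly one more even integer than odd integer, making the Pfaffian recursion collapse to a multiplication by $1$. A slightly more structural alternative would be to observe that $T$ is similar over the fraction field of $\mathbb{C}[x_1,\dots,x_n]$ to the honestly skew-symmetric matrix $D^{1/2} J D^{1/2}$ via conjugation by $D^{1/2}$, which gives an \emph{a priori} reason for only even powers of $\lambda$ to survive in $\operatorname{det}(\lambda\operatorname{Id}-T)$; the identification of the even coefficients as $\sigma_{2m}$ then proceeds exactly as above.
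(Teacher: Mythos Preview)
Your proof is correct and takes a genuinely different route from the paper's. The paper argues by induction on $n$: it asserts (without detailed justification) that the coefficients $f_{n,i}$ are symmetric in $x_1,\dots,x_n$, observes that $\det T$ equals $x_1\cdots x_n$ for even $n$ and $0$ for odd $n$, notes the recursion $p_n(x_1,\dots,x_{n-1},0;\lambda)=\lambda\,p_{n-1}(x_1,\dots,x_{n-1};\lambda)$ obtained by setting $x_n=0$, and combines these with a degree bound to pin down each coefficient. Your argument is more direct: the factorization $T=JD$ makes every principal minor factor as $\det(J_k)\cdot x_{i_1}\cdots x_{i_k}$, so the whole problem reduces to the single numerical computation $\det(J_{2m})=1$, which you dispatch via the Pfaffian recursion. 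Your approach has the advantage of being entirely self-contained and of making the symmetry in the $x_i$ manifest rather than something to be checked; the paper's approach, by contrast, trades an explicit combinatorial computation for a softer characterization through specialization and degree. One small remark on your closing aside: the conjugation $D^{1/2}(JD)D^{-1/2}=D^{1/2}JD^{1/2}$ takes place not over the fraction field of $\mathbb C[x_1,\dots,x_n]$ but over an extension in which the $\sqrt{x_i}$ are adjoined; this does not affect the conclusion, since the characteristic polynomial is invariant under conjugation in any overring.
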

\begin{proof} Let
$$p_n(x_1,\dots,x_n;\lambda)=\operatorname{det} (\lambda\operatorname{Id}-T)=\lambda^n+\sum_{i=1}^n f_{n,i}(x_1,\dots,x_n)\lambda^{n-i}.$$
Note that $f_{n,i}(x_1,\dots,x_n)$ is a symmetric polynomial, since the substitutions $x_i\mapsto x_j,x_j\mapsto x_i$ preserves the determinant of
$\lambda\operatorname{Id}-T$. It is also easy to calculate that $\operatorname{det} T=x_1\dots x_n$ if $n$ is even. If $n$ is odd, then  $\operatorname{det} T=0$, since
$T$ is skew-symmetric with respect to $B$. Finally, if $x_n=0$ we have a relation
$$p_n(x_1,\dots,x_{n-1},0;\lambda)=\lambda p_{n-1}(x_1,\dots,x_{n-1};\lambda).$$
That implies
$$f_{n,i}(x_1,\dots,x_{n-1},0)=f_{n-1,i}(x_1,\dots,x_{n-1}),$$
for $i\leq n-1$. Since it is also easy to show that the degree of $f_{n,i}$ is $i$, we can finish the proof by induction in $n$.
\end{proof}

\begin{corollary}\label{roots} There exists $s=(s_1,\dots,s_n)\in\mathbb R_{>0}^n$ such that the specialization of $\operatorname{det} (\lambda\operatorname{Id}-T)$
at the point $x_1=s_1,\dots,x_n=s_n$ has distinct eigenvalues.
\end{corollary}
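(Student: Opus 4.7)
The plan is to exhibit an explicit point in $\mathbb{R}_{>0}^n$ and verify by direct computation that the specialized characteristic polynomial has $n$ distinct roots. The natural candidate is $s = (1, 1, \dots, 1)$, which trivially lies in $\mathbb{R}_{>0}^n$.

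First, I would substitute $x_1 = \cdots = x_n = 1$ into the formula of Lemma \ref{charpol}. Since the elementary symmetric polynomial evaluates to $\sigma_{2k}(1,\dots,1) = \binom{n}{2k}$, the specialized characteristic polynomial becomes
$$\chi(\lambda) = \sum_{k=0}^{\lfloor n/2 \rfloor} \binom{n}{2k}\, \lambda^{n-2k}.$$
The key step is to recognize this sum as
$$\chi(\lambda) = \tfrac{1}{2}\bigl((\lambda+1)^n + (\lambda-1)^n\bigr),$$
which is the standard binomial identity isolating the even-indexed terms of $(\lambda \pm 1)^n$.

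Next I would solve $\chi(\lambda) = 0$. The value $\lambda = 1$ is not a root, since then $(\lambda+1)^n = 2^n \neq 0$ while $(\lambda-1)^n = 0$. For $\lambda \neq 1$ the equation rearranges to
$$\left(\frac{\lambda+1}{\lambda-1}\right)^{\!n} = -1,$$
so $w := (\lambda+1)/(\lambda-1)$ must be one of the $n$ distinct $n$-th roots of $-1$, namely $w_k = e^{i\pi(2k+1)/n}$ for $k = 0, 1, \dots, n-1$. The Möbius transformation $\lambda \mapsto (\lambda+1)/(\lambda-1)$ is a bijection of $\mathbb{C}\setminus\{1\}$ onto itself, so distinct values $w_k$ yield distinct eigenvalues
$$\lambda_k = \frac{w_k+1}{w_k-1} = -i\cot\!\left(\frac{\pi(2k+1)}{2n}\right), \qquad k = 0, 1, \dots, n-1.$$
This produces $n$ distinct eigenvalues, proving the corollary.

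There is essentially no obstacle to this argument: the claim is purely existential and the symmetric point $(1, 1, \dots, 1)$ already does the job. The only non-trivial ingredient is the binomial identity recognizing $\chi(\lambda)$ as $\frac{1}{2}((\lambda+1)^n + (\lambda-1)^n)$, after which the eigenvalues can be written down explicitly and their distinctness follows from the injectivity of the Möbius transformation.
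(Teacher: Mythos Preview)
Your proof is correct and considerably simpler than the paper's. The paper argues non-constructively: it observes that the specialization map $\varphi:\mathbb{R}_{>0}^n\to\hbox{Pol}_n^{ev,+}$ has surjective differential at generic points, so its image contains an open set, and then intersects this with the Zariski-open locus of polynomials with distinct roots. You instead exhibit the single explicit point $s=(1,\dots,1)$, recognize the resulting polynomial as $\tfrac{1}{2}((\lambda+1)^n+(\lambda-1)^n)$, and read off the roots directly via a M\"obius transformation. Your argument is fully constructive, uses nothing beyond Lemma \ref{charpol} and the binomial theorem, and even produces the eigenvalues in closed form as $\lambda_k=-i\cot\bigl(\pi(2k+1)/(2n)\bigr)$; the paper's approach, by contrast, would generalize more readily to situations where no such convenient closed form is available. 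One small point worth making explicit in your write-up: none of the $n$-th roots of $-1$ equals $1$ (since $2k+1$ is odd), so the M\"obius inverse is always defined.
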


\begin{proof} Assume that $n=2k$ is even. Let $\hbox{Pol}_n^{ev}$ denote the set of monic even polynomials in $\C[\lambda]$ of degree $n$ and
$\hbox{Pol}_n^{ev,+}$ denote the subset of polynomials with real positive coefficients. Let $\varphi:\mathbb R_{>0}^n\to \hbox{Pol}_n^{ev,+}$ be the specialization map, i.e.
$\varphi(s)$ is the specialization of  $\operatorname{det} (\lambda\operatorname{Id}-T)$ at $s\in\mathbb R_{>0}^n$. From the above Lemma, $d\varphi(s)$ is
surjective for generic  $s\in\mathbb R_{>0}^n$. Therefore $\hbox{Im}\varphi$ contains a non-empty open subset in $\mathbb R_{>0}^n$.

Define the map $\rho:\C^n\to \hbox{Pol}_n^{ev}$  by the formula
$$\rho (t_1,\dots,t_k)=\prod_{i=1}^k(\lambda^2-t_i^2).$$
Obviously, $\rho$ is surjective. Set
$$\mathcal U=\{(t_1,\dots,t_k)\in\C^k \hbox{ }|\hbox{ } t_i\neq \pm t_j{}\,\, \text{for  all}\,\, i\neq j\}.$$
Then $\rho(\mathcal U)$ is Zariski open in $\hbox{Pol}_n^{ev}$. Therefore the intersection $\hbox{Pol}_n^{ev,+}\cap\rho(\mathcal U)$ is a non-empty Zariski open
subset in $\hbox{Pol}_n^{ev,+}$. Hence $\hbox{Pol}_n^{ev,+}\cap\rho(\mathcal U)$ is dense in $\hbox{Pol}_n^{ev,+}$ in the usual topology and the intersection
$\hbox{Im}\varphi\cap\rho(\mathcal U)$ is not empty.
This implies the statement for even $n$.

For odd $n$ the proof is similar and we leave it to the reader.
\end{proof}

\begin{lemma}\label{independence}  $\phi_0,\dots\phi_{n-1}$ are linearly independent over $U(\h_{\bar 0})$.
\end{lemma}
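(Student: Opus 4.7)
The assertion reduces to showing that the $n \times n$ matrix $M$ whose $k$-th column records $\phi_k = T^k\phi_0$ in the basis $\xi_1,\ldots,\xi_n$ of the free $U(\h_{\bar 0})$-module $V$ has nonzero determinant in $R := U(\h_{\bar 0}) = \C[x_1,\ldots,x_n]$. Since $R$ is an integral domain, it suffices to exhibit one specialization $s \in \C^n$ at which $\det M(s) \ne 0$, equivalently at which $\phi_0(s) = (1,1,\ldots,1)$ is a cyclic vector for the specialized operator $T(s) \in \operatorname{End}(\C^n)$.

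By Corollary \ref{roots} we may choose $s \in \mathbb R_{>0}^n$ with $T(s)$ having $n$ distinct eigenvalues. Because $T$ is $B$-skew and at such $s$ the form $B(s) = 2\,\hbox{diag}(s_1,\ldots,s_n)$ is positive definite on $\mathbb R^n$, the eigenvalues of $T(s)$ are purely imaginary and come in conjugate pairs $\pm i\mu_j$ (plus a single zero eigenvalue when $n$ is odd). Fixing eigenvectors $v_j^\pm$ for $\pm i\mu_j$, the $B$-skewness of $T$ together with distinctness of eigenvalues force $B(s)(v_j^\epsilon, v_k^{\epsilon'})$ to vanish unless the two eigenvalues sum to zero, while $B(s)(v_j^+, v_j^-) \ne 0$ (this pairing equals half the positive real number $B(s)(v_j^+ + v_j^-, v_j^+ + v_j^-)$). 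Decomposing $\phi_0(s) = \sum_j (c_j^+ v_j^+ + c_j^- v_j^-)$ and pairing against $v_j^\mp$ yields $c_j^\pm = B(s)(\phi_0, v_j^\mp)/B(s)(v_j^+, v_j^-)$. Thus cyclicity of $\phi_0(s)$ reduces to the non-vanishing of $B(s)(\phi_0, v_j^\pm) = 2\sum_i s_i (v_j^\pm)_i$ for every $j$.

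The main obstacle is certifying this non-vanishing at a concrete $s$. A direct route is $s = (1,1,\ldots,1)$: then $T(s)$ becomes the constant skew-symmetric integer matrix $A$ with $A_{ij} = \hbox{sgn}(j-i)$, and one verifies inductively that $(1,\ldots,1)$ is cyclic for $A$ by using the involution $i \mapsto n+1-i$ to split $\C^n$ into its symmetric and antisymmetric parts, reducing the Krylov problem to two smaller cyclicity statements. A slicker alternative is to establish the closed form $\det M = (-1)^{n(n-1)/2}\prod_{i<j}(x_i+x_j)$: observing that the specialization $x_i = -x_j$ degenerates the Krylov orbit of $\phi_0$ into a proper $T$-invariant subspace shows that each linear factor $x_i + x_j$ divides $\det M$, and a degree count ($\det M$ is homogeneous of degree $\binom{n}{2}$ in the $x$'s) identifies the remaining constant, pinned down by evaluation at $s = (1,\ldots,1)$. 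Either way $\det M \not\equiv 0$, establishing the lemma.
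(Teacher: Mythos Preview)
Your setup matches the paper's exactly: specialize to some $s\in\mathbb R_{>0}^n$ where $T_s$ has distinct eigenvalues (Corollary~\ref{roots}), use that $T_s$ is skew with respect to the positive-definite form $B_s$, and reduce cyclicity of $(\phi_0)_s$ to the non-vanishing of $B_s((\phi_0)_s,v)$ for every eigenvector $v$ of $T_s$.

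The gap is at precisely this last step. You write that ``the main obstacle is certifying this non-vanishing at a concrete $s$'' and then abandon the eigenvector framework in favor of two alternatives, neither of which you actually carry out: the inductive splitting at $s=(1,\dots,1)$ is only asserted, and for the determinant formula $\det M=\pm\prod_{i<j}(x_i+x_j)$ you do not prove the key divisibility (your one-line claim that $x_i=-x_j$ forces the Krylov orbit into a proper invariant subspace needs a real argument). So as written the proof is incomplete.

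The paper instead finishes the eigenvector argument directly, and this is worth knowing because it is short. Suppose $v=\sum t_j\xi_j$ is an eigenvector of $T_s$ with eigenvalue $a$ (purely imaginary) and $B_s((\phi_0)_s,v)=0$, i.e.\ $\sum_j s_jt_j=0$. From the explicit form (\ref{equmatrix}) of $T$, the first coordinate of $T_sv$ is $\sum_{j>1}s_jt_j=-s_1t_1$; but this must equal $at_1$. Since $s_1>0$ is real and $a$ is purely imaginary, $t_1=0$. Now $\sum_{j\geq 2}s_jt_j=0$, and the second coordinate of $T_sv$ is $\sum_{j>2}s_jt_j=-s_2t_2=at_2$, forcing $t_2=0$; inductively all $t_j=0$, contradicting $v\neq 0$. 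This works for \emph{any} $s\in\mathbb R_{>0}^n$ with distinct eigenvalues, with no further computation. Your determinant formula, if established, would give more information (an explicit product), but you would still need to supply the divisibility argument and the evaluation of the leading constant to make it a proof.
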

\begin{proof} For each $s=(s_1,\dots,s_n)\in\mathbb C^n$ consider the ideal $I_s=(x_1-s_1,\dots,x_n-s_n)\in U(\h_{\bar 0})$.
Let $V_s=V/I_sV$ and $T_s$, $B_s$ and $(\phi_i)_s$ denote the corresponding operator, form and vector in $V_s$.
It suffices to show that  $(\phi_0)_s,\dots(\phi_{n-1})_s$ are linearly independent for some $s$. By Corollary \ref{roots}
we can find $s\in\mathbb R_{>0}^n$ such that all eigenvalues of $T_s$ are distinct.
Let $v_1, \ldots, v_n$ denote an eigenbasis for $T_s$, and
let $H_s$ denote the Hermitian form such that
$H_s(\xi_i,\xi_j)=B_s(\xi_i,\xi_j)$ for all $i,j=1,\dots,n$. Then $H_s$ is positive definite and
$T_s$ is skew-hermitian with respect to $H_s$.
Hence all eigenvalues
of $T_s$ are purely imaginary and $H_s(v_i,v_j)=0$ if $i\neq j$. Let $(\phi_0)_s=\sum_{i=1}^n a_iv_i.$
Since all eigenvalues of $H_s$ are distinct and $(\phi_l)_s=T^l(\phi_0)_s$, linear independence of
$(\phi_0)_s,\dots(\phi_{n-1})_s$ is equivalent to the fact that $a_i$
are not zero for all $i=1,\dots n$. Assume that some $a_i=0$. Since
$a_i=\frac{H_s(v_i,(\phi_0)_s)}{H_s(v_i,v_i)}$, that implies $H_s(v_i,(\phi_0)_s)=0$. Let $v_i=t_1\xi_1+\dots+t_n\xi_n$. Then the last condition
implies $\sum_{i=1}^n s_it_i=0$. But then the first coordinate of
$T_sv_i$ equals $s_2t_2+\dots+s_nt_n=-s_1t_1$. Since $T_sv_i=a v_i$ for some purely imaginary $a$, we obtain $t_1=0$. Repeating this argument
we can prove by induction that all $t_i$ are zero and obtain a contradiction.
\end{proof}

\noindent
{\bf Problem.} Calculate  $\vartheta(\Phi_i)$ and $\vartheta (z_i)$.

\begin{lemma}\label{center} The centralizer of $\mathcal A$ in $U(\h)$ coincides with $U(\h_{\bar 0})$.
\end{lemma}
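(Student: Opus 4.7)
The plan is to pass to the field of fractions $F:=\operatorname{Frac}(U(\h_{\bar 0}))$ and reduce the statement to a direct super-centralizer computation in a Clifford algebra. Recall that the even generators $x_1,\dots,x_n$ are central in $U(\h)$, while $\xi_i^2=x_i$ and $\xi_i\xi_j=-\xi_j\xi_i$ for $i\neq j$; hence $U(\h)$ is a free $U(\h_{\bar 0})$-module with basis $\{\xi_I:I\subseteq\{1,\dots,n\}\}$, where $\xi_I=\xi_{i_1}\cdots\xi_{i_k}$ for $I=\{i_1<\cdots<i_k\}$. Setting $U(\h)_F:=F\otimes_{U(\h_{\bar 0})}U(\h)$, this becomes the Clifford algebra of the nondegenerate symmetric form on $V_F:=\bigoplus_i F\xi_i$ with matrix $\operatorname{diag}(2x_1,\dots,2x_n)$. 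The inclusion $U(\h)\hookrightarrow U(\h)_F$ is injective and intertwines super-brackets, so it suffices to compute the super-centralizer of $\mathcal A$ in $U(\h)_F$ and then descend.

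Next I would invoke Lemma \ref{independence}: the $n$ elements $\phi_0,\dots,\phi_{n-1}\in\mathcal A$ are $U(\h_{\bar 0})$-linearly independent, hence by dimension count they form an $F$-basis of $V_F$. Every $\xi_i$ is therefore an $F$-linear combination of the $\phi_k$. Because $F$ is super-central in $U(\h)_F$, any element of $U(\h)_F$ super-commuting with all $\phi_k$ must automatically super-commute with every $\xi_i$, and hence with all of $V_F$.

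It remains to check that the super-centralizer of $V_F$ in the Clifford algebra $U(\h)_F$ equals $F\cdot 1$ and then descend. Writing $u=\sum_I u_I\xi_I$ with $u_I\in F$, a short sign computation gives $[\xi_i,\xi_I]=0$ for $i\notin I$ and $[\xi_i,\xi_I]=\pm 2x_i\xi_{I\setminus\{i\}}$ for $i\in I$. Imposing $[u,\xi_i]=0$ for every $i$ and using the $F$-linear independence of the $\xi_J$ forces $u_I\,x_i=0$ whenever $i\in I$; invertibility of $x_i$ in $F$ then yields $u_I=0$ for all non-empty $I$, so $u\in F$. The descent is immediate: an element of $U(\h)$ whose image in $U(\h)_F$ lies in $F\cdot 1$ must, by the decomposition $U(\h)=\bigoplus_I U(\h_{\bar 0})\xi_I$, have all components with $I\neq\emptyset$ equal to zero, and therefore lie in $U(\h_{\bar 0})$. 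The only genuinely non-trivial input is the $F$-basis property of Lemma \ref{independence}; the Clifford-algebra bookkeeping and the descent to $U(\h)$ are both routine.
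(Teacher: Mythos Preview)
Your argument is correct and follows essentially the same route as the paper: pass to $U(\h)_F$, use Lemma~\ref{independence} to conclude that the $\xi_i$ lie in the $F$-span of the $\phi_k\in\mathcal A$, and then deduce that any element super-commuting with $\mathcal A$ must super-commute with all $\xi_i$ and hence lie in the center $U(\h_{\bar 0})$. You simply spell out in detail the Clifford-algebra centralizer computation and the descent step that the paper handles in a single line.
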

\begin{proof} Suppose that $u$ lies in the centralizer of $\mathcal A$. Recall that  $F$ denotes the field of fractions of $U(\h_{\bar 0})$. Then
since $U(\h)$ is a free $U(\h_{\bar 0})$-module, $U(\h)\subset U(\h)_F$. By Lemma \ref{independence}, $\mathcal A_F$ contains $\xi_1,\dots,\xi_n$.
Hence we have $[\xi_i,u]=0$ for all $i=1,\dots,n$. Therefore $u$ lies in the center of $U(\h)$, which coincides with $U(\h_{\bar 0})$.
\end{proof}

\begin{corollary}\label{cent1}  The center of $\mathcal A$ coincides with $\mathcal A^0$.
\end{corollary}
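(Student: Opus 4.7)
The plan is to deduce the corollary directly from Lemma \ref{center}, since both inclusions are essentially immediate.

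For the inclusion $\mathcal A^0 \subseteq Z(\mathcal A)$, I would simply note that $\mathcal A^0 = \mathcal A \cap U(\h_{\bar 0})$ and that $U(\h_{\bar 0})$ is central in $U(\h)$ (as pointed out in the proof of Proposition \ref{AL}, the even Cartan $\h_{\bar 0}$ is abelian and its universal enveloping coincides with the center of $U(\h)$). Since $\mathcal A \subseteq U(\h)$, any element of $\mathcal A^0$ commutes with every element of $\mathcal A$, so it lies in $Z(\mathcal A)$.

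For the reverse inclusion, let $u \in Z(\mathcal A)$. Then $u$ lies in the centralizer of $\mathcal A$ inside $U(\h)$. By Lemma \ref{center}, this centralizer equals $U(\h_{\bar 0})$, so $u \in U(\h_{\bar 0})$. Combined with $u \in \mathcal A$, this gives $u \in \mathcal A \cap U(\h_{\bar 0}) = \mathcal A^0$.

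There is no real obstacle here: all the work was already done in establishing Lemma \ref{independence} (which supplies the linear independence of $\phi_0,\dots,\phi_{n-1}$ over $U(\h_{\bar 0})$, used to force any $\mathcal A$-centralizer to annihilate each $\xi_i$) and Lemma \ref{center}. The corollary is just the combination of these two facts with the trivial observation that $U(\h_{\bar 0})$ is central in $U(\h)$.
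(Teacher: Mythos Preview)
Your proposal is correct and matches the paper's approach exactly: the paper states Corollary~\ref{cent1} without proof, treating it as an immediate consequence of Lemma~\ref{center}, which is precisely what you have spelled out (together with the observation that $U(\h_{\bar 0})$ is the center of $U(\h)$).
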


\noindent
Proposition \ref{HCinjective}, Lemma \ref{a0} and Corollary \ref{cent1} imply.
\begin{corollary}\label{cent2} The center of $W_{\chi}$ coincides with $\pi(Z(Q(n)))$.
\end{corollary}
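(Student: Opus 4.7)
The plan is to assemble the three already-established ingredients: injectivity of the Harish-Chandra map $\vartheta$ (Proposition~\ref{HCinjective}), the identification of even invariants $\mathcal{A}^0 = \vartheta(\pi(Z(\g)))$ (Lemma~\ref{a0}), and the fact that the center of $\mathcal{A}$ coincides with $\mathcal{A}^0$ (Corollary~\ref{cent1}). Once these are in place, the corollary follows essentially by a diagram chase.

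First I would establish the easy inclusion $\pi(Z(Q(n))) \subseteq Z(W_\chi)$. This is immediate: any $z \in Z(Q(n))$ commutes with every element of $U(\g)$, so in particular $\pi(z) \in W_\chi$ and it commutes with $\pi(y)$ for every $y \in U(\g)$ such that $\operatorname{ad}(a)y \in I_\chi$ for all $a \in \m$. Hence $\pi(z)$ is central in $W_\chi$.

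For the reverse inclusion $Z(W_\chi) \subseteq \pi(Z(Q(n)))$, I would take any $u \in Z(W_\chi)$ and transport the problem to $\mathcal{A}$ via $\vartheta$. Since $\vartheta: W_\chi \to \bar W_\chi^\s = U(\h)$ is an injective algebra homomorphism with image $\mathcal{A}$, the element $\vartheta(u)$ lies in the center of $\mathcal{A}$. By Corollary~\ref{cent1}, the center of $\mathcal{A}$ equals $\mathcal{A}^0 = \mathcal{A} \cap U(\h_{\bar 0})$, so $\vartheta(u) \in \mathcal{A}^0$. By Lemma~\ref{a0}, $\mathcal{A}^0 = \vartheta(\pi(Z(\g)))$, so there exists $z \in Z(\g)$ with $\vartheta(u) = \vartheta(\pi(z))$. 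Injectivity of $\vartheta$ then forces $u = \pi(z) \in \pi(Z(Q(n)))$, completing the proof.

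No step should be a real obstacle here, since all the hard work has been done: the heavy lifting in Lemma~\ref{a0} (which uses Sergeev's description of $Z(Q(n))$ via symmetric polynomials satisfying the supercenter condition~(\ref{supercent}), together with a reduction to $Q(2)$) and in Lemma~\ref{center}/Corollary~\ref{cent1} (which uses Lemma~\ref{independence} to exhibit $\xi_1, \ldots, \xi_n$ inside $\mathcal{A}_F$ and thereby pin down the centralizer of $\mathcal{A}$). The only thing to verify carefully is that the identification via $\vartheta$ genuinely preserves the notion of ``center,'' which is automatic because $\vartheta$ is an injective homomorphism of associative superalgebras onto $\mathcal{A}$.
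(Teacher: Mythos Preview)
Your proof is correct and follows exactly the approach the paper indicates: the paper simply states that Proposition~\ref{HCinjective}, Lemma~\ref{a0} and Corollary~\ref{cent1} imply the result, and your argument spells out precisely that diagram chase. Your separate verification of the easy inclusion $\pi(Z(Q(n)))\subseteq Z(W_\chi)$ is fine but not strictly needed, since the isomorphism $\vartheta:W_\chi\to\mathcal A$ already gives both inclusions at once via $Z(W_\chi)=\vartheta^{-1}(Z(\mathcal A))=\vartheta^{-1}(\mathcal A^0)=\vartheta^{-1}(\vartheta(\pi(Z(\g))))=\pi(Z(\g))$.
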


\subsection{New generators and relations}
We will need the following realization of $Q(n)$
given by
M. Nazarov and S. Sergeev in \cite{NS}.
Let the indices $i, j$ run through $-n, \ldots, -1, 1, \ldots, n$.
Put $p(i) = 0$ if $i>0$ and $p(i) = 1$ if $i < 0$.
As a vector space $Q(n)$ is spanned by the elements
$$F_{ij} = E_{ij} +  E_{-i,-j}.$$
Note that $F_{-i,-j} = F_{ij}$.
The elements $F_{ij}$ with $i>0$ form a basis of $Q(n)$.

 For any indices $n\geq 1$ and $i, j = \pm 1, \ldots, \pm n$,
we denote by $F_{ij}^{(m)}$ the following element of $U(Q(n))$:
\begin{equation}\label{equG3}
F_{ij}^{(m)} = \sum_{k_1, \ldots, k_{m-1}}(-1)^{p(k_1) + \ldots + p(k_{m-1})}
F_{ik_1}F_{k_1k_2}\ldots F_{k_{m-2}k_{m-1}}F_{k_{m-1}j}.
\end{equation}
Note that
\begin{equation}\label{equG4}
F_{ij}^{(m)} = (-1)^{m-1}F_{-i,-j}^{(m)},
\end{equation}
\begin{equation}\label{equG5}
F_{ij}^{(m)} = e_{ij}^{(m)}, \quad\hbox{ for }i, j > 0,
\end{equation}
\begin{equation}\label{equG6}
F_{ij}^{(m)} = (-1)^{m+1}f_{-i,j}^{(m)}, \quad\hbox{ for }i <0, j > 0.
\end{equation}

\begin{proposition}\label{evencommute}
\noindent
For odd $k$ and $m$ we have
\begin{equation}\label{equA10}
[\pi(e_{n,1}^{(n+k)}), \pi(e_{n,1}^{(n+m)})] = 0.
\end{equation}
\end{proposition}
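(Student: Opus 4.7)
The plan is to apply the injectivity of the Harish--Chandra homomorphism $\vartheta:W_\chi \hookrightarrow U(\h)$ (Theorem \ref{HCinjective}) and reduce the commutativity to a symbolic identity in $U(\h)$. Set $E_l:=\vartheta(\pi(e_{n,1}^{(n+l)}))$ and $F_l:=\vartheta(\pi(f_{n,1}^{(n+l)}))$. Proposition \ref{HCgenerators} gives $E_l+F_l=T_{l+1}$, where
$$T_{l+1} = \sum_{n\ge i_1\ge\cdots\ge i_{l+1}\ge 1}\prod_{j=1}^{l+1}\bigl(x_{i_j}+(-1)^{l+1-j}\xi_{i_j}\bigr),$$
and $E_l$, $F_l$ are the even and odd parts of $T_{l+1}$ under the parity on $U(\h)$ coming from the number of $\xi$-factors; recall $\xi_i^2=x_i$, $\xi_i\xi_j=-\xi_j\xi_i$ for $i\ne j$, and the $x_i$ are central. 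We want to prove $[E_k,E_m]=0$ for odd $k,m$.

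The first step is to establish the stronger identity $T_{k+1}T_{m+1}=T_{m+1}T_{k+1}$ in $U(\h)$ whenever both $k+1$ and $m+1$ are even. I would expand both products as double sums over pairs of weakly-decreasing tuples, bring them to a common normal form using the Clifford-like relations, and match them term-by-term via a sign-preserving involution on the combined index set. The feature that makes this possible specifically when $k+1,m+1$ are both even is the alternating pattern $(x-\xi)(x+\xi)\cdots(x-\xi)(x+\xi)$ of the factors: the boundary between the two products of $T_{k+1}$ and $T_{m+1}$ matches that of $T_{m+1}$ and $T_{k+1}$, so the normal orderings agree up to the involution.

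Once the full products commute, splitting by parity yields $[E_k,E_m]+(F_kF_m-F_mF_k)=0$ from the even-parity component. To conclude $[E_k,E_m]=0$ it remains to show $F_kF_m=F_mF_k$ for $k,m$ odd. This can be done by a parallel argument applied to the anticommutator $T_{k+1}T_{m+1}+T_{m+1}T_{k+1}$ (extracting its odd component), or alternatively by invoking Lemma \ref{specialgen}(b) combined with a Kazhdan-filtration induction: the odd elements $\Phi_j$ share their leading Kazhdan terms with $F_j$, and the relation $[\Phi_p,\Phi_q]=0$ for $p+q$ odd propagates to $[F_k,F_m]=0$ through the filtration.

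The main obstacle is the combinatorial bookkeeping for the normal-ordering step: the Clifford relations produce many sign-bearing cross-terms when two weakly-decreasing sums of noncommuting factors are interleaved, and organizing the cancellations requires careful attention to signs and the index constraints. If the direct combinatorial approach becomes unwieldy, a conceptual alternative is to pass through the super-Yangian $Y(Q(1))$ via the identification of $W_\chi$ with its quotient (Theorem \ref{Yan}, stated later in the paper): under that identification, $\pi(e_{n,1}^{(n+k)})$ for odd $k$ should correspond to a commuting family of coefficients of the quantum super-determinant of $Y(Q(1))$ constructed by Nazarov and Sergeev, yielding $[E_k,E_m]=0$ at once.
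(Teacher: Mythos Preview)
Your approach is genuinely different from the paper's, which never passes to $U(\h)$ but works directly in $W_\chi$ via the explicit Nazarov--Sergeev commutator relations for $e_{n,1}^{(m)}$ and $f_{n,1}^{(m)}$ in $U(\g)$, organized as an induction on $k+m$.

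However, there is a real gap. Even granting your first step (that $T_{k+1}$ and $T_{m+1}$ commute in $U(\h)$, which you only sketch), the passage to $[E_k,E_m]=0$ requires the vanishing of the \emph{associative} commutator $F_kF_m-F_mF_k$. Your appeal to Lemma \ref{specialgen}(b) misfires on two counts. First, that lemma concerns the case $p+q$ odd, whereas here $k$ and $m$ are both odd so $k+m$ is even; for even $k+m$ Lemma \ref{specialgen}(c) says the bracket is a generically nonzero central element. Second, the bracket $[\Phi_p,\Phi_q]$ is the supercommutator, i.e.\ the anticommutator for odd elements, while what you need is the vanishing of the ordinary commutator $F_kF_m-F_mF_k$; these are unrelated conditions. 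Your alternative, extracting the odd component of the anticommutator $T_{k+1}T_{m+1}+T_{m+1}T_{k+1}$, yields $E_kF_m+F_kE_m+E_mF_k+F_mE_k$, which again says nothing about $F_kF_m-F_mF_k$.

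The Yangian shortcut is also incomplete: Theorem \ref{Yan} provides only a surjection $Y(Q(1))\to W_\chi$, so commutativity in $W_\chi$ would require commutativity of the preimages $T_{1,1}^{(k+1)},T_{1,1}^{(m+1)}$ in $Y(Q(1))$, a fact about the Yangian that the paper does not supply and that your reference to a ``quantum super-determinant'' does not establish. The paper instead uses formulas (\ref{equA11}) and (\ref{equA12}) to express $[\pi(e_{n,1}^{(n+k)}),\pi(e_{n,1}^{(n+m)})]$ as a telescoping combination of brackets and products of lower total degree, proves a supporting lemma $[\pi(f_{n,1}^{(n)}),\pi(f_{n,1}^{(n+m)})]=0$ for odd $m$, and then checks by induction on $k+m$ that the pieces cancel.
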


\begin{proof}
We prove the statement by induction on $l = k+m$. Obviously, if $l = 2$, then (\ref{equA10}) is true.
Assume that the statement is true for odd $k$ and $m$ such that $k+m\leq l-2$.
According to \cite{NS}
$$[F_{n,1}^{(m)}, F_{n,1}^{(k)}] = \sum_{r = 1}^{m-1}
[F_{n,1}^{(k+r-1)}, F_{n,1}^{(m-r)}] +
\sum_{r = 1}^{m-1}(-1)^r(F_{-n,1}^{(k+r-1)}F_{-n,1}^{(m-r)} -
F_{n,-1}^{(m-r)}F_{n,-1}^{(k+r-1)}).$$
Thus from (\ref{equG4}), (\ref{equG5}), (\ref{equG6}) we have
\begin{equation}\label{equA11}
[e_{n,1}^{(m)}, e_{n,1}^{(k)}] =
\sum_{r = 1}^{m-1}
[e_{n,1}^{(k+r-1)}, e_{n,1}^{(m-r)}] +
\sum_{r = 1}^{m-1}(-1)^{r+1}((-1)^{k+m}f_{n,1}^{(k+r-1)}f_{n,1}^{(m-r)} + f_{n,1}^{(m-r)}f_{n,1}^{(k+r-1)}).
\end{equation}
Furthermore, from \cite{NS}
\begin{align}
&[F_{-n,1}^{(m)}, F_{-n,1}^{(k)}] = \sum_{r = 1}^{m-1}
(F_{-n,1}^{(k+r-1)}F_{-n,1}^{(m-r)} - F_{-n,1}^{(m-r)}F_{-n,1}^{(k+r-1)}) \nonumber\\
&+ \sum_{r = 1}^{m-1}(-1)^{r+1}(F_{n,1}^{(k+r-1)}F_{n,1}^{(m-r)} -
F_{-n,-1}^{(m-r)}F_{-n,-1}^{(k+r-1)}).
\nonumber
\end{align}
Thus from (\ref{equG4}), (\ref{equG5}), (\ref{equG6}) we have
\begin{align}\label{equA12}
&[f_{n,1}^{(m)}, f_{n,1}^{(k)}] =
-\sum_{r = 1}^{m-1}
(f_{n,1}^{(k+r-1)}f_{n,1}^{(m-r)} - f_{n,1}^{(m-r)}f_{n,1}^{(k+r-1)})\\
&+\sum_{r = 1}^{m-1}(-1)^{r+1}((-1)^{k+m}e_{n,1}^{(k+r-1)}e_{n,1}^{(m-r)} + e_{n,1}^{(m-r)}e_{n,1}^{(k+r-1)}).
\nonumber
\end{align}
\begin{lemma}\label{claim}
For odd $m$ we have
$$[\pi(f_{n,1}^{(n)}), \pi(f_{n,1}^{(n+m)})] = 0.$$
\end{lemma}
\begin{proof}
From (\ref{equA12})
\begin{align}
&[\pi(f_{n,1}^{(n)}), \pi(f_{n,1}^{(n+m)})] =
-\sum_{r = 1}^{m-1}
\Big(\pi(f_{n,1}^{(n+m+r-1)})\pi(f_{n,1}^{(n-r)}) - \pi(f_{n,1}^{(n-r)})\pi(f_{n,1}^{(n+m+r-1)})\Big)\nonumber\\
&+\sum_{r = 1}^{m-1}(-1)^{r+1}\Big(-\pi(e_{n,1}^{(n+m+r-1)})\pi(e_{n,1}^{(n-r)}) + \pi(e_{n,1}^{(n-r)})\pi(e_{n,1}^{(n+m+r-1)})\Big).
\nonumber
\end{align}
Note that the first sum is zero, since $\pi(f_{n,1}^{(n-r)}) = 0$ for $r\geq 1$ by (\ref{equG7}),
and the second sum is also zero, since $\pi(e_{n,1}^{(n-1)}) = 1$ and
$\pi(e_{n,1}^{(n-r)}) = 0$ for $r\geq 2$ by (\ref{equG7}).
\end{proof}

\noindent
Let
\begin{equation}\label{equA13}
[e_{n,1}^{(n+k)}, e_{n,1}^{(n+m)}]^e =
[e_{n,1}^{(n+m)}, e_{n,1}^{(n+k-1)}] + [e_{n,1}^{(n+m+1)}, e_{n,1}^{(n+k-2)}] + \ldots
+ [e_{n,1}^{(n+m+k-2)}, e_{n,1}^{(n+1)}],
\end{equation}
\begin{equation}\label{equA14}
[e_{n,1}^{(n+k)}, e_{n,1}^{(n+m)}]^f =
[f_{n,1}^{(n+m)}, f_{n,1}^{(n+k-1)}] - [f_{n,1}^{(n+m+1)}, f_{n,1}^{(n+k-2)}] + \ldots
- [f_{n,1}^{(n+m+k-2)}, f_{n,1}^{(n+1)}].
\end{equation}
Then
$$[\pi(e_{n,1}^{(n+k)}), \pi(e_{n,1}^{(n+m)})] =
\pi([e_{n,1}^{(n+k)}, e_{n,1}^{(n+m)}]^e) + \pi([e_{n,1}^{(n+k)}, e_{n,1}^{(n+m)}]^f),$$
since by (\ref{equG7})
$$\pi(e_{n,1}^{(n)}) = \pi(z), \quad \pi(e_{n,1}^{(n-1)}) = 1, \quad
\pi(e_{n,1}^{(n-r)}) = 0 \hbox{ for } r\geq 2,\quad
\pi(f_{n,1}^{(n-r)}) = 0 \hbox{ for } r\geq 1,$$
and by Lemma \ref{claim}
$$[\pi(f_{n,1}^{(n+m+k-1)}), \pi(f_{n,1}^{(n)})] = 0,$$
since $m+k-1$ is odd.

\noindent
Note that each of the sums in (\ref{equA13}) and (\ref{equA14}) has $k-1$ terms, where $k-1$ is even.

Denote by $[e_{n,1}^{(m)}, e_{n,1}^{(k)}]_e$  and by $[e_{n,1}^{(m)}, e_{n,1}^{(k)}]_f$
 the first and the second sum in (\ref{equA11}), respectively. Thus
 $$[e_{n,1}^{(m)}, e_{n,1}^{(k)}] =
 [e_{n,1}^{(m)}, e_{n,1}^{(k)}]_e + [e_{n,1}^{(m)}, e_{n,1}^{(k)}]_f.$$
Also, denote by $[f_{n,1}^{(m)}, f_{n,1}^{(k)}]_f$  and by $[f_{n,1}^{(m)}, f_{n,1}^{(k)}]_e$
 the first and the second sum in (\ref{equA12}), respectively. Thus
 $$[f_{n,1}^{(m)}, f_{n,1}^{(k)}] =
 [f_{n,1}^{(m)}, f_{n,1}^{(k)}]_f + [f_{n,1}^{(m)}, f_{n,1}^{(k)}]_e.$$
Let
$$A^m = \pi([e_{n,1}^{(n+m)}, e_{n,1}^{(n+k-1)}] + [e_{n,1}^{(n+m+1)}, e_{n,1}^{(n+k-2)}]
+[f_{n,1}^{(n+m)}, f_{n,1}^{(n+k-1)}] - [f_{n,1}^{(n+m+1)}, f_{n,1}^{(n+k-2)}]).$$
We claim that $A^m = 0$. Note that $A^m = A^m_e + A^m_f$, where
$$A^m_e = \pi([e_{n,1}^{(n+m)}, e_{n,1}^{(n+k-1)}]_e + [e_{n,1}^{(n+m+1)}, e_{n,1}^{(n+k-2)}]_e +
[f_{n,1}^{(n+m)}, f_{n,1}^{(n+k-1)}]_e -[f_{n,1}^{(n+m+1)}, f_{n,1}^{(n+k-2)}]_e),$$
$$A^m_f = \pi([e_{n,1}^{(n+m)}, e_{n,1}^{(n+k-1)}]_f + [e_{n,1}^{(n+m+1)}, e_{n,1}^{(n+k-2)}]_f +
[f_{n,1}^{(n+m)}, f_{n,1}^{(n+k-1)}]_f -[f_{n,1}^{(n+m+1)}, f_{n,1}^{(n+k-2)}]_f).$$
Let us show that $A^m_e = 0$. Note that
\begin{align}
&[e_{n,1}^{(n+m)}, e_{n,1}^{(n+k-1)}]_e =
[e_{n,1}^{(n+k-1)}, e_{n,1}^{(n+m-1)}] +
[e_{n,1}^{(n+k)}, e_{n,1}^{(n+m-2)}]+\nonumber\\
&[e_{n,1}^{(n+k+1)}, e_{n,1}^{(n+m-3)}] +
[e_{n,1}^{(n+k+2)}, e_{n,1}^{(n+m-4)}] +\ldots,\nonumber\\
&[f_{n,1}^{(n+m)}, f_{n,1}^{(n+k-1)}]_e =
-[e_{n,1}^{(n+k-1)},e_{n,1}^{(n+m-1)}] + [e_{n,1}^{(n+k)},e_{n,1}^{(n+m-2)}]\nonumber\\
&-[e_{n,1}^{(n+k+1)},e_{n,1}^{(n+m-3)}]
+ [e_{n,1}^{(n+k+2)},e_{n,1}^{(n+m-4)}] +\ldots.
\nonumber
\end{align}
Thus
$$[e_{n,1}^{(n+m)}, e_{n,1}^{(n+k-1)}]_e + [f_{n,1}^{(n+m)}, f_{n,1}^{(n+k-1)}]_e  =
2[e_{n,1}^{(n+k)},e_{n,1}^{(n+m-2)}] + 2[e_{n,1}^{(n+k+2)},e_{n,1}^{(n+m-4)}] +\ldots.$$
By induction hypothesis
$$[\pi(e_{n,1}^{(n+k)}),\pi(e_{n,1}^{(n+m-2)})] = [\pi(e_{n,1}^{(n+k+2)}),\pi(e_{n,1}^{(n+m-4)})] = \ldots = 0$$
for positive $k, m-2, m-4, \ldots$
Note that (\ref{equA10}) also holds for odd $k, m$ such that
$k\leq -1$ or $m\leq -1$ by (\ref{equG7}). Similarly
$$[e_{n,1}^{(n+m+1)}, e_{n,1}^{(n+k-2)}]_e - [f_{n,1}^{(n+m+1)}, f_{n,1}^{(n+k-2)}]_e =
2[e_{n,1}^{(n+k-2)},e_{n,1}^{(n+m)}] +2[e_{n,1}^{(n+k)}, e_{n,1}^{(n+m-2)}] +\ldots = 0.$$
By induction hypothesis
$$[\pi(e_{n,1}^{(n+k-2)}),\pi(e_{n,1}^{(n+m)})] =
[\pi(e_{n,1}^{(n+k)}), \pi(e_{n,1}^{(n+m-2)})]  = \ldots = 0.$$
Hence $A^m_e = 0$.
Let us show that $A^m_f = 0$. Note that
\begin{align}
&[e_{n,1}^{(n+m)}, e_{n,1}^{(n+k-1)}]_f = - [e_{n,1}^{(n+k-1)}, e_{n,1}^{(n+m)}]_f = \nonumber\\
&(f_{n,1}^{(n+m)}f_{n,1}^{(n+k-2)} - f_{n,1}^{(n+k-2)}f_{n,1}^{(n+m)})
-(f_{n,1}^{(n+m+1)}f_{n,1}^{(n+k-3)} - f_{n,1}^{(n+k-3)}f_{n,1}^{(n+m+1)})+\ldots, \nonumber\\
&[e_{n,1}^{(n+m+1)}, e_{n,1}^{(n+k-2)}]_f = - [e_{n,1}^{(n+k-2)}, e_{n,1}^{(n+m+1)}]_f = \nonumber\\
&(f_{n,1}^{(n+m+1)}f_{n,1}^{(n+k-3)} - f_{n,1}^{(n+k-3)}f_{n,1}^{(n+m+1)})
-(f_{n,1}^{(n+m+2)}f_{n,1}^{(n+k-4)} - f_{n,1}^{(n+k-4)}f_{n,1}^{(n+m+2)})+\ldots, \nonumber\\
&[f_{n,1}^{(n+m)}, f_{n,1}^{(n+k-1)}]_f =  [f_{n,1}^{(n+k-1)}, f_{n,1}^{(n+m)}]_f = \nonumber\\
&-(f_{n,1}^{(n+m)}f_{n,1}^{(n+k-2)} - f_{n,1}^{(n+k-2)}f_{n,1}^{(n+m)})
-(f_{n,1}^{(n+m+1)}f_{n,1}^{(n+k-3)} - f_{n,1}^{(n+k-3)}f_{n,1}^{(n+m+1)}) - \ldots, \nonumber\\
&-[f_{n,1}^{(n+m+1)}, f_{n,1}^{(n+k-2)}]_f =  -[f_{n,1}^{(n+k-2)}, f_{n,1}^{(n+m+1)}]_f = \nonumber\\
&(f_{n,1}^{(n+m+1)}f_{n,1}^{(n+k-3)} - f_{n,1}^{(n+k-3)}f_{n,1}^{(n+m+1)})
+(f_{n,1}^{(n+m+2)}f_{n,1}^{(n+k-4)} - f_{n,1}^{(n+k-4)}f_{n,1}^{(n+m+2)}) + \ldots.
\nonumber
\end{align}

\noindent
In the sum of the right-hand sides of these equations all terms cancel out.
Hence
$A^m_f = 0$.
Then
$A^m = 0.$
Similarly,
$$A^{m+2} = A^{m+4} = \ldots = A^{m+k-3} = 0.$$
Then
$$[\pi(e_{n,1}^{(n+k)}), \pi(e_{n,1}^{(n+m)})] = \sum_{i = 0}^{{1\over 2}{(k-3)}}A^{m+2i} = 0.$$

\end{proof}

\noindent
We set
$$z_i = \pi(e_{n,1}^{(n+i)}) \quad\hbox{ for odd }i, \quad 1\leq i\leq n-1.$$

\begin{theorem}\label{Gen}
Elements $z_0,\dots,z_{n-1}$ are algebraically independent
in  $W_\chi$. Together with $\Phi_0,\dots,\Phi_{n-1}$ they form a complete
set of generators in $W_\chi$.
\end{theorem}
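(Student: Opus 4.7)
The plan is to combine Proposition \ref{towpr}(a) (generation of $W_\chi$ from any lift of a basis of $\g^\chi$) with a standard leading-term argument in the Kazhdan filtration, using Corollary \ref{cor}.

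The first step is to compute the symbols $P(z_i)$ for $i = 0,\ldots,n-1$. For odd $i$ one has $z_i = \pi(e_{n,1}^{(n+i)})$, so Lemma \ref{claim2} gives $P(z_i) = e^i$ at once. For even $i \geq 2$, $z_i = [\Phi_0,\Phi_i]$, and I would invoke the principle already used in the proof of Lemma \ref{specialgen}(a) (that $P([X,Y]) = [P(X),P(Y)]$ whenever the bracket of symbols is nonzero in $\g^\chi$) to reduce to the computation of $[H_0,H_i]$ inside $\g$. A short calculation with $[f_{a,b},f_{c,d}] = \delta_{b,c}e_{a,d} + \delta_{d,a}e_{c,b}$ yields $[H_0,H_i] = (1+(-1)^i)e^i = 2e^i$. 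For $i = 0$ we have $z_0 = [\Phi_0,\Phi_0] = 2\Phi_0^2 = 2\pi(H_0^2)$; the cross terms $f_{i,i}f_{j,j}$ with $i\neq j$ cancel by anticommutativity while $f_{i,i}^2 = e_{i,i}$, so $H_0^2 = z$ and $P(z_0) = 2z$.

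Combined with $P(\Phi_k) = H_k$ from Lemma \ref{specialgen}(a), the $2n$ symbols $P(z_0),\ldots,P(z_{n-1}),P(\Phi_0),\ldots,P(\Phi_{n-1})$ are nonzero scalar multiples of the basis $\{z,e,e^2,\ldots,e^{n-1},H_0,\ldots,H_{n-1}\}$ of $\g^\chi$ recorded in (\ref{equ1}). Proposition \ref{towpr}(a) then immediately yields the second assertion: $z_0,\ldots,z_{n-1},\Phi_0,\ldots,\Phi_{n-1}$ generate $W_\chi$.

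For algebraic independence of $z_0,\ldots,z_{n-1}$, assign Kazhdan degree $2i+2$ (the degree of $z_i$) to the formal variable $T_i$, and suppose a nonzero polynomial $p$ satisfies $p(z_0,\ldots,z_{n-1}) = 0$ in $W_\chi$. Decomposing $p$ into weighted-homogeneous parts with top part $p_D$, one obtains $p_D(\hbox{Gr}_K z_0,\ldots,\hbox{Gr}_K z_{n-1}) = 0$ in $\hbox{Gr}_K W_\chi \simeq S(\g^\chi)$. Taking the top Dynkin-weight component produces $p_D(P(z_0),\ldots,P(z_{n-1})) = 0$ inside the polynomial ring $\C[z,e,e^2,\ldots,e^{n-1}]$, the even part of $S(\g^\chi)$. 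Since the $P(z_i)$'s are nonzero scalar multiples of the independent polynomial generators of this ring, $p_D = 0$, and iteration on weighted degree forces $p = 0$. The main technical check is verifying that $[H_0,H_i]$ is nonzero in $\g^\chi$ for even $i \geq 2$, so that the ``$P$ of a commutator equals commutator of $P$'s'' principle applies; once the symbols are in hand, both assertions follow from Proposition \ref{towpr} and Corollary \ref{cor} by routine bookkeeping of gradings.
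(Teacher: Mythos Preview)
Your proof is correct and follows essentially the same strategy as the paper's: verify that the symbols $P(z_i)$ and $P(\Phi_i)$ recover (nonzero scalar multiples of) the basis $\{z,e,\dots,e^{n-1},H_0,\dots,H_{n-1}\}$ of $\g^\chi$, then invoke Proposition~\ref{towpr}(a) for generation and lift algebraic independence from $S(\g^\chi)$ via the Kazhdan filtration. Two small points of phrasing: $\C[z,e,\dots,e^{n-1}]$ is $S((\g^\chi)_{\bar 0})$, not the full even part of $S(\g^\chi)$ (which also contains products $H_iH_j$); and the top Dynkin-weight component of $p_D(\hbox{Gr}_K z_0,\dots)$ only picks out those monomials of $p_D$ maximizing $\sum a_i\cdot\operatorname{wt}P(z_i)$, so one needs an inner induction on Dynkin weight inside each Kazhdan degree---but your reference to iteration already accommodates this.
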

\begin{proof} By Lemma \ref{specialgen}, we have $P(\Phi_i)=H_i$ for
$i\leq n-1$,
$P(z_i)=e^i$ for even $0<i\leq n-1$ and $P(z_0)=z$. By Lemma \ref{claim2},
$P(z_i)=e^i$ for odd $i\leq n-1$. Therefore the second assertion follows from Proposition \ref{towpr}. The algebraic independence of
$z_0,\dots,z_{n-1}$ follows from algebraic independence of the corresponding elements in $S(\g^\chi)$.
\end{proof}

\begin{conjecture} Let $\g$ be a basic classical Lie superalgebra and
$\chi$ is regular. Then it is possible to find a set of generators of
$W_{\chi}$ such that even generators commute, and the commutators of odd generators
are in $\pi(Z(\g))$.
\end{conjecture}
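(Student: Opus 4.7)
The plan is to mimic the proof of Theorem~\ref{Gen}, Proposition~\ref{evencommute}, and Lemma~\ref{specialgen} in the basic classical setting, using the injective Harish-Chandra homomorphism $\vartheta:W_\chi\to\bar W_\chi^\s$ of Theorem~\ref{HCinjective} as the reduction tool. By Corollary~\ref{specialparabolic}, in type II one has $[\s,\s]\cong\mathfrak{sl}(1|1)^{\oplus r}\oplus\mathfrak{a}$ with $\mathfrak{a}\in\{\mathfrak{osp}(1|2),\mathfrak{sl}(1|2)\}$, and the $W$-algebras of these small factors are described explicitly in Lemmas~\ref{osp} and~\ref{sk}. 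In type I one can fix an even good $\Z$-grading so that $\s=\h$ and $\bar W_\chi^\s=U(\h)$. In every case the structure of $\bar W_\chi^\s$ already displays the desired split: a commuting even subalgebra generated by central (super-)Casimirs, together with odd generators whose pairwise commutators land in that center. The goal is to lift this picture back through $\vartheta$.

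Concretely, I would fix a homogeneous basis $y_1,\dots,y_p$ of $\g^\chi$ in the good $\Z$-grading and lift it to $W_\chi$ via Proposition~\ref{towpr}(a). For the odd part, proceed as in Lemma~\ref{specialgen}: fix one distinguished $\Phi_0\in W_\chi$ whose symbol $P(\Phi_0)$ spans a highest-weight line in $\g^\chi_{\bar 1}$, then set $\Phi_k=\tfrac12\operatorname{ad}(\Omega)^k(\Phi_0)$ for a fixed even lift $\Omega\in W_\chi$ with $P(\Omega)=e$ the principal nilpotent. A degree count guarantees that $P(\Phi_k)$ sweeps out a basis of $\g^\chi_{\bar 1}$. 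For the even part, central super-Casimirs in $\pi(Z(\g))$ supply a first batch of pairwise commuting generators; the remaining ones are to be taken as $\tfrac12[\Phi_i,\Phi_j]$, which by an analogue of Lemma~\ref{specialgen}(c) will be forced into $\pi(Z(\g))$ and hence commute with everything. Proposition~\ref{towpr}(a) then guarantees that these even and odd elements together generate $W_\chi$.

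The main obstacle is proving the analogue of Lemma~\ref{specialgen}(c), namely $[\Phi_i,\Phi_j]\in\pi(Z(\g))$. The strategy parallels the $Q(n)$ argument: antisymmetry of $\operatorname{ad}\vartheta(\Omega)$ combined with injectivity of $\vartheta$ yields $[\Phi_p,\Phi_q]=(-1)^{r-p}[\Phi_r,\Phi_s]$ whenever $p+q=r+s$, so each $[\Phi_i,\Phi_j]$ lies in the even centralizer $\mathcal A^0$ of the odd part of $\mathcal A=\vartheta(W_\chi)$. The delicate step is the analogue of Lemma~\ref{a0}, identifying $\mathcal A^0$ with $\vartheta(\pi(Z(\g)))$. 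For $Q(n)$ this passed through a rank-two reduction to $Q(2)$ subalgebras together with Sergeev's Harish-Chandra-type description of $Z(Q(n))$. In the general case one would need a parallel rank-two reduction to the $\mathfrak{sl}(1|1)$, $\mathfrak{osp}(1|2)$, and $\mathfrak{sl}(1|2)$ subalgebras appearing in $[\s,\s]$, combined with the super Harish-Chandra description of $Z(\g)$. Once that identification is in hand, both the commutativity of the even generators and the centrality of the odd-generator brackets follow simultaneously, completing the proof.
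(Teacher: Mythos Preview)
The statement you are attempting to prove is a \emph{conjecture} in the paper; the authors do not supply a proof, so there is nothing to compare your argument against. What you have written is a plausible strategy, but it is not a proof, and several of the steps you flag as ``delicate'' are in fact the entire content of the conjecture.

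The most serious gap is the analogue of Lemma~\ref{a0}. In the $Q(n)$ case that lemma is proved by an explicit rank-two computation in $Q(2)$ together with Sergeev's description of $Z(Q(n))$ as supersymmetric polynomials; you assert that ``a parallel rank-two reduction'' to $\mathfrak{sl}(1|1)$, $\mathfrak{osp}(1|2)$, $\mathfrak{sl}(1|2)$ subalgebras would do the job, but you do not carry it out, and there is no reason to believe it is routine. In type~II the target $\bar W_\chi^\s$ is not $U(\h)$ but the more complicated algebra described in Lemmas~\ref{osp} and~\ref{sk}, so the identification of $\mathcal A^0$ with the image of the center requires new ideas, not just a transcription of the $Q(n)$ argument. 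Your antisymmetry step $[\Phi_p,\Phi_q]=(-1)^{r-p}[\Phi_r,\Phi_s]$ also rests on the $Q(n)$-specific fact that each $\vartheta(\Phi_k)$ is \emph{linear} in the odd generators $\xi_j$ of $U(\h)$ (this is what makes $\operatorname{ad}\vartheta(\omega)$ skew on the relevant module~$V$); you have not established an analogue of that linearity in the general setting.

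A second gap concerns the even generators. In the $Q(n)$ proof the commuting even generators $z_i$ for \emph{odd} $i$ are the Sergeev elements $\pi(e_{n,1}^{(n+i)})$, and their mutual commutativity is Proposition~\ref{evencommute}, proved via the explicit Nazarov--Sergeev Yangian relations~(\ref{equA11}),~(\ref{equA12}). Your proposal manufactures even generators only from $\pi(Z(\g))$ and from brackets $[\Phi_i,\Phi_j]$; you have not shown that these produce enough elements with the correct leading symbols $e^k\in\g^\chi_{\bar 0}$ to invoke Proposition~\ref{towpr}(a), nor have you indicated any substitute for the Yangian-relation argument that forces the remaining even generators to commute.
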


\section{{\bf Super-Yangian of $\bf Q(n)$}}

Super-Yangian $Y(Q(n))$  was studied by M. Nazarov  and A. Sergeev \cite{NS}.
Recall that
$Y(Q(n))$ is the associative unital superalgebra over $\C$ with the countable set of generators
$$T_{ij}^{(m)}\hbox{ where }m = 1, 2, \ldots \hbox{ and }i, j = \pm 1, \pm 2, \ldots, \pm n.$$

\noindent
The $\Z_2$-grading of the algebra $Y(Q(n))$ is defined as follows:
$$p(T_{ij}^{(m)})  =
p(i) + p(j), \hbox{ where } p(i) = 0 \hbox{ if } i>0, \hbox{ and } p(i) = 1  \hbox{ if }i<0.$$
To write down defining relations for these generators we employ the formal series

\noindent
in $Y(Q(n))[[u^{-1}]]$:
\begin{equation}\label{equY1}
T_{i,j}(u) = \delta_{ij}\cdot 1 + T_{i,j}^{(1)}u^{-1} + T_{i,j}^{(2)}u^{-2} + \ldots.
\end{equation}
Then for all possible indices $i, j, k, l$ we have the relations

\begin{align}\label{equY2}
& (u^2 - v^2)[T_{i,j}(u), T_{k,l}(v)]\cdot (-1)^{p(i)p(k) + p(i)p(l)  +  p(k)p(l)}\\
& = (u + v)(T_{k,j}(u)T_{i,l}(v) - T_{k,j}(v)T_{i,l}(u))\nonumber\\
&-(u - v)(T_{-k,j}(u)T_{-i,l}(v) - T_{k,-j}(v)T_{i,-l}(u))\cdot (-1)^{p(k) +p(l)},
\nonumber
\end{align}
 where $v$ is a formal parameter independent of $u$, so that (\ref{equY2}) is an equality in the algebra of formal Laurent series in $u^{-1}, v^{-1}$ with coefficients in $Y(Q(n))$.

\noindent
 For all indices $i, j$ we also have the relations
 \begin{equation}\label{equY3}
 T_{i,j}(-u) = T_{-i,-j}(u).
 \end{equation}

\noindent
Note that the relations (\ref{equY2}) and (\ref{equY3}) are equivalent to the following defining relations:

 \begin{align}\label{equY4}
 & ([T_{i,j}^{(m+1)}, T_{k,l}^{(r-1)}] - [T_{i,j}^{(m-1)}, T_{k,l}^{(r+1)}])
 \cdot (-1)^{p(i)p(k) + p(i)p(l)  + p(k)p(l)}  = \\
 & T_{k,j}^{(m)}T_{i,l}^{(r-1)} + T_{k,j}^{(m-1)}T_{i,l}^{(r)} -
 T_{k,j}^{(r-1)}T_{i,l}^{(m)} - T_{k,j}^{(r)}T_{i,l}^{(m-1)}\nonumber\\
 & + (-1)^{p(k) + p(l)}(-T_{-k,j}^{(m)}T_{-i,l}^{(r-1)} + T_{-k,j}^{(m-1)}T_{-i,l}^{(r)} +
 T_{k,-j}^{(r-1)}T_{i,-l}^{(m)} - T_{k,-j}^{(r)}T_{i,-l}^{(m-1)}),
 \nonumber
\end{align}

\begin{equation}\label{equY5}
 T_{-i,-j}^{(m)} = (-1)^m T_{i,j}^{(m)},
\end{equation}
where $m, r = 1, \ldots$ and $T_{ij}^{(0)} = \delta_{ij}$.

\begin{theorem}\label{Yan}
There exists a surjective homomorphism:
$$\varphi: Y(Q(1))\longrightarrow W_{\chi}$$
defined as follows:
$$\varphi(T_{1,1}^{(k)}) = (-1)^k\pi(e_{n,1}^{(n+k-1)}),\quad
\varphi(T_{-1,1}^{(k)}) = (-1)^k\pi(f_{n,1}^{(n+k-1)}), \hbox{ for } k= 1, 2, \ldots.$$
\end{theorem}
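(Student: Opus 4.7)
The strategy is to verify that the proposed assignment respects all defining relations of $Y(Q(1))$, then deduce surjectivity from the generation result already established. First, I would extend $\varphi$ to all generators of $Y(Q(1))$. Since the only independent generators modulo relation (\ref{equY5}) are $T_{1,1}^{(k)}$ and $T_{-1,1}^{(k)}$, I set
\begin{align*}
\varphi(T_{-1,-1}^{(k)})&=(-1)^k\varphi(T_{1,1}^{(k)})=\pi(e_{n,1}^{(n+k-1)}),\\
\varphi(T_{1,-1}^{(k)})&=(-1)^k\varphi(T_{-1,1}^{(k)})=\pi(f_{n,1}^{(n+k-1)}),
\end{align*}
so that (\ref{equY5}) is fulfilled automatically. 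The images lie in $W_\chi$ by Lemma \ref{inW}, so it suffices to check relation (\ref{equY4}) on the generators of $Y(Q(1))$.

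The core of the argument is that the elements $F_{n,1}^{(m)}\in U(Q(n))$ defined in (\ref{equG3}) satisfy, by the Nazarov--Sergeev formulas used in the proof of Proposition \ref{evencommute}, a system of quadratic identities of exactly the same shape as the super-Yangian relations (\ref{equY4}), but with additional ``tail'' terms involving $F_{n,1}^{(s)}$ for $s\leq n-1$. I would therefore apply $\pi$ to the full Nazarov--Sergeev commutator formulas for all four pairs $(i,j),(k,l)\in\{(1,1),(-1,1),(1,-1),(-1,-1)\}$ (using (\ref{equG4})--(\ref{equG6}) to translate between $F$-notation and the $e,f$-notation), shift the upper indices by $n-1$ so that $T_{i,j}^{(k)}$ corresponds to $F$-elements of degree $n+k-1$, and compare with (\ref{equY4}). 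The essential point is that by Corollary \ref{corclaim1} all terms of the form $\pi(e_{n,1}^{(s)})$ and $\pi(f_{n,1}^{(s)})$ with $s\leq n-2$ vanish, while $\pi(e_{n,1}^{(n-1)})=1$, so the ``tail'' terms collapse to constants or cancel in pairs, leaving precisely the Yangian relations. Equation (\ref{equY5}) is preserved, so the translation of the Nazarov--Sergeev relation for $[F_{-n,1}^{(m)},F_{-n,1}^{(k)}]$ need not be checked separately: it is forced by the relation for $[F_{n,1}^{(m)},F_{n,1}^{(k)}]$ and the involution $i\mapsto -i$.

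Once $\varphi$ is known to be a well-defined homomorphism, surjectivity is immediate. Indeed, by Proposition \ref{generators}, $W_\chi$ is generated by $\pi(e_{n,1}^{(m)})$ and $\pi(f_{n,1}^{(m)})$ for $m=n,\dots,2n-1$, and these are, up to sign, the images of $T_{1,1}^{(k)}$ and $T_{-1,1}^{(k)}$ for $k=1,\dots,n$. The main obstacle is the bookkeeping in the verification of (\ref{equY4}): one has to organize the sums on the right-hand sides of the Nazarov--Sergeev commutator identities (compare (\ref{equA11})--(\ref{equA12}) which already do this for the $[e,e]$ and $[f,f]$ cases used in Proposition \ref{evencommute}), and carry out the analogous decomposition for the mixed commutators $[e_{n,1}^{(\cdot)},f_{n,1}^{(\cdot)}]$, then match the four types of terms on the right-hand side of (\ref{equY4}). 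Outside of this calculation the argument is formal: extension by (\ref{equY5}), collapse of the low-index tails by Corollary \ref{corclaim1}, and invocation of Proposition \ref{generators}.
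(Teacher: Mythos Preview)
Your proposal is correct and follows essentially the same route as the paper. The paper's proof likewise reduces to checking that $\varphi$ preserves the relations (\ref{equY4}), splits into the three cases even--even, odd--odd, and even--odd, and in each case derives the required identity in $U(Q(n))$ by taking the Nazarov--Sergeev commutator formulas for $F_{n,1}^{(m)}$, translating via (\ref{equG4})--(\ref{equG6}), forming the telescoping difference $[\cdot^{(m)},\cdot^{(p)}]-[\cdot^{(m-2)},\cdot^{(p+2)}]$, and then applying $\pi$ together with Corollary \ref{corclaim1} (i.e.\ Lemma \ref{claim1}) to kill the low-index tails; surjectivity is not argued explicitly in the paper's proof but is, as you say, immediate from Proposition \ref{generators}.
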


\begin{proof}

Note that even and odd generators of $Y(Q(1))$ are $T_{1,1}^{(m)}$ and $T_{-1,1}^{(m)}$, respectively,
where $m = 1, 2, \ldots.$
We are going to check that the relations (\ref{equY4}) for generators
of $Y(Q(1))$ are preserved by $\varphi$. We separate this checking in the following three cases.

\noindent
{\it Case 1: Even generators.}
We want first to check that $\varphi$ preserves the relation

\begin{align}\label{evenyang}
& [T_{1,1}^{(m)}, T_{1,1}^{(p)}] - [T_{1,1}^{(m-2)}, T_{1,1}^{(p+2)}] = \\
& T_{1,1}^{(m-1)}T_{1,1}^{(p)} + T_{1,1}^{(m-2)}T_{1,1}^{(p+1)} -
 T_{1,1}^{(p)}T_{1,1}^{(m-1)} - T_{1,1}^{(p+1)}T_{1,1}^{(m-2)}+ \nonumber\\
& -T_{-1,1}^{(m-1)}T_{-1,1}^{(p)} + T_{-1,1}^{(m-2)}T_{-1,1}^{(p+1)} +
 (-1)^{m+p-1}T_{-1,1}^{(p)}T_{-1,1}^{(m-1)} -
 (-1)^{m+p-1}T_{-1,1}^{(p+1)}T_{-1,1}^{(m-2)}.
  \nonumber
\end{align}
 First, we will  prove the relation
\begin{align}\label{equY6}
& (-1)^{m+p}\Big([e_{n,1}^{(m+n-1)}, e_{n,1}^{(p+n-1)}] - [e_{n,1}^{(m+n-3)}, e_{n,1}^{(p+n+1)}]\Big) = \\
& (-1)^{m+p-1}\Big(e_{n,1}^{(m+n-2)}e_{n,1}^{(p+n-1)} + e_{n,1}^{(m+n-3)}e_{n,1}^{(p+n)}-
 e_{n,1}^{(p+n-1)}e_{n,1}^{(m+n-2)} - e_{n,1}^{(p+n)}e_{n,1}^{(m+n-3)}\Big) +\nonumber\\
& (-1)^{m+p-1}\Big(-f_{n,1}^{(m+n-2)}f_{n,1}^{(p+n-1)} + f_{n,1}^{(m+n-3)}f_{n,1}^{(p+n)}\Big)+ \nonumber\\
& f_{n,1}^{(p+n-1)}f_{n,1}^{(m+n-2)} - f_{n,1}^{(p+n)}f_{n,1}^{(m+n-3)}.
  \nonumber
\end{align}
 Note that
 \begin{equation}\label{equY7}
 [e_{n,1}^{(m+n-1)}, e_{n,1}^{(p+n-1)}]_e - [e_{n,1}^{(m+n-3)}, e_{n,1}^{(p+n+1)}]_e =
 \sum_{r = 1}^2[e_{n,1}^{(p+n+r-2)}, e_{n,1}^{(m+n-r-1)}],
 \end{equation}
 \begin{align}\label{equY8}
& [e_{n,1}^{(m+n-1)}, e_{n,1}^{(p+n-1)}]_f - [e_{n,1}^{(m+n-3)}, e_{n,1}^{(p+n+1)}]_f = \\
& \sum_{r = 1}^2(-1)^{r+1}((-1)^{m+p}f_{n,1}^{(p+n+r-2)}f_{n,1}^{(m+n-r-1)} +
f_{n,1}^{(m+n-r-1)}f_{n,1}^{(p+n+r-2)}).
  \nonumber
\end{align}
Multiplying the sum of equations (\ref{equY7}) and (\ref{equY8}) by $(-1)^{m+p}$, we obtain (\ref{equY6}).

By Lemma \ref{claim1}, the application of $\pi$ to (\ref{equY6}) implies
that $\varphi$ preserves the relation (\ref{evenyang}).

\noindent
{\it Case 2: Odd generators.}
Next we will check that $\varphi$ preserves the relation
\begin{align}
 & -([T_{-1,1}^{(m)}, T_{-1,1}^{(p)}] - [T_{-1,1}^{(m-2)}, T_{-1,1}^{(p+2)}]) = \nonumber\\
 & T_{-1,1}^{(m-1)}T_{-1,1}^{(p)} + T_{-1,1}^{(m-2)}T_{-1,1}^{(p+1)} -
 T_{-1,1}^{(p)}T_{-1,1}^{(m-1)} - T_{-1,1}^{(p+1)}T_{-1,1}^{(m-2)}+ \nonumber\\
 &T_{1,1}^{(m-1)}T_{1,1}^{(p)} - T_{1,1}^{(m-2)}T_{1,1}^{(p+1)} +
 (-1)^{m+p}T_{1,1}^{(p)}T_{1,1}^{(m-1)} - (-1)^{m+p}T_{1,1}^{(p+1)}T_{1,1}^{(m-2)}.
   \nonumber
\end{align}
We claim that the following relation holds
 \begin{align}\label{equY9}
& (-1)^{m+p-1}\Big([f_{n,1}^{(m+n-1)}, f_{n,1}^{(p+n-1)}] - [f_{n,1}^{(m+n-3)}, f_{n,1}^{(p+n+1)}]\Big) = \\
& (-1)^{m+p-1}\Big(f_{n,1}^{(m+n-2)}f_{n,1}^{(p+n-1)} + f_{n,1}^{(m+n-3)}f_{n,1}^{(p+n)}-
 f_{n,1}^{(p+n-1)}f_{n,1}^{(m+n-2)} - f_{n,1}^{(p+n)}f_{n,1}^{(m+n-3)}\Big) + \nonumber\\
 & (-1)^{m+p-1}\Big(e_{n,1}^{(m+n-2)}e_{n,1}^{(p+n-1)} - e_{n,1}^{(m+n-3)}e_{n,1}^{(p+n)}\Big)+ \nonumber\\
 & -e_{n,1}^{(p+n-1)}e_{n,1}^{(m+n-2)} + e_{n,1}^{(p+n)}e_{n,1}^{(m+n-3)}.
    \nonumber
\end{align}
Indeed, use
\begin{align}\label{equY10}
& [f_{n,1}^{(m+n-1)}, f_{n,1}^{(p+n-1)}]_f - [f_{n,1}^{(m+n-3)}, f_{n,1}^{(p+n+1)}]_f = \\
& -\sum_{r = 1}^2(f_{n,1}^{(p+n+r-2)}f_{n,1}^{(m+n-r-1)} - f_{n,1}^{(m+n-r-1)}f_{n,1}^{(p+n+r-2)}),
\nonumber
\end{align}
\begin{align}\label{equY11}
& [f_{n,1}^{(m+n-1)}, f_{n,1}^{(p+n-1)}]_e - [f_{n,1}^{(m+n-3)}, f_{n,1}^{(p+n+1)}]_e = \\
& \sum_{r = 1}^2(-1)^{r+1}((-1)^{m+p}e_{n,1}^{(p+n+r-2)}e_{n,1}^{(m+n-r-1)} +
e_{n,1}^{(m+n-r-1)}e_{n,1}^{(p+n+r-2)}).
\nonumber
\end{align}
Multiplying the sum of equations (\ref{equY10}) and (\ref{equY11}) by $(-1)^{m+p-1}$, we obtain (\ref{equY9}).
The end of the proof is as in the previous case.

\noindent
{\it Case 3: Even and odd generators.}
Finally, we will check that $\varphi$ preserves the relation
\begin{align}
& [T_{1,1}^{(m)}, T_{-1,1}^{(p)}] - [T_{1,1}^{(m-2)}, T_{-1,1}^{(p+2)}] = \nonumber\\
& T_{-1,1}^{(m-1)}T_{1,1}^{(p)} + T_{-1,1}^{(m-2)}T_{1,1}^{(p+1)} -
 T_{-1,1}^{(p)}T_{1,1}^{(m-1)} - T_{-1,1}^{(p+1)}T_{1,1}^{(m-2)}+ \nonumber\\
 & T_{1,1}^{(m-1)}T_{-1,1}^{(p)} - T_{1,1}^{(m-2)}T_{-1,1}^{(p+1)} +
 (-1)^{m+p}T_{1,1}^{(p)}T_{-1,1}^{(m-1)} - (-1)^{m+p}T_{1,1}^{(p+1)}T_{-1,1}^{(m-2)}.
 \nonumber
\end{align}
We claim that the following relation holds
\begin{align}\label{equY12}
& (-1)^{m+p}\Big([e_{n,1}^{(m+n-1)}, f_{n,1}^{(p+n-1)}] - [e_{n,1}^{(m+n-3)}, f_{n,1}^{(p+n+1)}]\Big) = \\
 & (-1)^{m+p-1}\Big(f_{n,1}^{(m+n-2)}e_{n,1}^{(p+n-1)} + f_{n,1}^{(m+n-3)}e_{n,1}^{(p+n)}-
 f_{n,1}^{(p+n-1)}e_{n,1}^{(m+n-2)} - f_{n,1}^{(p+n)}e_{n,1}^{(m+n-3)}\Big) + \nonumber\\
 &(-1)^{m+p-1}\Big(e_{n,1}^{(m+n-2)}f_{n,1}^{(p+n-1)} - e_{n,1}^{(m+n-3)}f_{n,1}^{(p+n)}\Big) \nonumber\\
 &-e_{n,1}^{(p+n-1)}f_{n,1}^{(m+n-2)} + e_{n,1}^{(p+n)}f_{n,1}^{(m+n-3)}.
  \nonumber
\end{align}
 According to \cite{NS}
 \begin{align}
&[F_{n,1}^{(m)}, F_{-n,1}^{(k)}] = \sum_{r = 1}^{m-1}
(F_{n,1}^{(k+r-1)}F_{-n,1}^{(m-r)} - F_{n,1}^{(m-r)}F_{-n,1}^{(k+r-1)}) + \nonumber\\
&\sum_{r = 1}^{m-1}(-1)^{r+1}(F_{-n,1}^{(k+r-1)}F_{n,1}^{(m-r)} +
(-1)^{m+k}F_{-n,1}^{(m-r)}F_{n,1}^{(k+r-1)}).
  \nonumber
\end{align}
Thus from (\ref{equG4}), (\ref{equG5}), (\ref{equG6}) we have
\begin{align}\label{equY13}
& [e_{n,1}^{(m)}, f_{n,1}^{(k)}] =
\sum_{r = 1}^{m-1}
(-1)^r((-1)^{m+k}e_{n,1}^{(k+r-1)}f_{n,1}^{(m-r)} + e_{n,1}^{(m-r)}f_{n,1}^{(k+r-1)}) + \\
&\sum_{r = 1}^{m-1}(f_{n,1}^{(k+r-1)}e_{n,1}^{(m-r)} - f_{n,1}^{(m-r)}e_{n,1}^{(k+r-1)}).
  \nonumber
\end{align}
We denote by $[e_{n,1}^{(m)}, f_{n,1}^{(k)}]_{ef}$  and by $[e_{n,1}^{(m)}, f_{n,1}^{(k)}]_{fe}$
 the first and the second sum in (\ref{equY13}), respectively, then
 $$[e_{n,1}^{(m)}, f_{n,1}^{(k)}] =
 [e_{n,1}^{(m)}, f_{n,1}^{(k)}]_{ef} + [e_{n,1}^{(m)}, f_{n,1}^{(k)}]_{fe}.$$
 Note that
 \begin{align}\label{equY14}
&  [e_{n,1}^{(m+n-1)}, f_{n,1}^{(p+n-1)}]_{ef} - [e_{n,1}^{(m+n-3)}, f_{n,1}^{(p+n+1)}]_{ef} =\\
 & \sum_{r = 1}^2((-1)^{m+p}e_{n,1}^{(p+n+r-2)}f_{n,1}^{(m+n-r-1)} + e_{n,1}^{(m+n-r-1)}f_{n,1}^{(p+n+r-2)}),
   \nonumber
\end{align}
  \begin{align}\label{equY15}
& [e_{n,1}^{(m+n-1)}, f_{n,1}^{(p+n-1)}]_{fe} - [e_{n,1}^{(m+n-3)}, f_{n,1}^{(p+n+1)}]_{fe} = \\
& \sum_{r = 1}^2(f_{n,1}^{(p+n+r-2)}e_{n,1}^{(m+n-r-1)} -
f_{n,1}^{(m+n-r-1)}e_{n,1}^{(p+n+r-2)}).
   \nonumber
\end{align}
Multiplying the sum of equations (\ref{equY14}) and (\ref{equY15})  by $(-1)^{m+p}$, we obtain (\ref{equY12}).
The proof can be finished by the same argument as in two previous cases.

\end{proof}

\font\red=cmbsy10
\def\~{\hbox{\red\char'0016}}


\vskip 0.1in

\end{document}